\newcommand{\Rold}{R}
\newcommand{\CPICo}{{\ensuremath{\mathrm{C_{PIC1}}}}}
\newcommand{\CPICt}{\ensuremath{\mathrm{C_{PIC2}}}}
\newcommand{\cone}{\ensuremath{\mathrm{C}}}
\newcommand{\be}{\begin{equation}}
\newcommand{\ee}{\end{equation}}
\newcommand{\beq}{\begin{eqnarray}}
\newcommand{\eeq}{\end{eqnarray}}
\newtheorem{claim}{Claim}[section]
\newtheorem{thm}{Theorem}[section]
\newtheorem{lma}[thm]{Lemma}
\newtheorem{prop}[thm]{Proposition}
\newtheorem{cor}[thm]{Corollary}
\newtheorem{defn}[thm]{Definition}
\theoremstyle{remark}
\newtheorem{rem}[thm]{Remark}
\numberwithin{equation}{section}
\def\be{\begin{equation}}
\def\ee{\end{equation}}
\def\bee{\begin{equation*}}
\def\eee{\end{equation*}}
\def\lf{\left}
\def\ri{\right}
\def\Ric{\text{\rm Ric}}
\def\p{\partial}
\def\rheat{\lf(\frac{\p}{\p t}-\Delta_{g(t)}\ri)}
\def\tr{\operatorname{tr}}
\def\e{\varepsilon}
\def\a{{\alpha}}
\def\b{{\beta}}
\def\Rold{{R}}
\def\C{\mathbb{C}}
\newcommand{\R}{\ensuremath{{\mathbb R}}}
\begin{document}
\title[]
%{Ricci flows from manifolds with pinched curvature}
{Manifolds with PIC1 pinched curvature}

\author{Man-Chun Lee}
\address[Man-Chun Lee]{Department of Mathematics, The Chinese University of Hong Kong, Shatin, N.T., Hong Kong
}
\email{mclee@math.cuhk.edu.hk}

\author{Peter M. Topping}
\address[Peter M. Topping]{Mathematics Institute, Zeeman Building, University of Warwick, Coventry CV4 7AL}
 \email{P.M.Topping@warwick.ac.uk}

\renewcommand{\subjclassname}{
  \textup{2020} Mathematics Subject Classification}
\subjclass[2020]{Primary 53E20, 53C20, 53C21
}

\newcommand{\idorg}{g}

%\date{\today}
\date{12 May 2025}

\begin{abstract}
%{\color{blue}
Recently it has been proved \cite{LeeTopping2022,DSS22,Lott2019} that three-dimensional complete manifolds with non-negatively pinched Ricci curvature must be flat or compact, thus confirming a conjecture of Hamilton. In this paper we generalise our work on the existence of Ricci flows from non-compact pinched three-manifolds in order to prove a higher-dimensional analogue. 
We construct a solution to Ricci flow, for all time, starting with an
arbitrary complete non-compact manifold that is PIC1 pinched. As an application
we prove that any complete manifold of non-negative complex sectional curvature that is PIC1 pinched must be flat or compact.
\end{abstract}

\keywords{}

\maketitle{}

\markboth{}{Manifolds with PIC1 pinched curvature} %\CPICo}
\section{introduction}
\label{intro_sect}

\newcommand{\scal}{\mathrm{scal}}

%{\color{blue}
In \cite{LeeTopping2022}, the following result was proved by lifting a remaining additional hypothesis of bounded curvature from the work of Deruelle-Schulze-Simon 
\cite{DSS22}, which in turn appealed to work of Lott  \cite{Lott2019}.

\begin{thm}[{Hamilton's pinching conjecture, cf. \cite[Conjecture 3.39]{CLN09}}]
\label{3D_thm}
Suppose $(M^3,g_0)$ is a complete (connected) three-dimensional Riemannian manifold with $\Ric\geq \e\, \scal\geq 0$ for some $\e>0$. Then  $(M^3,g_0)$ is either flat or  compact.
\end{thm}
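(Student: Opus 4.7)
The plan is to prove the contrapositive in the non-compact case: assuming $(M^3,g_0)$ is complete, non-compact, and pinched as stated, I would show it must be flat. The overall strategy is to construct a long-time solution to Ricci flow out of $g_0$, exploit preservation of the pinching, and then perform a blow-down analysis at infinity to arrive at a rigid model that forces $g_0$ itself to be flat.

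The first and most delicate step is to produce a smooth complete Ricci flow $(g(t))_{t\in[0,\infty)}$ with $g(0)=g_0$, \emph{without} assuming that $g_0$ has bounded curvature. Standard Shi theory does not apply, so I would construct $g(t)$ as a limit of Ricci flows defined on an exhaustion of $M$ by relatively compact domains. On each piece, an a priori curvature bound must be obtained from the pinching condition itself, via a maximum-principle/ODE--PDE argument on an appropriate invariant curvature cone. Passing to a subsequential limit using Hamilton's compactness theorem yields the desired global flow. This is essentially the content of \cite{LeeTopping2022}, and it is the main obstacle; everything else in the proof is robust once this existence theorem is in hand.

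Next I would verify that the pinching condition $\Ric\geq\e\,\scal\geq 0$ is preserved along $g(t)$. In three dimensions the curvature operator is determined by the Ricci tensor, so Hamilton's tensor maximum principle applies directly to a preserved convex cone corresponding to $\e$-pinching, reducing to an ODE check on the Lie-algebra-valued ODE governing $\Rm$ under Ricci flow. With preservation in hand, and using that non-negative Ricci with $\scal>0$ somewhere forces $\scal>0$ everywhere (by strong maximum principle) unless $g_0$ is flat, one obtains a non-trivial expanding flow.

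Finally I would do a blow-down: fix a basepoint $p\in M$, rescale $g_i(t):=\lambda_i^{-1} g(\lambda_i t)$ with $\lambda_i\to\infty$, and extract a subsequential pointed limit $(M_\infty,g_\infty(t),p_\infty)$ using the pinching-derived curvature control and Perelman-type non-collapsing (applicable since the pinching provides a positive lower bound on the asymptotic volume ratio of $g_0$ after one shows the flow is non-collapsed). A standard argument identifies $g_\infty(t)$ as a gradient expanding soliton; the pinching passes to the limit, so $g_\infty$ is a 3D gradient expander with $\Ric\geq\e\,\scal\geq 0$. By the known classification of such 3D expanders (using that pinched expanders with $\Ric\geq 0$ must be rotationally symmetric and then the soliton ODE closes only in the trivial case) the limit is the flat cone $\R^3$. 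This identifies the asymptotic volume ratio of $g_0$ with that of Euclidean $\R^3$, at which point Bishop--Gromov, together with $\Ric\geq 0$, forces $g_0$ itself to be flat, a contradiction that completes the proof. The central difficulty throughout is step one: without the Lee--Topping existence theorem, there is no Ricci flow to run this whole blow-down machine on.
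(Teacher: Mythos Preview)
The paper does not itself prove Theorem~\ref{3D_thm}; it is quoted as a result already established in \cite{LeeTopping2022,DSS22,Lott2019}, and the remainder of the paper is devoted to the higher-dimensional analogue. So there is no proof in the paper to compare your proposal against directly. That said, your outline is broadly the strategy of \cite{LeeTopping2022}, and it closely parallels the proof the present paper gives for Theorem~\ref{pinching_was_cor}: construct a long-time complete Ricci flow with $|R_{g(t)}|\leq a_0 t^{-1}$ and preserved pinching, show $\mathrm{AVR}(g_0)>0$, blow down to an expanding gradient soliton, and use pinching to force the soliton (hence $g_0$) to be flat.

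There is one step in your sketch that, as written, is either circular or unjustified. You obtain compactness for the blow-down via ``Perelman-type non-collapsing (applicable since the pinching provides a positive lower bound on the asymptotic volume ratio of $g_0$ after one shows the flow is non-collapsed)''. But you have not provided any input that makes Perelman's $\kappa$-noncollapsing applicable (no bounded curvature at $t=0$, no entropy bound), and you cannot deduce $\mathrm{AVR}>0$ from non-collapsing that you have not yet established. The route actually taken---both in \cite{LeeTopping2022} and in the proof of Theorem~\ref{pinching_was_cor} here---is different: one first shows the flow has \emph{strictly positive sectional curvature} for $t>0$, then invokes the Gromoll--Meyer injectivity radius estimate together with $|R_{g(t)}|\leq a_0 t^{-1}$ to get $\mathrm{inj}(g(t))\geq c_0\sqrt{t}$, and from this plus $\Ric\geq 0$ deduces $\mathrm{AVR}(g_0)>0$ directly by volume comparison. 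A smaller point: for the rigidity of the limiting expander the paper does not pass through any rotational-symmetry classification; it simply cites Ni's result \cite[Corollary~3.1]{Ni2005} that a Ricci-pinched expanding gradient soliton is flat.
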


In this paper we develop a higher-dimensional version of the Ricci flow existence theory we established in \cite{LeeTopping2022}, in order to prove a pinching result in general dimension.
In order to state the result, we need to understand a little about the notion of isotropic curvature.
See also \cite{PIC1_survey} for a different perspective.

Denote the space of algebraic curvature tensors on $\R^n$ by $\mathcal{C}_B(\mathbb{R}^n)$.
Given $R\in \mathcal{C}_B(\mathbb{R}^n)$, we can extend it by complex linearity to $\C^n$. 
Although we always have the symmetries of $R$ in mind, we view it as a $(0,4)$ tensor to avoid ambiguities of normalisation.
To each two-complex-dimensional subspace $\Sigma$ of $\C^n$ we can then associate a complex sectional curvature. Concretely, if $v,w\in \C^n$ give an orthonormal basis of $\Sigma$, then the complex sectional curvature associated with $\Sigma$ is
$R(v,w,\bar{v},\bar{w})$.  We say that $R\in \mathcal{C}_B(\mathbb{R}^n)$
has non-negative complex sectional curvature if all these curvatures are non-negative, whichever $\Sigma$ we choose, and we denote the cone of all such curvature tensors
by \CPICt{}.
Because this condition is $O(n)$-invariant, we can talk of a manifold having 
non-negative complex sectional curvature if this property holds for the curvature tensor at every tangent space.

We can weaken the notion of curvature positivity by restricting the sections $\Sigma$ that we consider. In particular, one can ask for non-negativity of complex sectional curvature only for  PIC1 sections, defined to be those $\Sigma$ that contain some nonzero vector $v$ 
whose conjugate $\bar v$ is orthogonal to the entire section 
$\Sigma$. 
The algebraic curvature tensors $R$ with non-negative complex sectional curvature for each such restricted $\Sigma$ form a cone we denote by \CPICo. 

Asking that a curvature tensor lies in \CPICo{} or \CPICt{} is asking that certain natural curvature averages are non-negative.

For $n\geq 4$, one can describe the cones \CPICo{} and \CPICt{} more explicitly 
as follows 
%{\color{blue} 
(cf. \cite[Propositions 7.14 and 7.18]{BrendleBook}).
The cone \CPICt{} is the cone consisting of curvature tensors 
$R$ 
satisfying 
$$R_{1313}+\lambda^2 R_{1414}+\mu^2 R_{2323}+\lambda^2\mu^2 R_{2424}-2\lambda\mu R_{1234}\geq 0$$
for all orthonormal four-frames $\{e_1,e_2,e_3,e_4\}\subset \mathbb{R}^n$ and all $\lambda,\mu\in [0,1]$. Similarly, \CPICo{} is the cone of curvature tensors
satisfying 
$$R_{1313}+\lambda^2 R_{1414}+ R_{2323}+\lambda^2 R_{2424}-2\lambda R_{1234}\geq 0$$
for all orthonormal four-frames $\{e_1,e_2,e_3,e_4\}\subset \mathbb{R}^n$ and all $\lambda\in [0,1]$.

\begin{thm}[Main theorem]
\label{main-thm-updated}
Suppose $(M^n,g_0)$ is a complete manifold of non-negative complex sectional curvature with $n\geq 3$  that is pinched in the sense that
\begin{equation}
\label{PIC1pinching}
R_{g_0}-\e_0\,\mathrm{scal}(R_{g_0})\cdot I\in \CPICo
\end{equation}
for some 
$\e_0>0$.
Then $(M,g_0)$ is either flat or compact.
\end{thm}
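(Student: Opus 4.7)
We may assume $(M^n, g_0)$ is non-compact, and aim to show that $g_0$ is flat. The strategy is to construct a long-time Ricci flow from $g_0$ via the existence theorem of this paper, exploit preservation and improvement of the PIC1 pinching along the flow, and then extract a rigid asymptotic model via blow-down.

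First I would apply Theorem~\ref{Thm:existence} to obtain a complete immortal Ricci flow $g(t)$, $t\in[0,\infty)$, with $g(0)=g_0$. Along this flow, non-negative complex sectional curvature is preserved (Nguyen; Brendle--Schoen), and the PIC1 pinching~\eqref{PIC1pinching} is preserved by Hamilton's maximum principle applied to the invariant cone $\CPICo$. The nested family of pinching cones $\check C(s)$ constructed earlier in the paper further gives an \emph{improving} pinching: at each $t>0$, the curvature $R_{g(t)}$ lies in $\check C(s(t))$ with $s(t)\to\infty$, so its traceless part is asymptotically dominated by the scalar part.

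Next I would fix a basepoint $p\in M$ and perform a parabolic blow-down $g_i(t):=\lambda_i^{-1}g(\lambda_i t)$ for a sequence $\lambda_i\to\infty$. Each $g_i$ is a Ricci flow inheriting non-negative complex sectional curvature and the improved pinching. Using curvature bounds supplied by Theorem~\ref{Thm:existence} together with Perelman $\kappa$-non-collapsing (derived from the pinching), Hamilton compactness extracts a subsequential Cheeger--Gromov limit Ricci flow $(M_\infty,g_\infty(t),p_\infty)$. The improving pinching passes to the limit and forces $R_{g_\infty(t)}$ to lie in the degenerate cone of constant-sectional-curvature tensors; since $M_\infty$ is complete, non-compact, with non-negative constant sectional curvature, we conclude $M_\infty=\R^n$ with the Euclidean metric. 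Consequently $(M,g_0)$ has maximal asymptotic volume growth $\mathrm{AVR}(g_0)=1$, and combined with non-negative Ricci the rigidity case of Bishop--Gromov (or Cheeger--Colding's volume cone to metric cone theorem) forces $(M,g_0)\cong\R^n$, so $g_0$ is flat.

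\textbf{The main obstacle} is the blow-down step: one must quantitatively track how the cones $\check C(s)$ degenerate under parabolic rescaling so that any Cheeger--Gromov limit is genuinely flat rather than merely highly pinched (for example an expanding gradient soliton with PIC1 pinching). In parallel one must establish curvature and non-collapsing estimates uniform in $\lambda_i$, both of which are delicate in the non-compact setting with possibly unbounded or only slowly decaying initial curvature, even with the existence theorem in hand.
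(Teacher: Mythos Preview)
Your outline has the right global shape (construct immortal flow, blow down, extract a rigid limit, conclude $\mathrm{AVR}=1$ and hence flat), but two of the key steps are genuine gaps.

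First, the assertion that ``the curvature $R_{g(t)}$ lies in $\check C(s(t))$ with $s(t)\to\infty$'' is not established in the paper and is not what the improving-cone machinery gives. Lemma~\ref{improving-cone} is a \emph{local} pinching improvement used to prove the a priori curvature estimate of Proposition~\ref{cur-estimate}; it does not yield global monotone improvement of the pinching along the forward flow. The argument you have in mind (pushing $s\to\infty$ so the limit has constant sectional curvature) is what one does for \emph{ancient} solutions with bounded curvature, as in Brendle--Huisken--Sinestrari and Yokota, and indeed the paper uses exactly that inside the point-picking proof of Proposition~\ref{cur-estimate}. But the immortal flow $g(t)$ itself is not ancient, and there is no reason to expect its blow-down limit to have constant sectional curvature; a priori it could be an expanding gradient soliton with merely pinched Ricci. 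You flag this in your ``obstacle'' paragraph, but you do not resolve it.

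Second, your appeal to ``Perelman $\kappa$-non-collapsing (derived from the pinching)'' is unjustified: on a non-compact manifold with potentially unbounded initial curvature there is no direct route from PIC1 pinching to non-collapsing. The paper instead invests significant work in a separate Claim to show that $K(g(t))>0$ for all $t>0$, using the soul theorem, the de Rham decomposition, Berger's holonomy classification, and the strong maximum principle of Brendle--Schoen. This strict positivity, together with the curvature decay $|R_{g(t)}|\le a_0 t^{-1}$, gives an injectivity radius lower bound via Gromoll--Meyer, and from that one obtains $\mathrm{AVR}(g_0)>0$. The paper then uses Brendle's Harnack inequality (available because $R_{g(t)}\in\CPICt$) and the Schulze--Simon argument to show the blow-down is an \emph{expanding gradient Ricci soliton} with the same $\mathrm{AVR}$; finally, Ni's rigidity theorem for Ricci-pinched expanders forces this soliton to be flat, yielding $\mathrm{AVR}(g_0)=1$ and hence flatness. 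Both the $K>0$ step and the identification of the limit as an expanding soliton (rather than a space form) are essential and absent from your plan.
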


To clarify, we denote by $R_{g}$ the curvature tensor of a Riemannian metric $g$, 
with \eqref{PIC1pinching}
holding in every tangent space.
Given 
$R\in \mathcal{C}_B(\mathbb{R}^n)$, we  write 
$\Ric(R)_{ik}:=R_{ijkj}$ and $\scal(R):=\Ric(R)_{ii}$ in order to match the usual notions of Ricci and scalar curvature exactly.
Meanwhile, $I$ is the curvature tensor defined by
\begin{equation}
\label{I_def}
I_{ijkl}=\delta_{ik}\delta_{jl}-\delta_{il}\delta_{jk}.
\end{equation}
Thus on the unit round sphere $S^n$ we would have $R_g=I$.

Explicitly, our pinching condition \eqref{PIC1pinching} is saying 
exactly that when we compute the complex sectional curvature corresponding to a PIC1 section then not only should it be non-negative, it should also be bounded below by $\e_0\,\scal(R_{g_0})$.

In arbitrary dimension, every $R\in\CPICo$ automatically has non-negative Ricci curvature, 
and for $n=3$ the conditions are equivalent (see, for example, \cite[Section 2]{MT2020}).
For $n=3$, the PIC1 pinching condition \eqref{PIC1pinching} considered in the theorem is equivalent to pinched Ricci curvature. 
This case is already handled by the theory we developed in 
\cite{LeeTopping2022} in that dimension, and thus we may focus here on the case that $n\geq 4$.

Our main theorem extends several earlier pinching results.
Brendle and Schoen \cite[Theorem 7.4]{BSsurvey} proved that if $(M^n,g_0)$ is assumed,
in addition to non-negative complex sectional curvature, to satisfy a stronger PIC2 pinching condition, that is, one assumes 
$$R_{g_0}-\e_0\,\mathrm{scal}(R_{g_0})\cdot I\in \CPICt,$$ 
for  some $\e_0>0$, and additionally one assumes that the sectional curvature is uniformly bounded throughout, and that the scalar curvature is strictly positive,
then $(M,g_0)$ is compact.
In turn, this generalises earlier work of Chen and Zhu \cite{ChenZhu2000}
and of Ni and Wu \cite{NiWu2007}.

The strategy we use to prove Theorem \ref{main-thm-updated} is to assume, contrary to the theorem, that our manifold is neither flat nor compact, 
and then to 
flow $(M,g_0)$ under Ricci flow for all time and to analyse its asymptotic behaviour. 
The rough idea that echoes what has been done in previous work 
starting with Chen-Zhu \cite{ChenZhu2000}
is that 
parabolic blow-downs of the Ricci flow would like to settle down to an expanding gradient soliton, but that this is incompatible with the 
(scale-invariant)
pinching condition. 
The main challenge that we address is to establish the following existence theorem for Ricci flow, starting with a manifold of possibly unbounded curvature, that mirrors our earlier theory from \cite{LeeTopping2022}.

\begin{thm}\label{Thm:existence}
For any $n\geq 4$ and $\e_0\in (0,\frac{1}{n(n-1)})$, there exist $a_0>0$ and
$\e_0'\in (0,\frac{1}{n(n-1)})$ such that 
the following holds. Suppose $(M^n,g_0)$ is a complete non-compact manifold 
such that
\begin{equation}
\label{e0_pinching_hyp}
R_{g_0}-\e_0\, \mathrm{scal}(R_{g_0}) \cdot I\in \CPICo
\end{equation}
on $M$. Then there exists a smooth complete Ricci flow solution $g(t)$ on $M\times [0,+\infty)$ such that for all $t>0$,
\begin{enumerate}
\item[(a)] $R_{g(t)}-\e_0'\, \mathrm{scal}(R_{g(t)}) \cdot I\in \CPICo$; 
\item[(b)] $|R_{g(t)}|\leq a_0 t^{-1}$.
\end{enumerate}
\end{thm}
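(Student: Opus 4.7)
My plan is to construct the flow by exhausting $M$ with a sequence of relatively compact open sets $\Omega_i \subset M$, solving an approximating Ricci flow associated with each, and then passing to a Cheeger-Gromov limit. The three principal tasks along the way are: (I) propagating the pinching condition along each approximating flow, (II) establishing the curvature decay $|R_{g(t)}| \leq a_0 t^{-1}$ uniformly in the approximation parameter, and (III) upgrading the short-time solutions to solutions on $[0,+\infty)$. For each $\Omega_i$ I would first replace $g_0$ by a suitably modified smooth metric on a small enlargement, so that Shi-type short-time existence applies, and then run Hamilton's flow; the real work is extending the existence time uniformly and controlling the geometry.

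For task (I), I would invoke the continuous family of $O(n)$-invariant, ODE-invariant cones $\tilde C(b,s)$ and $\check C(s)$ developed earlier in the paper in a B\"ohm--Wilking-style construction. These cones interpolate between $\CPICo$-pinching cones and $\CPICt$, and (together with the local ODE-PDE maximum principle in Theorem \ref{localODEPDEthm}) they permit one to propagate \eqref{e0_pinching_hyp} along a Ricci flow without requiring any global curvature bound on the initial data — the local nature of the maximum principle is what makes the approach viable in the unbounded-curvature setting. This preserves $\CPICo$-pinching along each approximating flow with a constant $\e_0'$ only slightly worse than $\e_0$, giving property (a).

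For task (II), I would argue by contradiction and point-picking. If the bound $|R_{g(t)}| \leq a_0 t^{-1}$ failed uniformly along the approximating sequence, then a standard point-selection would produce a sequence of pointed Ricci flows whose curvature is normalised to one at the marked spacetime point while $t_i |R_{g_i(t_i)}|(p_i)\to\infty$. After rescaling and passing to a limit via Hamilton's compactness theorem, (I) together with the scale-invariance of pinching would yield a complete non-flat ancient Ricci flow satisfying $R\in\CPICo$ with $\Ric \geq \e_0' \,\mathrm{scal}\cdot g$. The strong maximum principle in the cone $\tilde C(b,s)$, combined with the non-compactness and the rigidity implied by the pinching, would then force a splitting that is incompatible with positive pinched Ricci, producing flatness and contradicting the normalization. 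For task (III), once (I) and (II) hold on each $\Omega_i$, Shi's derivative estimates give uniform smoothness on compact subsets for $t>0$, and the curvature bound $a_0 t^{-1}$ precludes finite-time blowup, so the flow extends to $[0,+\infty)$. A final Cheeger-Gromov diagonal extraction as $i\to\infty$ produces the desired flow on $M\times[0,+\infty)$.

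The main obstacle is task (II): the blow-up limits may in principle inherit unbounded curvature behaviour from the initial data, so the extraction and the rigidity argument must be carried out carefully in the noncompact, higher-dimensional setting. The 3D argument of \cite{LeeTopping2022} exploited the fact that PIC1 pinching in dimension three reduces to Ricci pinching, but in higher dimensions we have to work intrinsically with the cones $\tilde C(b,s)$ and exploit their algebraic structure to run the strong maximum principle and obtain the flatness contradiction. Getting this rigidity step to work uniformly in the approximation — and in a way compatible with the local ODE-PDE maximum principle used in (I) — is where the bulk of the technical effort would lie.
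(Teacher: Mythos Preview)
Your overall architecture matches the paper: convert the $\varepsilon_0$-pinching into membership of one of the invariant cones $\tilde C(b,s_0)$, build approximating flows on larger and larger balls, propagate the cone condition via the local ODE--PDE theorem, obtain a scale-invariant curvature bound $|R|\le a_0 t^{-1}$, and extract a limit. However, two steps in your task~(II) would not go through as written.

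First, you invoke Hamilton's compactness theorem to extract the blow-up limit. This requires a uniform lower bound on the injectivity radius at the marked points, and there is none available here: the hypotheses include no non-collapsing condition, and PIC1 pinching alone does not provide one. The paper confronts exactly this obstruction and circumvents it by lifting to a Euclidean ball via the exponential map of the rescaled metric and taking only a \emph{local} limit $\tilde g_\infty(0)$. Because the limit is then merely a Riemannian metric on a ball (not a complete ancient flow), your proposed contradiction via ``complete non-flat ancient solution $+$ strong maximum principle $\Rightarrow$ splitting'' is not available.

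Second, and consequently, the actual contradiction is obtained by a different mechanism. Rather than a splitting argument, the paper proves a cone-\emph{improving} lemma (Lemma~\ref{improving-cone}): for flows whose curvature lies in $\tilde C(b,s)$ (respectively $\hat C(s)$), the curvature is pushed strictly into the cone wherever it is large. Applied iteratively along the almost-ancient rescaled sequence, this forces the curvature of any local limit to lie in $\tilde C(b,s)$ for all $s$ (Claim~\ref{claim:pinching-1}), hence in $\ell_{a,b}(\CPICt)$, and then in $\hat C(s)$ for all $s$ (Claim~\ref{claim:pinching-2}), so that $\tilde g_\infty(0)$ is a space form of positive curvature. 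That is what contradicts the non-compactness of the $M_k$. This cone-improving step, and not a splitting/rigidity theorem for ancient solutions, is where the paper spends its effort.

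A smaller structural point: your task~(III) says the bound $a_0 t^{-1}$ ``precludes finite-time blowup'' and hence yields long-time existence. In the paper the long-time existence is not obtained by extending a single approximating flow indefinitely, but by an iterative scheme (Theorem~\ref{thm:local-RF}) that alternates between Hochard's local existence on a slightly shrunken ball and the curvature estimate of Proposition~\ref{cur-estimate}, producing a uniform existence time $T(n,b,s_0)$ on the unit ball. Rescaling then turns this into existence on $B_{g_0}(p,R_i)\times[0,TR_i^2]$, and the global flow on $M\times[0,\infty)$ comes from letting $R_i\to\infty$ and passing to the limit.
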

Note that the restriction $\e_0<\frac{1}{n(n-1)}$, together with the pinching hypothesis, implies that 
$\mathrm{scal}(R_{g_0})\geq 0$.

This existence theorem only requires PIC1 pinching, with no requirement for non-negative complex sectional curvature. A theory for Ricci flow starting with general open manifolds with non-negative complex sectional curvature was developed by 
Cabezas-Rivas and Wilking \cite{CabezasWilking2015}. Although we do have that hypothesis at our disposal 
in the application, we must develop a new theory in order to obtain both long-time existence and the required curvature decay, both of which require
the PIC1 pinching hypothesis. We emphasise that for the existence theory
we do not make
any non-collapsing assumption, we do not have any boundedness of curvature assumption, and we do not assume 
non-negativity of the complex sectional curvature.

\begin{rem}
\label{PICt_PICo_rmk}
Our PIC1 pinching theorem raises the question of whether the cone in which 
the curvature of $g_0$ is assumed to lie can be weakened from $\CPICt$ to $\CPICo$
in Theorem \ref{main-thm-updated}.
Our Ricci flow existence theory already works in this more general situation, but having some negative complex sectional curvature complicates the required blow-down argument in the proof of Theorem \ref{pinching_was_cor}.
\end{rem}

If we combine our main result with earlier work of B\"ohm-Wilking \cite{BohmWilking2008} and 
Brendle \cite{BrendleBook}, we rapidly obtain the following consequence.
\begin{cor}
\label{main-thm-cor}
Suppose $(M^n,g_0)$ is a complete manifold of non-negative complex sectional curvature with $n\geq 3$  that is not everywhere flat, and is pinched in the sense of 
\eqref{PIC1pinching} for some $\e_0>0$.
Then $(M,g_0)$ is diffeomorphic to a spherical space form.
\end{cor}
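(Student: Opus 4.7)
My plan is to use Theorem \ref{main-thm-updated} to reduce to the compact case, and then invoke the compact PIC1 sphere theorem of Brendle to finish. The whole argument should be a short reduction, with no new analysis.

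First I would apply Theorem \ref{main-thm-updated} directly to $(M,g_0)$: its hypotheses are satisfied verbatim, so $M$ is flat or compact. Since $M$ is not everywhere flat by assumption, it must be compact.

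Second, I would show that $R_{g_0}$ lies in the interior of $\CPICo$ at some point of $M$. Shrinking $\e_0$ if necessary I may take $\e_0<\frac{1}{n(n-1)}$, so the pinching \eqref{PIC1pinching} forces $\scal(R_{g_0})\geq 0$ pointwise. If $\scal(R_{g_0})$ vanished identically, then since $\CPICo\subseteq\{\Ric\geq 0\}$ we would have $\Ric\geq 0$ with vanishing trace, hence $\Ric\equiv 0$; combined with non-negative sectional curvature (which follows from non-negative complex sectional curvature) this would force $R_{g_0}\equiv 0$, contradicting non-flatness. So there exists a point $p\in M$ with $\scal(R_{g_0})(p)>0$, and the decomposition
\begin{equation*}
R_{g_0}(p)=\bigl(R_{g_0}(p)-\e_0\,\scal(R_{g_0})(p)\cdot I\bigr)+\e_0\,\scal(R_{g_0})(p)\cdot I
\end{equation*}
exhibits $R_{g_0}(p)$ as a sum of an element of $\CPICo$ and a positive multiple of $I\in\mathrm{int}(\CPICo)$; hence $R_{g_0}(p)\in\mathrm{int}(\CPICo)$.

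Third, I would appeal to the compact PIC1 sphere theorem. The short-time Ricci flow $g(t)$ starting from $g_0$ on compact $M$ preserves $\CPICo$ by Hamilton's maximum principle, and by Brendle's strong maximum principle for invariant cones under Ricci flow, strict PIC1 positivity propagates from the single point $p$ at time $0$ to all points of $M$ at any positive time. Thus for $t>0$ the metric $g(t)$ has uniformly strictly positive PIC1 curvature on compact $M$. Brendle's convergence theorem in \cite{BrendleBook} (extending B\"ohm-Wilking \cite{BohmWilking2008}) then guarantees that the Ricci flow, after rescaling, converges to a metric of constant positive sectional curvature on $M$, so $M$ is diffeomorphic to a spherical space form. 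The main obstacle is essentially none: the compact PIC1 sphere theorem is explicit in Brendle's book, and my sole task is the reduction to the compact strictly-PIC1 setting via Theorem \ref{main-thm-updated} together with the short verification that pinching plus non-flatness yield strict PIC1-positivity at some point of $M$.
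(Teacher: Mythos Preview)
Your proposal is correct and reaches the same conclusion via Brendle's PIC1 sphere theorem, but the middle step differs from the paper's argument.

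The paper, after using Theorem~\ref{main-thm-updated} to obtain compactness, does \emph{not} rely on a tensor strong maximum principle to propagate interior-of-$\CPICo$ from one point. Instead it invokes Lemma~\ref{converse_lemma} to place $R_{g_0}$ in the B\"ohm--Wilking cone $\ell_{a,b}(\CPICo)$, which is itself invariant under the Hamilton ODE; this cone membership is then preserved along the compact Ricci flow, and Lemma~\ref{lma:linear-algebra} converts it back to a quantitative pinching $R_{g(t)}-\delta\,\scal(R_{g(t)})\cdot I\in\CPICo$. Strict PIC1 everywhere for $t>0$ then follows from the \emph{scalar} strong maximum principle applied to $\scal$ alone.

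Your route is more elementary in that it avoids the $\ell_{a,b}$ machinery of Sections~\ref{pinching_cones_sect}--\ref{apriori_sect} entirely: you only need that $\CPICo$ is preserved, plus a cone-version strong maximum principle to spread interiority from a single point. That principle is indeed available (e.g.\ from the Bony--Hamilton framework as developed in \cite{BrendleBook}), so the argument is sound; a precise citation would strengthen it. The paper's route, by contrast, retains the full quantitative pinching along the flow and needs only the scalar strong maximum principle, at the cost of invoking the B\"ohm--Wilking cone results already assembled in the paper.
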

We will give the proof of Corollary \ref{main-thm-cor} towards the end of Section \ref{exist_sect}.

Our main existence theorem \ref{Thm:existence} fits within several active research areas. 
The techniques we develop build on  ideas introduced to construct complete Ricci flows starting with smooth manifolds with possibly unbounded curvature in other settings, necessarily on non-compact domains.  This is achieved by the construction of local Ricci flows on an exhaustion of the domain, or alternative approximations, satisfying appropriate estimates that allow one to pass to a limit to give a global solution. 

The first instance of this was the general theory of Ricci flow in two dimensions 
\cite{JEMS, GT2}.
In higher dimensions one can run the flow starting with manifolds of nonnegative complex sectional curvature \cite{CabezasWilking2015}. A common theme, following initial work of  Simon \cite{MilesCrelle3D},  is to flow starting  with a manifold that is globally non-collapsed and satisfies a lower bound on an appropriate notion of curvature.
Examples of such curvature notions include Ricci curvature in three dimensions
\cite{Hochard, ST2}, complex sectional curvature \cite{BamlerCabezasWilking2019}, the curvature associated with PIC1 \cite{yi_lai} and, in the setting of K\"ahler manifolds, a combination of Ricci and orthogonal bisectional curvatures \cite{LeeTamOB}. 
Alternative interesting hypotheses include a 
Ricci lower bound and local almost-Euclidean isoperimetric inequality 
\cite{HeFei}, 
and appropriate curvature lower bounds together with smallness of an $L^1$ Morrey-type norm of the curvature (i.e. smallness of rescaled averages of the curvature over balls of varying radius) \cite{ChanLeeGap}.
See also \cite{Chan_Lee_Huang} and \cite{Liang}.
The case of $U(n)$-invariant initial metrics on $\C^n$ was handled in 
\cite{CLT17}.

Note that many of the existence theorems above rely in an essential way on a non-collapsing assumption. Such an assumption  is not permissible in our application
and the way in which we deal with this is one of the novelties of our work.

A related endeavour to the existence theory described above is the programme to use the Ricci flow to smooth out rough initial data.
An early instance of this was Simon's work to smooth  $C^0$ metrics \cite{simonC0};
cf. later work in the K\"ahler setting \cite{ChauLee20}.
In this paper our initial data is smooth, but we can draw on techniques from some of the papers that start the flow with some form of limit space 
(following \cite{MilesCrelle3D})
in the non-compact setting;
see, for example, \cite{Hochard, ST2, yi_lai, Lee_Tam_JDG, MT2020}.
Although less related to this paper, a number of other Ricci flow papers handle rough initial data of various other types, including \cite{KochLamm, LammSimon, ChuLee,
TY3}.

\vskip 0.2cm
%
%\noindent
\emph{Acknowledgements:} 
MCL was partially supported by Hong Kong RGC grant (Early Career Scheme) of Hong Kong No. 24304222 and a direct grant of CUHK.
PMT was supported by EPSRC grant EP/T019824/1.
For the purpose of open access, the authors have applied a Creative Commons Attribution (CC BY) licence to any author accepted manuscript version arising.

\section{Pinching cones}
\label{pinching_cones_sect}

In this section, we briefly survey some general tools used throughout this paper that are largely taken from 
\cite{BrendleHuiskenSinestrari2011,BohmWilking2008}. 
For simplicity we work in dimension $n\geq 4$.

Given 
$R\in \mathcal{C}_B(\mathbb{R}^n)$,  define a new algebraic curvature tensor $Q(R)$ by
\begin{equation}
\label{Qdef}
Q(R)_{ijkl}=R_{ijpq}R_{klpq}+2R_{ipkq}R_{jplq}-2R_{iplq}R_{jpkq}
\end{equation}
and consider the `Hamilton ODE': $\frac{d}{dt}R =Q(R)$. For the significance of this ODE, and the geometry behind the quantity $Q(R)$, one can refer to \cite{BohmWilking2008}.

We are interested in cones 
$\cone\subset \mathcal{C}_B(\mathbb{R}^n)$ 
that are
invariant under the Hamilton ODE, such as
\CPICo{} and \CPICt{} \cite{BrendleSchoen2009, wilking2013}.
We will consider  cones $\cone$ with the following properties: 

\begin{enumerate}
\item[\bf(I)] $\cone$ is closed, convex, and $O(n)$-invariant;
\item[\bf(II)] $\cone$ is transversally invariant under the Hamilton ODE;
\item[\bf(III)] Every 
$R\in \cone\setminus \{0\}$ has positive scalar curvature;
\item [\bf(IV)]The curvature tensor $I$ defined in \eqref{I_def}
lies in the interior of $\cone$.
\end{enumerate}
In {\bf (II)}, transversally invariant is equivalent to saying that $Q(R)$ lies in the interior of the tangent cone $T_R\cone$ for all $R\in \cone\setminus \{0\}$. 

\medskip

In the following, we will recall two 
families of cones that satisfy {\bf (I)-(IV)}, adopting notation from \cite{BrendleBook} and \cite{BohmWilking2008} where appropriate.
We start by recalling the endomorphism $\ell_{a,b}$ on 
$\mathcal{C}_B(\mathbb{R}^n)$ 
introduced by B\"ohm and Wilking \cite{BohmWilking2008}. 
For any $a,b\in \mathbb{R}$ and $R\in  \mathcal{C}_B(\mathbb{R}^n)$, we define
\begin{equation*}
\begin{aligned}
\ell_{a,b}(\Rold) &=\Rold+b\cdot  \left(\Ric(R)-\frac{\mathrm{scal}(\Rold)}{n}g\right)\owedge g+\frac{a}n \mathrm{scal}(\Rold) \cdot  g\owedge g\\
&=\Rold+b\cdot  \Ric(R)\owedge g+\frac{a-b}n \mathrm{scal}(\Rold) \cdot  g\owedge g
\end{aligned}
\end{equation*}
where $g$ is the metric ($g_{ij}=\delta_{ij}$ on $\R^n$) and $\owedge$ denotes the Kulkarni-Nomizu product:
$$(A \owedge B)_{ijkl}=A_{ik}B_{jl}-A_{il}B_{jk}-A_{jk}B_{il} +A_{jl}B_{ik}.$$
We will occasionally use that $g\owedge g=2I$.

In \cite{BrendleSchoen2009}, the following  family of cones $\hat C(s)$ was introduced 
by Brendle and Schoen in order to prove the Differentiable Sphere Theorem, in the spirit of the work of B\"ohm and Wilking \cite{BohmWilking2008}.

\begin{defn}
Define a 
family of closed, convex, $O(n)$-invariant cones $\hat C(s)$, $s\in (0,+\infty)$, as follows:
\begin{enumerate}
\item[ (a)]
for $s\in (0,\frac12]$,
\begin{equation}
\hat C(s)=\left\{\ell_{a,s}(R): R\in  \CPICt,\; \Ric(R)\geq \frac{p}{n}\scal(R) \right\},
\end{equation}
where 
$$2a=\frac{2s+(n-2)s^2}{1+(n-2)s^2},\quad p=1-\frac{1}{1+(n-2)s^2}.$$
\item[ (b)] for $s\in (\frac12,+\infty)$,
\begin{equation}
\hat C(s)=\left\{ \ell_{s,\frac12}(R): R\in  \CPICt,\; \Ric(R)\geq \frac{p}{n}\scal(R)\right\},
\end{equation}
where 
$$ p=1-\frac{ 4}{n-2+8s}.$$
\end{enumerate}
\end{defn}

%{\color{blue}
The importance of the cones $\hat C(s)$ stems from the following 
proposition, which can be used to show that certain Ricci flows become round.
%}

\begin{prop}[Propositions 13, 14 and 15 in \cite{BrendleSchoen2009}, cf. \cite{BohmWilking2008}]
\label{properties-cone}
For every $s>0$, the cone $\hat C(s)$ satisfies the properties {\bf (I)-(IV)}.
Moreover, $\hat C(s)$ varies continuously in $s$.
\end{prop}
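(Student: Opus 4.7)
My plan is to verify each of the properties (I)--(IV) in turn, essentially following the scheme of B\"ohm--Wilking and Brendle--Schoen, and then address the continuity statement at the end. The heavy lifting, namely the transversal invariance in (II), would be handed off to the cited literature, since it is a delicate algebraic computation that was the main achievement of those papers.

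For property (I), the starting cone $\CPICt \cap \{R : \Ric(R) \geq \frac{p}{n}\scal(R)\,g\}$ is closed, convex, and $O(n)$-invariant because each defining condition is so: $\CPICt$ is cut out by a family of linear inequalities indexed by orthonormal four-frames and parameters $\lambda,\mu\in[0,1]$, and the Ricci pinching is linear in $R$. To pass to the image under $\ell_{a,b}$ one needs $\ell_{a,b}$ to be an $O(n)$-equivariant linear isomorphism of $\mathcal{C}_B(\R^n)$; this is a standard fact about $\ell_{a,b}$ for the specific parameters $a,b$ appearing in the definition (B\"ohm--Wilking), so closedness and convexity transfer from the source cone to $\hat C(s)$. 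For property (III), one observes that $\scal(\ell_{a,b}(R))$ is a positive multiple of $\scal(R)$ for these parameters, so it suffices to check that every nonzero $R$ in the source cone has $\scal(R)>0$; this follows because $R\in\CPICt$ forces $\scal(R)\geq 0$ and the Ricci pinching with $p\in(0,1)$ combined with $\scal(R)=0$ would force $\Ric(R)=0$, hence $R=0$ since $\CPICt$ lies in the cone of non-negative complex sectional curvature.

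For property (IV), note that $\ell_{a,b}^{-1}(I)$ is a positive multiple of $I$ (by the $O(n)$-equivariance of $\ell_{a,b}$ and the fact that $I$ is the unique, up to scale, parallel element of $\mathcal{C}_B(\R^n)$ invariant under the full $O(n)$-action). Then $I$ lies strictly inside $\CPICt$ with strict Ricci pinching $\Ric(I)=(n-1)g>\frac{p}{n}\scal(I)\cdot g$ for every $p<1$, so its image $I$ under $\ell_{a,b}$ lies in the interior of $\hat C(s)$. Property (II) is the heart of the matter, and this is exactly what Brendle--Schoen establish in Propositions 13--15 of \cite{BrendleSchoen2009}, building on the ODE-invariance machinery of B\"ohm--Wilking \cite{BohmWilking2008}: for $R$ on the boundary of the source cone, one computes that $\ell_{a,b}^{-1}(Q(\ell_{a,b}(R)))$, after subtracting a non-negative multiple of the defining linear form of the boundary face, lies in the interior of the tangent cone to the source, and then conjugating back by $\ell_{a,b}$ gives transversal invariance of $\hat C(s)$. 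This is the step I expect to be the main obstacle, and I would simply quote it rather than reproduce the algebra.

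Finally, for continuity of $s\mapsto \hat C(s)$, I would observe that both the scalar $a(s)$ and the pinching parameter $p(s)$ appearing in the definition are continuous functions of $s$ on $(0,\tfrac12]$ and on $(\tfrac12,+\infty)$, and they match up at $s=\tfrac12$ (one checks directly that $a(\tfrac12)=\tfrac12$ and that the two values of $p$ agree there). Since $\ell_{a(s),b(s)}$ is a linear isomorphism depending continuously on $s$ and the Ricci pinching inequality depends continuously on $p(s)$, the family of cones varies continuously in the pointed Hausdorff topology on closed convex cones in $\mathcal{C}_B(\R^n)$. This delivers the final assertion.
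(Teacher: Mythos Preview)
Your proposal is correct, and in fact more detailed than what the paper itself does: the paper treats this proposition purely as a citation to Brendle--Schoen \cite{BrendleSchoen2009} (Propositions 13--15) and B\"ohm--Wilking \cite{BohmWilking2008}, giving no proof of its own. Your sketch of (I), (III), (IV) and continuity is sound, and your decision to defer (II) to the literature matches exactly what the paper does for the whole statement.
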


%{\blue
The continuity here is with respect to the Hausdorff topology once we intersect with the closed unit ball $\{R\in\mathcal{C}_B(\mathbb{R}^n)\ :\ |R|\leq 1\}$.
As we are dealing with \emph{convex} cones, the complements of the cones also vary continuously.
%}

As $s\to +\infty$, the cone $\hat C(s)$  converges to  $\mathbb{R}_{\geq 0} I$. 
In particular, knowing that there exists $s_0>0$ such that
$R \in \hat C(s)$ for all $s\geq s_0$ is equivalent to knowing that $R$ is the curvature tensor of a space-form with non-negative curvature. 
It was shown by Brendle-Huisken-Sinestrari
\cite{BrendleHuiskenSinestrari2011} that 
compact ancient solutions of Ricci flow for $t\in (-\infty,0)$ with $R_{g(t)}\in \hat C(s_0)$ for some $s_0>0$ and all $t< 0$ (on each tangent space) must have constant sectional curvature for each $t< 0$. 
This was later generalised by Yokota  \cite{TakumiYokota-CAG} to the case of complete ancient Ricci flows with possibly unbounded curvature. 

We follow the idea of \cite{BrendleHuiskenSinestrari2011} and define for each $s>0$, $ \check{C} (s)$ to be the cone of all algebraic curvature tensors $R\in \mathcal{C}_B(\mathbb{R}^n)$ satisfying
\begin{equation*}
\begin{split}
& R_{1313}+\lambda^2 R_{1414}+ \mu^2 R_{2323}+\lambda^2\mu^2 R_{2424}\\
&\quad -2\lambda\mu R_{1234}+\frac1s(1-\lambda^2)(1-\mu^2)\cdot \mathrm{scal}(R)\geq 0
\end{split}
\end{equation*}
for all orthonormal four-frames $\{e_1,e_2,e_3,e_4\}\subset \mathbb{R}^n$ and $\lambda,\mu\in [0,1]$.  Clearly, we have $\CPICt\subset  \check C(s)\subset \CPICo$ for all $s>0$. 
See Appendix \ref{geometric_interpretation_appendix} for a geometric interpretation of 
$\check C(s)$, which also implies these inclusions.
%{\blue
The cones $\check C(s)$ are nested, getting smaller as $s$ increases,
and  their intersection is $\CPICt$. Moreover, $\check C(s)$ converges to $\CPICt$ as $s\to\infty$.
%}
Following the idea in the proof of \cite[Theorem 12]{BrendleHuiskenSinestrari2011}, 
%{\color{blue} 
in the spirit of \cite{BohmWilking2008},
we define 
%{\color{blue}
$$\tilde C(b,s):=\ell_{b}(\check C(s)),$$
where we denote  
\begin{equation}
\label{ell_b}
\ell_{b}(\cone):=\ell_{a,b}(\cone)\quad\text{ for }\quad a:=b+\frac12(n-2)b^2.
\end{equation}

\begin{prop}\label{properties-cone-2}
For any $n\geq 4$, $s>0$, $b\in (0,\Upsilon_n)$, where 
\begin{equation}
\label{Ups_def}
\textstyle \Upsilon_n:=\frac{\sqrt{2n(n-2)+4}-2}{n(n-2)},
\end{equation}
the cone $\tilde C(b,s)$ satisfies properties {\bf (I)-(IV)}.
Moreover, for each such $b$, the cones $\tilde C(b,s)$ vary continuously in $s$.
\end{prop}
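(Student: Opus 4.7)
The plan is to transfer properties (I)--(IV), together with continuity in $s$, from the base cones $\check C(s)$ to $\tilde C(b,s) = \ell_{a,b}(\check C(s))$ using the linear operator $\ell_{a,b}$, with the crucial new input being the B\"ohm--Wilking analysis of how the Hamilton quantity $Q(R)$ transforms under $\ell_{a,b}$. The more routine properties for $\check C(s)$ itself are straightforward: $\check C(s)$ is closed, convex, and $O(n)$-invariant since it is defined by a family of half-space inequalities indexed by orthonormal four-frames and $\lambda,\mu\in[0,1]$; it contains $I$ in its interior because direct substitution gives a value of at least $1$ in its defining inequality; it has positive scalar curvature on nonzero elements via the inclusion $\check C(s)\subset\CPICo$ combined with the standard fact that nonzero elements of $\CPICo$ have strictly positive scalar curvature; and it varies continuously in $s$ because the defining inequality does.

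For $b\in(0,\Upsilon_n)$ and $a=b+\tfrac12(n-2)b^2$, the operator $\ell_{a,b}$ decomposes with respect to the $O(n)$-invariant splitting $\mathcal{C}_B(\R^n)=\mathcal{W}\oplus\mathcal{E}\oplus\mathcal{S}$ (Weyl, traceless Ricci, scalar) as the identity on $\mathcal{W}$, and as multiplication by $1+(n-2)b>0$ on $\mathcal{E}$ and $1+2(n-1)a>0$ on $\mathcal{S}$ (cf.\ \cite[Section~2]{BohmWilking2008}). Hence $\ell_{a,b}$ is an $O(n)$-equivariant linear isomorphism of $\mathcal{C}_B(\R^n)$, which immediately transfers closedness, convexity, $O(n)$-invariance, and continuity in $s$ to $\tilde C(b,s)$. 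A short calculation using $\mathrm{Ric}(A\owedge g)=(n-2)A+(\tr A)g$ and $\mathrm{scal}(g\owedge g)=2n(n-1)$ yields
\[
\mathrm{scal}(\ell_{a,b}(R))=(1+2(n-1)a)\,\mathrm{scal}(R), \qquad \ell_{a,b}(I)=(1+2(n-1)a)\,I,
\]
which give (III) and (IV) for $\tilde C(b,s)$.

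The main obstacle is transversal Hamilton invariance (II). Following the B\"ohm--Wilking strategy, for $R\in\check C(s)\setminus\{0\}$ one writes
\[
\ell_{a,b}^{-1}\bigl(Q(\ell_{a,b}(R))\bigr)=Q(R)+D_{a,b}(R),
\]
where $D_{a,b}(R)$ is an explicit quadratic expression in $\mathrm{Ric}(R)\owedge\mathrm{Ric}(R)$, $\mathrm{scal}(R)\,\mathrm{Ric}(R)\owedge g$ and $\mathrm{scal}(R)^2\,g\owedge g$, with coefficients polynomial in $a,b$. The relation $a=b+\tfrac12(n-2)b^2$ is B\"ohm--Wilking's tuning that eliminates the problematic cross term; the upper bound $b<\Upsilon_n$, equivalent to $\tfrac12 n(n-2)b^2+2b<1$, is exactly what is needed to force the surviving coefficients to produce a tensor pointing strictly into $\check C(s)$ at $R$. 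Mimicking the algebraic calculation in the proof of \cite[Theorem~12]{BrendleHuiskenSinestrari2011}, adapted to $\check C(s)$ (whose additional scalar-curvature term compared with $\CPICt$ contributes only an easily controlled additional term), one verifies that $Q(R)+D_{a,b}(R)$ lies in the interior of $T_R\check C(s)$. Pushing forward via $\ell_{a,b}$ then places $Q(\ell_{a,b}(R))$ in the interior of $T_{\ell_{a,b}(R)}\tilde C(b,s)$, as required for (II). The hard part is this inward-pointing sign verification.
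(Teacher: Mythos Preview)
Your proposal is correct and follows the same route as the paper's proof, which simply cites \cite[Theorem~12 and Proposition~10]{BrendleHuiskenSinestrari2011} together with \cite[Proposition~3.2]{BohmWilking2008}; you have essentially unpacked what those references contain. The one ingredient you leave implicit but should state explicitly is that $\check C(s)$ is itself preserved by the Hamilton ODE (this is \cite[Proposition~10]{BrendleHuiskenSinestrari2011}), since that is what ensures $Q(R)\in T_R\check C(s)$ before you add the strictly inward-pointing correction $D_{a,b}(R)$.
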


\begin{proof}
This is contained in the proof of \cite[Theorem 12]{BrendleHuiskenSinestrari2011}, which in turn appeals to 
\cite[Proposition 10]{BrendleHuiskenSinestrari2011} and \cite[Proposition 3.2]{BohmWilking2008}.
For property {\bf (III)} see also Lemma \ref{lower_R_in_PIC1}.
\end{proof}

\section{A priori estimates under pinching conditions}
\label{apriori_sect}

In this section, we will establish local a priori estimates along the Ricci flow.  
But first we give a result that says that $\ell_{a,b}$ turns positive curvatures into pinched curvatures.

\begin{lma}
\label{gen_lin_alg_lem}
Suppose, for $n\geq 3$, that 
$\cone\subset\mathcal{C}_B(\mathbb{R}^n)$ is a 
convex cone  with the properties that $\Ric(R)\geq 0$ for every $R\in \cone$ and that every $R\in \mathcal{C}_B(\mathbb{R}^n)$ of non-negative curvature operator lies in $\cone$.
Suppose further that $a>b\geq 0$.
Then there exists $\delta>0$ depending on $a,b,n$ and the cone $\cone$ such that if $S\in\cone$ and 
$R=\ell_{a,b}(S)$ then
$$R-\delta \,\mathrm{scal}(R) \cdot I \in \cone. $$
\end{lma}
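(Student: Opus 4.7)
The plan is to exploit the identity $g\owedge g = 2I$ so as to rewrite
\[
\ell_{a,b}(S) \;=\; S + b\,\Ric(S)\owedge g + \frac{2(a-b)}{n}\,\mathrm{scal}(S)\cdot I.
\]
Combined with the standard trace identity $\mathrm{scal}(A\owedge g)=2(n-1)\tr(A)$, this immediately gives the scalar-curvature transformation
\[
\mathrm{scal}(R) \;=\; \bigl[1+2(n-1)a\bigr]\,\mathrm{scal}(S).
\]
Subtracting $\delta\,\mathrm{scal}(R)\cdot I$ from $R$ therefore merely shifts the coefficient of the $\mathrm{scal}(S)\cdot I$ term:
\[
R-\delta\,\mathrm{scal}(R)\cdot I \;=\; S + b\,\Ric(S)\owedge g + \mu_\delta\,\mathrm{scal}(S)\cdot I,
\]
where $\mu_\delta := \tfrac{2(a-b)}{n}-\delta[1+2(n-1)a]$. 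Since $a>b\geq 0$ forces $a>0$, choosing (for instance) $\delta := \frac{a-b}{n\bigl[1+2(n-1)a\bigr]}>0$ guarantees $\mu_\delta\geq 0$. Observe that $\delta$ depends only on $a,b,n$, which is stronger than what the statement requires.

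It then remains to verify that each of the three summands on the right lies in $\cone$. The first, $S$, is in $\cone$ by hypothesis. For the second, the hypothesis $\Ric(S)\geq 0$ combined with the elementary fact that $A\owedge g$ has non-negative curvature operator whenever $A\geq 0$ (in any orthonormal basis diagonalising $A$, the operator $A\owedge g$ is diagonal on $\Lambda^2\mathbb{R}^n$ with eigenvalues $\lambda_i+\lambda_j \geq 0$) tells us that $b\,\Ric(S)\owedge g$ has non-negative curvature operator, and so lies in $\cone$ by the second hypothesis on $\cone$. For the third summand, $\mathrm{scal}(S)=\tr\,\Ric(S)\geq 0$ and $I$ has (strictly) positive curvature operator, so $\mu_\delta\,\mathrm{scal}(S)\cdot I$ again has non-negative curvature operator and lies in $\cone$. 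Since $\cone$ is a convex cone it is closed under addition, so the sum lies in $\cone$, yielding the claim.

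I do not anticipate a real obstacle here: the lemma is essentially bookkeeping around the explicit formula for $\ell_{a,b}$, with the scalar-curvature identity doing the key work. The one genuinely substantive ingredient is the claim that $A\owedge g$ has non-negative curvature operator whenever $A\geq 0$, but this reduces to a one-line diagonalisation. Everything else is direct.
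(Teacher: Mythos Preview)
Your proof is correct and follows essentially the same route as the paper: rewrite $\ell_{a,b}(S)$ using $g\owedge g=2I$, compute $\mathrm{scal}(R)=(1+2(n-1)a)\,\mathrm{scal}(S)$, and then choose $\delta$ so that $R-\delta\,\mathrm{scal}(R)\cdot I$ decomposes as a sum of $S$, $b\,\Ric(S)\owedge g$, and a non-negative multiple of $\mathrm{scal}(S)\cdot I$, each lying in $\cone$. The only cosmetic difference is that the paper verifies $\Ric(S)\owedge g$ has non-negative curvature operator by a direct quadratic-form computation rather than your diagonalisation argument, but these are equivalent.
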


\begin{proof}
By assumption, $S\in\cone$ and hence $\Ric(S)\geq 0$.
We claim that this implies that $\Ric(S)\owedge \idorg$ has non-negative curvature operator, in which case it must be in the cone $\cone$ by assumption.
Indeed, a computation tells us that 
for a general element $A=\alpha_{ij}e_i\otimes e_j\in \Lambda^2 \R^n$, we have
$$\Ric(S)\owedge g (A,A) = 4\alpha_{ij}\alpha_{kj} \Ric(S)_{ik},$$
and the right-hand side is non-negative for each $j$ separately.

Given any symmetric bilinear form $h$ on $\R^n$, we can compute that
$$\scal(h\owedge g)=2(n-1)\tr(h),$$
and hence
$$\scal(\Ric(S)\owedge g)=2(n-1)\mathrm{scal}(S)$$
and 
$$\mathrm{scal}(I)=n(n-1).$$
Unravelling the definition of $R$ gives
$$R=S+b\;\Ric(S)\owedge \idorg+\frac{2(a-b)}n\; \mathrm{scal}(S) \cdot  I,$$
so
$$\mathrm{scal}(R)=\left(2a(n-1)+1\right)\mathrm{scal}(S).$$
Therefore for $\delta\in (0,\frac{2(a-b)}{n\left(2a(n-1)+1\right)}]$, 
we obtain
\begin{equation*}
\begin{aligned}
R-\delta  \;\mathrm{scal} & (R)  \cdot I
=S+b\;\Ric(S)\owedge \idorg+\frac{2(a-b)}n\; \mathrm{scal}(S) \cdot  I
-\delta \;\mathrm{scal}(R) \cdot I\\
& = S+b\;\Ric(S)\owedge \idorg+\left(\frac{2(a-b)}{n}-\delta\left(2a(n-1)+1\right)\right)\;\mathrm{scal}(S) \cdot  I\\
&\in C
\end{aligned}
\end{equation*}
\end{proof}

We assemble some specific instances of the previous lemma in the following lemma.

\begin{lma}\label{lma:linear-algebra}
Suppose $R=\ell_{a,b}(S)$ for some $a>  b\geq 0$ and $S\in \mathcal{C}_B(\mathbb{R}^n)$, for $n\geq 3$.
Then there exists $\delta(n,a,b)>0$ such that the following holds:
\begin{enumerate}
\item[(i)] $\Ric(R)\geq \delta \;\mathrm{scal}(R) \idorg$ if $\Ric(S)\geq 0$;
\item[(ii)] $R-\delta \;\mathrm{scal}(R) \cdot I \in \CPICo$ if   $S\in \CPICo$;
\item[(iii)]$R-\delta \;\mathrm{scal}(R) \cdot I \in \CPICt$ if  $S\in \CPICt$;
\end{enumerate}
\end{lma}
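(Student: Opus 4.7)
The plan is to obtain all three items as direct applications of Lemma \ref{gen_lin_alg_lem}, by exhibiting in each case a convex cone $\cone$ to which the hypotheses of that lemma apply, and then translating the conclusion $R-\delta\,\mathrm{scal}(R)\cdot I\in\cone$ into the statement required.

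For (ii) take $\cone=\CPICo$ and for (iii) take $\cone=\CPICt$. In both cases the cone is convex by construction, it contains every $R\in\mathcal{C}_B(\mathbb{R}^n)$ of non-negative curvature operator (a standard fact; for $\CPICt$ this is immediate from the defining inequality evaluated on any orthonormal four-frame, and $\CPICt\subset\CPICo$), and every element has $\Ric\geq 0$ (as recalled in the introduction, every element of $\CPICo$ has non-negative Ricci curvature, hence so does every element of $\CPICt$). Lemma \ref{gen_lin_alg_lem} then yields exactly $R-\delta\,\mathrm{scal}(R)\cdot I\in\cone$, which is the claim.

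For (i) take $\cone$ to be the cone of all $R\in\mathcal{C}_B(\mathbb{R}^n)$ with $\Ric(R)\geq 0$. This cone is obviously convex and contains every tensor of non-negative curvature operator, and its elements have non-negative Ricci curvature by definition. Lemma \ref{gen_lin_alg_lem} then gives a $\delta'>0$ (depending on $n,a,b$) with $R-\delta'\,\mathrm{scal}(R)\cdot I\in\cone$, i.e.
\begin{equation*}
\Ric(R)-\delta'\,\mathrm{scal}(R)\,\Ric(I)\geq 0.
\end{equation*}
A direct computation from \eqref{I_def} gives $\Ric(I)=(n-1)\,g$, so after renaming $\delta:=(n-1)\delta'$ we obtain $\Ric(R)\geq\delta\,\mathrm{scal}(R)\,g$, which is (i).

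There is no real obstacle here: the only thing to verify in each case is that the chosen cone meets the convexity, non-negative-Ricci, and NNCO-containment hypotheses of Lemma \ref{gen_lin_alg_lem}, all of which are either built into the definition of the cone or are standard properties of $\CPICo$ and $\CPICt$ already recorded earlier in the paper.
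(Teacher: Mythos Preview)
Your proof is correct and is exactly the approach the paper takes: the lemma is presented there as an assembly of specific instances of Lemma~\ref{gen_lin_alg_lem}, with (ii) and (iii) being the direct cases $\cone=\CPICo$, $\cone=\CPICt$, and (i) obtained by taking $\cone$ to be the cone of tensors with non-negative Ricci curvature and then absorbing the factor $\Ric(I)=(n-1)g$ into $\delta$.
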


It is also convenient to record a converse statement. 
\begin{lma}
\label{converse_lemma}
Suppose for $n\geq 3$ that $\cone$ is a closed convex cone in $\mathcal{C}_B(\mathbb{R}^n)$ such that
\begin{enumerate}
\item
$I$ is in the interior of the cone $\cone$, and 
\item
every $R\in\cone\setminus\{0\}$ has positive scalar curvature. 
\end{enumerate}
Then given 
$\e\in (0,\frac{1}{n(n-1)})$, for small enough $b>0$ depending on $\e$, $n$ and $\cone$ we have the following: 
Every $R\in \mathcal{C}_B(\mathbb{R}^n)$ 
with $R-\e\,\mathrm{scal}(R) \cdot I\in\cone$ satisfies 
$R\in \ell_{b}(\cone)$.
\end{lma}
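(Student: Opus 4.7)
The plan is to exploit the linearity of $\ell_{a,b}$ and its continuous dependence on $(a,b)$, together with compactness of a suitable slice. Since $\ell_{a,b}$ is linear and both the hypothesis and conclusion are homogeneous under positive scaling of $R$, it suffices to prove the inclusion for $R$ with $\mathrm{scal}(R)=1$. The case $R=0$ is trivial; otherwise, taking the scalar curvature of $R-\e\,\mathrm{scal}(R)I\in\cone$ together with $\e<\frac{1}{n(n-1)}$ forces $\mathrm{scal}(R)>0$, justifying the normalization.

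The first step is to show that the set
$$\tilde K := \{R\in\mathcal{C}_B(\mathbb{R}^n) : R - \e I \in \cone,\ \mathrm{scal}(R)=1\}$$
is compact and contained in the interior of $\cone$ with a uniform interior-ball radius. Writing $T=R-\e I$, one has $T\in\cone$ with $\mathrm{scal}(T)=1-\e n(n-1)=:c_0>0$, so $\tilde K$ is a translate of $K:=\{T\in\cone : \mathrm{scal}(T)=c_0\}$. Compactness of $K$ follows from the fact that $\mathrm{scal}$ is a continuous linear functional that is strictly positive on $\cone\setminus\{0\}$, hence bounded below by some $\alpha>0$ on the unit sphere of $\cone$, yielding $\|T\|\leq c_0/\alpha$ on $K$. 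For the interior statement, choose $\rho>0$ with $B(I,\rho)\subset\cone$; then for $R=T+\e I\in\tilde K$, the ball $B(R,\e\rho)=T+\e B(I,\rho)$ is contained in $\cone$, using that $\cone$ is a convex cone closed under addition.

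The second step is a perturbation argument. The map $\ell_{a,b}$ depends polynomially on $(a,b)$, with $\ell_{0,0}=\mathrm{id}$, so for $b$ sufficiently small (and $a=b+\frac12(n-2)b^2$) the map $\ell_{a,b}$ is invertible with $\|\ell_{a,b}^{-1}-\mathrm{id}\|$ arbitrarily small in operator norm. Using the compactness of $\tilde K$, I would choose $b$ small enough that $\|\ell_{a,b}^{-1}(R)-R\|<\e\rho$ uniformly for $R\in\tilde K$; this forces $\ell_{a,b}^{-1}(R)\in B(R,\e\rho)\subset\cone$, so that $R\in\ell_{a,b}(\cone)$. Rescaling back from the slice $\mathrm{scal}(R)=1$ then yields the full statement.

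The main obstacle—really a verification rather than a serious difficulty—is the compactness of $\tilde K$, which rests on the boundedness of $\mathrm{scal}$-level sets in $\cone$ extracted from the positive scalar curvature hypothesis (2). Once this and the uniform interior-ball estimate are in hand, the continuity-perturbation step is routine.
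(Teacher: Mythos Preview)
Your proposal is correct and follows essentially the same route as the paper's proof: normalise, use hypothesis~(2) to get compactness of the relevant slice, use hypothesis~(1) together with convexity of the cone to show that each $R$ in that slice lies in the interior of $\cone$ with a uniform radius, and then perturb $\ell_{a,b}$ away from the identity. The only cosmetic differences are that the paper normalises $|R|=1$ rather than $\mathrm{scal}(R)=1$, and that your final step via $\ell_{a,b}^{-1}$ is more explicit than the paper's appeal to ``continuity of $\ell_{a,b}(\cone)$ in $b$''.
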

Recall that $\ell_b$ is defined in \eqref{ell_b}.

\begin{proof}
Because $\e\in (0,\frac{1}{n(n-1)})$, by considering the scalar curvature of $R-\e\;\mathrm{scal}(R) \cdot I\in\cone$ and using that each element of $\cone$ has non-negative scalar curvature, we deduce that 
$\mathrm{scal}(R) \geq 0$.
Using  that every $\tilde R\in\cone\setminus\{0\}$ has \emph{positive} scalar curvature, and also that the cone is closed, we deduce that $|\tilde R|\leq c\,\mathrm{scal}(\tilde R)$ for every $\tilde R\in\cone$ and some $c>0$ depending only on $\cone$. 
This ensures that if $R$ is scaled to have $|R|=1$ then not only is the term $\e \,\mathrm{scal}(R) \cdot I$ in the cone, but so is a neighbourhood of radius $r_0>0$ depending on $\e$ and the cone, including on $c$, but not on the specific $R$ chosen with $|R|=1$.
Thus by convexity of the cone we can add an error $S$ of magnitude $|S|<r_0$
to $R$ and still have 
$$R+S=[R-\e \;\mathrm{scal}(R) \cdot I]+[S+\e \;\mathrm{scal}(R) \cdot I]\in\cone.$$
In particular, since $\ell_{b}(\cone)$ varies continuously in $b$, we conclude that for small enough $b>0$ 
we have $R\in \ell_{b}(\cone)$.
\end{proof}

The following lemma translates the pinching condition into a cone condition.

\begin{lma}\label{pinching-implies-cone}
Suppose $R\in \mathcal{C}_B(\mathbb{R}^n)$, $\e_0\in (0,\frac{1}{n(n-1)})$ and
\begin{equation}
\label{Rhyp}
R-\e_0 \; \mathrm{scal}(R)\cdot I \in \CPICo.
\end{equation}
Then $R\in \tilde C(b,s_0)$ for some $s_0(n,\e_0)>0$ and $0<b(n,\e_0)<\Upsilon_n$.
\end{lma}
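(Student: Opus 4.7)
The plan is to apply Lemma~\ref{converse_lemma} with $\cone = \check{C}(s_0)$ for a suitable $s_0 = s_0(n, \e_0) > 0$. The cone $\check{C}(s_0)$ is closed, convex, and $O(n)$-invariant; it contains $I$ in its interior (the defining inequality evaluated at $I$ reduces to $(1+\lambda^2)(1+\mu^2) + \tfrac{n(n-1)}{s_0}(1-\lambda^2)(1-\mu^2) \geq 1$, strictly positive with margin); and every nonzero element has positive scalar curvature since $\check{C}(s_0) \subset \CPICo$. Taking $\e := \e_0/2 \in (0, \tfrac{1}{n(n-1)})$, Lemma~\ref{converse_lemma} will then yield $R \in \ell_{a,b}(\check{C}(s_0)) = \tilde{C}(b, s_0)$ for some small $b = b(n, \e_0) \in (0, \Upsilon_n)$, provided we can verify
\[R - \tfrac{\e_0}{2}\,\scal(R)\cdot I \;\in\; \check{C}(s_0)\]
for some $s_0$ depending only on $n$ and $\e_0$.

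To this end, I would first extract two consequences of the hypothesis \eqref{Rhyp}. Expanding the PIC1 inequality for $R - \e_0\,\scal(R)\,I$ and using $I_{1313}=I_{1414}=I_{2323}=I_{2424}=1$, $I_{1234}=0$, one gets the pinched PIC1 estimate
\[T_R(\lambda, 1) \geq 2\e_0(1+\lambda^2)\,\scal(R),\]
where we abbreviate
\[T_R(\lambda, \mu) := R_{1313} + \lambda^2 R_{1414} + \mu^2 R_{2323} + \lambda^2\mu^2 R_{2424} - 2\lambda\mu R_{1234};\]
applying PIC1 to the relabelled frame $\{e_3, e_4, e_1, e_2\}$ yields the symmetric version $T_R(1, \mu) \geq 2\e_0(1+\mu^2)\,\scal(R)$. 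Second, since $\CPICo$ is a closed convex cone whose nonzero elements have positive scalar curvature, there is $c=c(n)$ with $|S| \leq c\,\scal(S)$ for every $S\in\CPICo$; applied to $R - \e_0\,\scal(R)\,I$ this gives $|R| \leq M\,\scal(R)$ for some $M = M(n, \e_0)$ (the degenerate case $\scal(R)=0$ forces $R = 0$, which is trivial).

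After normalising $\scal(R) = 1$, the required cone membership becomes
\[F(\lambda, \mu) := T_R(\lambda, \mu) - \tfrac{\e_0}{2}(1+\lambda^2)(1+\mu^2) + \tfrac{c_0}{s_0}(1-\lambda^2)(1-\mu^2) \geq 0, \quad c_0 := 1 - \tfrac{\e_0 n(n-1)}{2},\]
for every orthonormal four-frame and $\lambda, \mu\in[0,1]$, which I would verify by splitting in two regimes according to a small parameter $\delta = \delta(n,\e_0)$. In the boundary regime $\min(1-\lambda, 1-\mu) \leq \delta$, I would use the identity
\[T_R(\lambda, \mu) = T_R(\lambda, 1) - (1-\mu^2)(R_{2323} + \lambda^2 R_{2424}) + 2\lambda(1-\mu)R_{1234}\]
(or its swap obtained by exchanging the roles of $(\lambda,\mu)$) together with the pinched PIC1 estimate and $|R|\leq M$ to conclude $F \geq \e_0/2$ directly, without invoking the $\tfrac{c_0}{s_0}$-term. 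In the complementary regime $\min(1-\lambda, 1-\mu) \geq \delta$, the factor $(1-\lambda^2)(1-\mu^2) \geq \delta^2$ is bounded below, so $\tfrac{c_0}{s_0}(1-\lambda^2)(1-\mu^2)$ dominates the (uniformly bounded) remaining terms once $s_0$ is chosen small enough in terms of $M, \delta, \e_0, n$. The main obstacle is arranging for a single $s_0 = s_0(n, \e_0)$ to work for both regimes simultaneously, but this is essentially automatic since $M$ is a universal constant under our pinching hypothesis: setting $s_0$ to the minimum of the two thresholds and invoking Lemma~\ref{converse_lemma} completes the argument.
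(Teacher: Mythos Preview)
Your argument is correct and follows the same overall architecture as the paper's proof: verify that $R-\tfrac{\e_0}{2}\,\scal(R)\cdot I\in\check C(s_0)$ for some $s_0(n,\e_0)>0$, and then invoke Lemma~\ref{converse_lemma} with $\cone=\check C(s_0)$ to land in $\tilde C(b,s_0)$. The difference lies entirely in how you establish the $\check C(s_0)$ membership. The paper appeals to Corollary~\ref{PIC1_to_checkC} from the appendix, which in turn rests on the geometric interpretation of $\check C(s)$ via the ``PIC1 angle'' $\alpha(\Sigma)$ and the quantitative bound $K^\C(\Sigma)\geq -C(n)\alpha(\Sigma)\scal(R)$ of Lemma~\ref{Kalpha_lower}; Young's inequality then converts the linear $\alpha$ loss into a quadratic one absorbable by the $\tfrac{1}{s}\alpha^2\scal(R)$ term. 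Your version bypasses that appendix entirely, working directly with the $(\lambda,\mu)$-parametrisation: the boundary regime $\min(1-\lambda,1-\mu)\leq\delta$ corresponds to $\alpha$ small, where the pinched PIC1 inequality $T_R(\lambda,1)\geq 2\e_0(1+\lambda^2)\scal(R)$ supplies the needed positivity, while the interior regime corresponds to $\alpha$ bounded away from zero, where the $\tfrac{1}{s_0}$ term dominates. Conceptually these are the same mechanism, but your treatment is more elementary and self-contained (no complex sectional curvature language required), at the cost of being less transparent about \emph{why} the cone $\check C(s)$ is the right intermediate object.
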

Note that by tracing the hypothesis \eqref{Rhyp} and using that $\e_0\in (0,\frac{1}{n(n-1)})$ we see that
$\mathrm{scal}(R)\geq 0$ automatically in the lemma.

\begin{proof}
We apply Corollary \ref{PIC1_to_checkC} in the appendix to 
$S:=R-\e_0 \; \mathrm{scal}(R)\cdot I \in \CPICo$ in place of $R$, and with
$\e=\frac{\e_0}{2}$. 
We deduce that there exists $s_0>0$ depending only on $n$ and $\e_0$ such that 
$$S+\frac{\e_0}{2}\,\scal(S)\cdot I \in \check C(s_0).$$
Because $\mathrm{scal}(S)=(1-\e_0n(n-1))\mathrm{scal}(R)\leq \mathrm{scal}(R)$,
we can expand $S$ to give
$$R-\frac12\e_0\; \mathrm{scal}(R) \cdot I   \in \check C(s_0),$$
where we are using that $I$ lies in the convex cone $\check C(s_0)$.
In fact, $I$ lies in the \emph{interior} of the cone $\check C(s_0)$ 
(even of the smaller cone $\CPICt$)
and every $R\in \check C(s_0)\setminus\{0\}$ has positive scalar curvature. This latter fact follows because $\check C(s_0)\subset \CPICo$ and the same is true for $\CPICo$
e.g. by Lemma \ref{lower_R_in_PIC1}.
We can then invoke Lemma \ref{converse_lemma} to conclude that for some 
$b\in (0,\Upsilon_n)$ 
we have
$R\in \ell_{b}(\check C(s_0))=\tilde C(b,s_0)$.
\end{proof}

\medskip

We now prove a local version of Hamilton's celebrated ODE-PDE theorem. 

\begin{thm}[Local ODE-PDE theorem]
\label{localODEPDEthm}
Suppose for $n\geq 3$ that $\cone$ is a closed, convex, $O(n)$-invariant cone in $\mathcal{C}_B(\mathbb{R}^n)$ with the properties
\begin{enumerate} %[(a)]
\item
For some $\e>0$, if $S\in \mathcal{C}_B(\mathbb{R}^n)$ satisfies 
$|S-I|\leq\e$, then $S\in \cone$.
\item
$\cone$ is invariant under the Hamilton ODE.
\end{enumerate}
Suppose further 
that $(M^n,g(t))$, $t\in [0,T]$, is a smooth solution to the Ricci flow with $g(0)=g_0$ such that for some $x_0\in M$, we have 
\begin{enumerate}
\item[(i)] $B_{g_0}(x_0,1)\Subset M$;
\item[(ii)] $R_{g_0}  \in \cone$ on $B_{g_0}(x_0,1)$;
\item[(iii)] $|R_{g(t)}|\leq c_0 t^{-1}$ on $B_{g_0}(x_0,1)\times (0,T]$ for some $c_0<\infty$.
\end{enumerate}
Then for every $l\geq 0$, there exists
$S_0(n,c_0,\e,l)>0$ such that for all $t\in [0,T\wedge S_0]$, we have
$$R_{g(t)}(x_0)+ t^l\cdot I \in \cone.$$
\end{thm}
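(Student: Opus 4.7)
The plan is to introduce a scalar ``defect function'' measuring how far the curvature is from lying in $\cone$, establish a barrier-sense parabolic inequality for it, and then apply a localised maximum-principle argument in which hypothesis (iii) controls the reaction term.

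Define $\phi(x,t) := \inf\{\lambda \geq 0 : R_{g(t)}(x) + \lambda I \in \cone\}$. Since $\cone$ is a cone, hypothesis (1) implies that $\cone$ contains the closed ball of radius $\lambda\e$ around $\lambda I$ for every $\lambda > 0$. Hence the infimum is attained, $\phi$ is $\e^{-1}$-Lipschitz and convex in $R$, and we have
\begin{equation*}
0 \leq \phi(x,t) \leq |R_{g(t)}(x)|/\e \leq c_0/(\e t)
\end{equation*}
on $B_{g_0}(x_0, 1) \times (0, T]$. By hypothesis (ii), $\phi(\cdot, 0) \equiv 0$ on $B_{g_0}(x_0, 1)$. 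The goal becomes $\phi(x_0, t) \leq t^l$ for $t \leq S_0$.

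Under Ricci flow $(\partial_t - \Delta) R = Q(R)$ in an Uhlenbeck frame. The convexity of $\cone$ (and hence of $\phi$) yields a favourable diffusion term via a Kato-type inequality, and the Hamilton-ODE invariance of $\cone$ ensures that at the nearest-point projection $\tilde R$ of $R$ onto $\partial \cone$ the tensor $Q(\tilde R)$ lies in the tangent cone $T_{\tilde R}\cone$. Expanding $Q(R) = Q(\tilde R) + 2Q(\tilde R, R - \tilde R) + Q(R - \tilde R)$ and noting that $|R - \tilde R|$ is controlled by $\phi$, one obtains in the barrier sense
\begin{equation*}
\Bigl(\frac{\partial}{\partial t} - \Delta\Bigr)\phi \;\leq\; C_0(n)\,|R_{g(t)}|\,\phi \;\leq\; \frac{C_0\, c_0}{t}\,\phi.
\end{equation*}
To conclude, I would localise using a spatial cutoff $\eta$ with $\eta(x_0) = 1$ and support in $B_{g_0}(x_0, 1)$, chosen after a preliminary shrinkage of $S_0$ (using distance distortion under (iii)) so that $\eta$ remains an effective cutoff for $g(t)$ with bounded $|\Delta_{g(t)}\eta|$. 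Compare $\phi$ to the barrier $A t^l$ via $\Psi := \eta\phi - At^l$ for large $A = A(n, c_0, \e, l)$. On the parabolic boundary of $B_{g_0}(x_0,1) \times [0, T \wedge S_0]$, $\Psi \leq 0$ by (ii) and the support of $\eta$. If $\Psi$ attained a positive interior maximum at $(x_*, t_*)$, the evolution inequality above combined with the reaction $-A l t_*^{l-1}$, the pointwise bound $\phi \leq c_0/(\e t)$, and smallness of $S_0$ would yield a contradiction. This gives $\phi(x_0, t) \leq A t^l$; a final shrinkage of $S_0$ absorbs the prefactor $A$.

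The main obstacle is twofold. First, $\phi$ is semiconvex and Lipschitz but not smooth; this is handled by observing that $\phi$ equals a supremum over supporting hyperplanes of $\cone$ of smooth linear functions of $R$, to each of which the maximum principle applies smoothly, and then taking the supremum. Second, the cross term $\nabla\eta \cdot \nabla\phi$ appearing in the parabolic inequality for $\Psi$ requires some control on $\nabla\phi$; this can be addressed either by replacing $\eta$ with $\eta^2$ and absorbing via Cauchy--Schwarz into $\eta^2\phi/t$ plus the barrier derivative, or by invoking Shi-type derivative estimates adapted to the bound $|R| \leq c_0/t$.
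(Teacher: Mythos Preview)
Your overall strategy---a scalar defect function, a barrier differential inequality $(\partial_t-\Delta)\phi\le C(n)|R|\phi$ obtained via the Uhlenbeck trick and ODE-invariance of $\cone$, and then a localised maximum-principle step---matches the paper. The paper uses $\varphi(x,t)=d(R_{g(t)}(x),\cone)$ rather than your $\phi$, but the two are uniformly comparable ($\e\,\phi\le\varphi\le|I|\,\phi$), so either choice is fine; note, however, that your exposition conflates the boundary point $R+\phi I\in\partial\cone$ (the one relevant to your $\phi$) with the nearest-point projection $\pi(R)$ (the one relevant to $\varphi$), and these are in general different.

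The genuine gap is in your localisation. Your comparison of $\eta\phi$ (or $\eta^2\phi$) with $At^l$ does not close. At a first contact point the reaction term contributes $(K/t)\,\eta^2\phi=KAt^{l-1}$, which already fails to be dominated by the barrier derivative $Alt^{l-1}$ unless $l>K=C(n)c_0$; taking $A$ large does nothing here. Worse, the cutoff error $C\phi$ can only be bounded via $\phi\le c_0/(\e t)$, giving a contribution of order $1/t$ that overwhelms $Alt^{l-1}$ as $t\to0$ for any fixed $A$. Separately, under only $|R_{g(t)}|\le c_0/t$ one cannot build a fixed spatial cutoff with $|\Delta_{g(t)}\eta|$ bounded independently of the particular flow, since $g(t)$ and $g_0$ are not uniformly equivalent (the integral $\int_0^\tau c_0/s\,ds$ diverges); any bound you obtain this way would make $S_0$ depend on the flow, not just on $(n,c_0,\e,l)$. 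The paper does not attempt this step directly: after establishing the barrier inequality for $\varphi$ it invokes \cite[Theorem~1.1]{LeeTam2020}, a local maximum principle tailored to subsolutions of $(\partial_t-\Delta)\varphi\le C|R|\varphi$ under the hypotheses $|R|\le c_0/t$, $\varphi(\cdot,0)=0$ and $\varphi\le c_0/t$, whose proof is considerably more delicate than a one-step cutoff comparison.
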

\noindent Here we use the notation $a\wedge b=\min\{a,b\}$ for $a,b\in\mathbb{R}$.

\medskip
In this work we will apply the local ODE-PDE theorem in the case that $l=0$ and $\cone=\tilde C(b,s_0)$ for some $s_0>0$
and $b\in (0,\Upsilon_n)$, where $\Upsilon_n$ was defined in \eqref{Ups_def},
in order to show that the pinching condition is almost preserved locally.
In that case it can be viewed as being analogous to \cite[Lemma 3.1]{LeeTopping2022} in the three-dimensional theory. For ease of reference we record the consequence we require in the following lemma.

\begin{lma}\label{preserv-cone}
Suppose for $n\geq 4$ that $(M^n,g(t))$, $t\in [0,T]$, is a smooth solution to the Ricci flow with $g(0)=g_0$ such that for some $x_0\in M$, we have 
\begin{enumerate}
\item[(i)] $B_{g_0}(x_0,1)\Subset M$;
\item[(ii)] $R_{g_0}  \in \tilde C(b,s_0)$ on $B_{g_0}(x_0,1)$ for some $s_0>0$
and $b\in (0,\Upsilon_n)$; 
\item[(iii)] $|R_{g(t)}|\leq c_0 t^{-1}$ on $B_{g_0}(x_0,1)\times (0,T]$ for some $c_0<\infty$.
\end{enumerate}
Then there exists 
$S_0(n,c_0,b,s_0)>0$ such that for all $t\in [0,T\wedge S_0]$, 
$$R_{g(t)}(x_0)+I \in \tilde C(b,s_0).$$
\end{lma}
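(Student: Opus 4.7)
The plan is to derive Lemma \ref{preserv-cone} as a direct corollary of the Local ODE-PDE theorem (Theorem \ref{localODEPDEthm}) with $\cone=\tilde C(b,s_0)$ and $l=0$, by checking that all hypotheses transfer. The geometric assumptions (i), (ii), (iii) already match between the two statements, so the real task is to verify the algebraic properties (1) and (2) of the cone.

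For property (2), Proposition \ref{properties-cone-2} tells us that $\tilde C(b,s_0)$ satisfies properties \textbf{(I)-(IV)}; in particular it is closed, convex, $O(n)$-invariant, and \emph{transversally} invariant under the Hamilton ODE, which implies ordinary invariance under $\frac{d}{dt}R = Q(R)$. For property (1), Proposition \ref{properties-cone-2} also gives property \textbf{(IV)}, namely that $I$ lies in the \emph{interior} of $\tilde C(b,s_0)$. Since the cone is closed, this interior condition translates to the existence of some $\e=\e(n,b,s_0)>0$ such that every $S\in\mathcal{C}_B(\R^n)$ with $|S-I|\leq\e$ lies in $\tilde C(b,s_0)$. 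Thus hypothesis (1) of Theorem \ref{localODEPDEthm} holds with a constant $\e$ depending only on $n$, $b$, and $s_0$.

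With these verifications in hand, I would invoke Theorem \ref{localODEPDEthm} with $l=0$: there exists $S_0=S_0(n,c_0,\e,0)=S_0(n,c_0,b,s_0)>0$ such that
\[
R_{g(t)}(x_0) + t^0\cdot I = R_{g(t)}(x_0) + I \in \tilde C(b,s_0)
\]
for every $t\in[0,T\wedge S_0]$, which is exactly the conclusion of Lemma \ref{preserv-cone}.

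Since every step is a bookkeeping application of previously established results, I do not expect any substantive obstacle; the only point that requires care is extracting the uniform radius $\e$ in hypothesis (1) of Theorem \ref{localODEPDEthm} from the interior assertion of Proposition \ref{properties-cone-2}, and confirming that $\e$ depends only on $n$, $b$, and $s_0$ (rather than on the specific Ricci flow or base point), so that the resulting $S_0$ depends only on the quantities allowed in the statement.
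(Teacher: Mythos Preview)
Your proposal is correct and matches the paper's approach exactly: the paper presents Lemma~\ref{preserv-cone} explicitly as the $l=0$, $\cone=\tilde C(b,s_0)$ case of Theorem~\ref{localODEPDEthm}, with the cone hypotheses supplied by Proposition~\ref{properties-cone-2}, and gives no separate argument beyond that. Your verification of the $\e$ from property~\textbf{(IV)} and the resulting dependence of $S_0$ on $(n,c_0,b,s_0)$ is precisely the bookkeeping the paper leaves implicit.
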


%

%%%UPDATED PROOF
\begin{proof}[{Proof of Theorem \ref{localODEPDEthm}}]
We denote by $d$ the distance $d(R,S)=|R-S|$ on $\mathcal{C}_B(\mathbb{R}^n)$,
where the norm is the standard extension of the Euclidean norm on $\R^n$. The corresponding inner product could be written $\langle R,S\rangle=R_{ijkl}S_{ijkl}$.

Define a Lipschitz function $\varphi: B_{g_0}(x_0,1)\times [0,T]\to \R$ by
$\varphi(x,t)=d(R_{g(t)}(x),\cone)$.  We first show that $\varphi$ satisfies 
\begin{equation}
\label{varphi_evol2}
\rheat \varphi \leq C(n) |R_{g(t)}|\varphi
\end{equation}
in the (strong) barrier sense whenever $\varphi>0$.  
By this we mean that for each point $(x_1,t_1)\in B_{g_0}(x_0,1)\times (0,T)$ where $\varphi(x_1,t_1)>0$, we
can find a smooth function $\underline{\varphi}(x,t)$ defined in a space-time neighbourhood of $(x_1,t_1)$ such that 
$\varphi\geq \underline{\varphi}$ where defined, 
while at $(x_1,t_1)$ we have both
$\varphi(x_1,t_1)=\underline{\varphi}(x_1,t_1)$ and 
$$\lf(\frac{\p}{\p t}-\Delta_{g(t_1)}\ri) \underline{\varphi} \leq C(n) |R_{g(t_1)}|\underline{\varphi}.$$
(This notion of barrier sense is particularly well adapted to applications of the maximum principle.)

If this is the case, then it follows from \cite[Theorem 1.1]{LeeTam2020} that 
for any $l\geq 0$, there exists $S_0>0$ depending only on $n$, $c_0$ and $l$ such that
\begin{equation}
\label{var_conc2}
\varphi(x_0,t)\leq t^{c_0+2+l} \text{ if }t \in (0,S_0]
\end{equation}
since $\varphi(\cdot,0)=0$ 
on $B_{g_0}(x_0,1)$
and 
$\varphi\leq d(R_{g(t)}(x),0)\leq c_0t^{-1}$
on $B_{g_0}(x_0,1)\times (0,T]$.  
By shrinking $S_0>0$, depending now additionally on $\e$, we may assume $t^{c_0+2}<\e$, and hence 
\begin{equation}
\label{var_conc3}
\varphi(x_0,t)\leq \e t^l\quad\text{ if }t\in (0, S_0].
\end{equation}

By definition of $\varphi$ we have
$$R_{g(t)}(x_0)-\varphi(x_0,t) \xi(R_{g(t)}(x_0))\in \cone$$
whenever $R_{g(t)}(x_0)\notin \cone$, where 
for $S\notin \cone$ we write 
\begin{equation}\label{eqn:xi}
\xi(S)=(S-\pi(S))/|S-\pi(S)|,
\end{equation}
 where $\pi$ is the projection to the cone $\cone$.
In particular,  convexity of the cone implies that  if $R_{g(t)}(x_0)\notin \cone$ then
for $t\in (0,S_0]$ we have
\begin{equation*}
\begin{aligned}
R_{g(t)}(x_0)+ t^l I &=\big[R_{g(t)}(x_0) -\varphi(x_0,t) \xi(R_{g(t)}(x_0))\big] + \big[t^l I +\varphi(x_0,t) \xi(R_{g(t)}(x_0))\big]\\
&\in \cone,
\end{aligned}
\end{equation*}
by \eqref{var_conc3}
and the definition of $\e$ in the theorem.
This will complete the proof.

In order to prove  \eqref{varphi_evol2}
at an arbitrary point $(x_1,t_1)$ where $\varphi(x_1,t_1)>0$, we 
must locally trivialise the tangent bundle, and hence the bundle in which $R_{g(t)}$ lives, near $(x_1,t_1)$ so that we can compare $R_{g(t)}(x)$ and $R_{g(t_1)}(x_1)$
for $(x,t)$ near $(x_1,t_1)$.
At the fixed time $t_1$ we do this by radial parallel transport centred at $x_1$. In the time direction we adopt the time-dependent gauge transformation often referred to as the Uhlenbeck trick
\cite{CK04}. This 
allows us to compare tensors at different points. It
makes the fibre metric (and derived cones, and $\pi$ and $\xi$) constant in a neighbourhood of 
$(x_1,t_1)$, and leads to a simpler evolution equation 
\begin{equation}
\label{Revol_eq2}
\left(\frac{\partial}{\partial t}-\Delta\right) R=Q(R)
\end{equation}
for the curvature tensor, for $Q$ as in \eqref{Qdef}.

To show the evolution equation \eqref{varphi_evol2} in the barrier sense,  we work near our point $(x_1,t_1)$ where $\varphi(x_1,t_1)>0$,  or equivalently $d(R_{g(t_1)}(x_1),\cone)>0$.  Since $\varphi(x,t)$ is in general only continuous, we construct a barrier as follows.  Let 
$$\underline{\varphi}(x,t)=\langle \xi(R_{g(t_1)}(x_1)),R_{g(t)}(x)\rangle$$
for $(x,t)$ sufficiently close to $(x_1,t_1)$ so that the bundle trivialisation above 
is valid. Then
$\varphi(x_1,t_1)=\underline{\varphi}(x_1,t_1)$
and convexity of the cone implies $\underline{\varphi}(x,t)\leq \varphi(x,t)$ for $(x,t)$ in the neighbourhood of $(x_1,t_1)$ where $\underline{\varphi}$ is defined. 
Hence, $\underline{\varphi}$ serves as a lower barrier for $\varphi$ in the strongest possible sense. 

In order to compute the evolution equation for $\underline{\varphi}$ at 
$(x_1,t_1)$, we first note that we always have $|\pi(R)|\leq |R|$, and so writing $L_0$ for the Lipschitz constant of $Q$ on the unit ball in $\mathcal{C}_B(\mathbb{R}^n)$
(which depends only on $n$)
we can compute
\begin{equation*}
\begin{aligned}
|Q(R)-Q(\pi(R))| 
&\leq \textstyle
|R|^2 \cdot 
|Q(\frac{R}{|R|})-Q(\frac{\pi(R)}{|R|})|\\
&\leq \textstyle
|R|^2 L_0 
\left|\frac{R}{|R|}-\frac{\pi(R)}{|R|}\right|\\
&= L_0|R|\cdot |R-\pi(R)|.
\end{aligned}
\end{equation*}
At $(x_1,t_1)$ we then have 
\begin{equation}
\begin{split}
\lf(\frac{\p}{\p t}-\Delta_{g(t_1)}\ri)
%\rheat 
\underline{\varphi}&=\langle \xi(R_{g(t_1)}(x_1)),Q(R_{g(t_1)}(x_1))\rangle\\
&\leq \langle \xi(R_{g(t_1)}(x_1)),Q(R_{g(t_1)}(x_1))-Q(\pi(R_{g(t_1)}(x_1)))\rangle\\
&\leq L_0 |R_{g(t_1)}| \varphi.
\end{split}
\end{equation}
Here we have used the fact that $\pi(R_{g(t_1)}(x_1))\in \partial \cone$ so that $$ \langle \xi(R_{g(t_1)}(x_1)),Q(\pi(R_{g(t_1)}(x_1)))\rangle \leq 0$$
because $\cone$ is invariant under the Hamilton ODE. 
We have also used that although $\xi(R_{g(t_1)}(x_1))$ is not parallel, its Laplacian vanishes at $(x_1,t_1)$ by construction.
This completes the proof.
\end{proof}

Next we will  show that for $R_{g(t)}$ inside the pinching cones $\hat C(s)$ or $\tilde C(b,s)$ of large magnitude, the pinching improves with time.
We use an approach of 
Brendle-Huisken-Sinestrari \cite{BrendleHuiskenSinestrari2011} developed in the global setting.

\medskip

%%%

\begin{lma}\label{improving-cone}
Suppose $\mathbf{C}$ is either 
$\hat C(s)$ or $\tilde C(b,s)$, for $s>0$ and $b\in (0,\Upsilon_n)$,
with $n\geq 4$.
Then  there exists $\hat T(n,\mathbf{C})>0$ 
such that the following holds: 
Suppose $(M,g(t)),t\in [0,T]$ is a smooth solution to the Ricci flow such that for some $x_0\in M$ and $r_0\in (0,1)$, we have 
\begin{enumerate}
\item [(i)] $B_{g(t)}(x_0,8(n-1)r_0^{-1})\Subset M$;
\item[(ii)] $R_{g(t)}+I \in \mathbf{C}$ on $B_{g(t)}(x_0,8(n-1)r_0^{-1})$;
\item[(iii)] $\Ric_{g(t)}\leq  (n-1)r_0^{-2}$ on $B_{g(t)}(x_0,r_0)$,
\end{enumerate}
for all $t\in [0,T]$. 
 Then for all $t\in [0,T\wedge \hat T]$,
$$R_{g(t)}(x_0)+\left(4-t\cdot \mathrm{scal}(R_{g(t)}(x_0)) \right)I \in \mathbf{C}.$$
\end{lma}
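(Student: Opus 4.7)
The plan is to apply the parabolic maximum principle to the tensor field $\tilde R(x,t):=R_{g(t)}(x)+\varphi(x,t)\,I$ on a spacetime neighbourhood of $(x_0,0)$, where $\varphi$ is a scalar function designed so that $\varphi(x_0,t)=4-t\,\scal(R_{g(t)}(x_0))$ while being large on the parabolic boundary, thereby trapping $\tilde R$ inside $\mathbf{C}$ there. The essential algebraic input is a quantitative consequence of the transverse invariance of $\mathbf{C}$ (property \textbf{(II)}): because $Q$ is $2$-homogeneous, $\partial\mathbf{C}\cap\{|S|=1\}$ is compact, and $\scal$ is strictly positive on $\mathbf{C}\setminus\{0\}$ by property \textbf{(III)}, there exists $\delta=\delta(n,\mathbf{C})>0$ with
\[ \langle\nu,Q(S)\rangle\le -\delta\,\scal(S)^{2} \]
for every nonzero $S\in\partial\mathbf{C}$ and every outward unit normal $\nu$ to a supporting hyperplane of $\mathbf{C}$ at $S$. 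This \emph{quadratic}-in-$\scal$ inward push is exactly what will produce the $-t\,\scal$ correction in the conclusion.

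I will first reduce to $r_0=1$ by parabolic rescaling, under which all hypotheses and the conclusion are invariant. Using the Ricci upper bound on $B_{g(t)}(x_0,1)$ from (iii), I then build a Perelman-type cutoff $\eta(x,t)\in[0,1]$ with $\eta\equiv 1$ in a small neighbourhood of $x_0$, $\eta=0$ outside $B_{g(t)}(x_0,8(n-1))$, and $\rheat \eta\ge -K_n$ in the barrier sense, via the standard Hamilton/Perelman construction based on Laplacian comparison and distance-distortion estimates. I then set
\[ \varphi(x,t):=4\eta(x,t)+M\bigl(1-\eta(x,t)\bigr)-t\,\eta(x,t)^{2}\,\scal(R_{g(t)}(x)) \]
for a large dimensional constant $M=M(n)$. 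This choice enforces (a)~$\varphi(x_0,t)=4-t\,\scal(R_{g(t)}(x_0))$; (b)~$\varphi(\cdot,0)\ge 1$ on the large ball, so that $\tilde R(\cdot,0)\in\mathbf{C}$ by (ii); and (c)~on $\{\eta=0\}$, $\varphi=M$, forcing $\tilde R\in\mathbf{C}$ by convexity and property \textbf{(IV)}.

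I argue by contradiction. Suppose $(x_1,t_1)$ is a first contact point with $\tilde R(x_1,t_1)\in\partial\mathbf{C}$. In an Uhlenbeck gauge, $\rheat \tilde R=Q(R)+(\rheat \varphi)\,I$, while $\rheat \scal=2|\Ric|^{2}\le\frac{2}{n}\scal^{2}$ under Ricci flow. Pairing with the outward normal $\nu$ at the contact, the parabolic maximum principle requires $\langle\nu,\rheat \tilde R\rangle\le 0$, i.e.
\[ \langle\nu,Q(R)\rangle+(\rheat \varphi)\,\langle\nu,I\rangle\le 0. \]
Transverse invariance gives $\langle\nu,Q(\tilde R)\rangle\le-\delta\,\scal(\tilde R)^{2}$, while $Q(R)-Q(\tilde R)=O(|\varphi|\,|\tilde R|)$ contributes only $O(\scal(\tilde R))$. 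The coefficient $\langle\nu,I\rangle$ is strictly negative (since $I$ is in the interior of $T_{\tilde R}\mathbf{C}$), with magnitude bounded below by a constant depending only on $\mathbf{C}$; meanwhile $\rheat \varphi$ satisfies $\rheat \varphi\ge -C\scal(\tilde R)-\frac{Ct}{n}\scal(\tilde R)^{2}$, with the quadratic term coming from $-2t\eta^{2}|\Ric|^{2}$ via $|\Ric|^{2}\le\scal^{2}/n$ and the linear terms from $-\eta^{2}\scal$ and the cutoff-gradient cross-terms. Choosing $\hat T(n,\mathbf{C})$ small enough that $\frac{Ct}{n}<\frac{\delta}{2}$ for $t\le\hat T$, the net coefficient of $\scal(\tilde R)^{2}$ in $\langle\nu,\rheat \tilde R\rangle$ is at most $-\delta/2<0$, so the quadratic inward push dominates all linear-in-$\scal$ errors for $\scal(\tilde R)$ above a dimensional threshold. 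The small-$\scal(\tilde R)$ case is handled by the cushion at $x_0$: hypothesis (ii) forces $\scal(\tilde R)\ge (\varphi-1)n(n-1)$, so small $\scal(\tilde R)$ implies $\varphi$ is small, which places $\tilde R$ deep in the interior of $\mathbf{C}$, precluding contact.

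The main obstacle is the careful bookkeeping at the contact point, particularly the interplay between the quadratic-in-$\scal$ inward push from $Q(\tilde R)$ and the quadratic-in-$\scal$ contribution from $-2t\eta^{2}|\Ric|^{2}$ in $\rheat \varphi$, which enters the balance with the \emph{wrong} sign via $\langle\nu,I\rangle<0$. It is essential here that the latter carries a prefactor $t$, so that the former wins for $t\le \hat T$ with $\hat T$ depending only on $n$ and $\mathbf{C}$ through $\delta$ and the constants controlling $\langle\nu,I\rangle$ on $\partial\mathbf{C}\cap\{|S|=1\}$. Linear-in-$\scal$ cross-terms from $\nabla\eta$, $\nabla\scal$, and the perturbation $Q(R)-Q(\tilde R)$ are absorbed by the spectral gap $\delta-2Ct/n$ via Cauchy--Schwarz once $\scal(\tilde R)$ is large, and by the initial cushion otherwise. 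No dependence on $r_0$ survives the rescaling, matching the lemma's claim $\hat T=\hat T(n,\mathbf{C})$.
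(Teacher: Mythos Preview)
Your approach shares the correct core idea with the paper --- exploit the quadratic-in-$\scal$ inward push coming from transverse invariance --- but the implementation has a genuine gap.

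In your ansatz the cutoff is built \emph{inside} the coefficient of $I$: the term $-t\eta^{2}\scal$ in $\varphi$ produces, under $\rheat$, a cross-term $4t\eta\,\nabla\eta\cdot\nabla\scal$. You dismiss this as ``linear-in-$\scal$'', but there is no a priori bound on $|\nabla\scal|$ available from the hypotheses of the lemma; the first-derivative condition $\nabla\langle\nu,\tilde R\rangle=0$ at the contact relates $\nabla\varphi$ only to $\langle\nu,\nabla R\rangle$, which is equally uncontrolled and does not isolate $\nabla\scal$. (Incidentally, $|\Ric|^{2}\ge\scal^{2}/n$, not $\le$; one must instead use $|\Ric(R)|\le C|R+I|+C$ via hypothesis~(ii).)

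The paper avoids this by separating the two ingredients. It defines
\[
S=R_{g(t)}+\bigl(2-t\,n(n-1)-t\,\scal(R_{g(t)})\bigr)\,I,
\]
and works with the scalar distance $\varphi(x,t)=d(\mathbf{C},S)$. Because $\rheat(t\,\scal)=\scal+2t|\Ric|^{2}$ exactly, the tensor $S$ has a clean evolution with no gradient terms, and Yokota's refinement of Brendle--Huisken--Sinestrari (which already packages the $-2t|\Ric|^{2}I$ contribution) yields $\rheat\varphi\le -\delta t^{-2}\varphi^{2}$. The localisation is done afterwards, multiplicatively: one studies $G=\Phi\varphi$ with a Perelman cutoff $\Phi$, and the only cross-term $-2\nabla\Phi\cdot\nabla\varphi$ is eliminated at the maximum of $G$ via $\nabla G=0$, giving $\varphi(x_0,t)\le Lt^{2}$ with $L=L(n,\mathbf{C})$.

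Two secondary points: the scaling reduction to $r_{0}=1$ is not valid. Both the ``$+I$'' in hypothesis~(ii) and the ``$4$'' in the conclusion carry units; after rescaling you would recover the conclusion only with $4r_{0}^{-2}$ in place of $4$, and only on $[0,r_{0}^{2}\hat T]$, whereas the lemma demands $\hat T$ independent of $r_{0}$. Also, for a Perelman cutoff equal to $1$ at $x_{0}$ and vanishing far away, only a lower bound on $\rheat d$ is available, which yields $\rheat\eta\le K_{n}$ (or $\le 0$ with an exponential prefactor as in the paper), not $\rheat\eta\ge -K_{n}$.
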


\medskip

\begin{proof}
For each $t\in [0,T]$ and $x\in B_{g(t)}(x_0,8(n-1)r_0^{-1})$,
we denote 
\begin{equation}
\begin{aligned}
S=S_{g(t)}(x) &:=R_{g(t)}(x)+(2-t\,n(n-1)-t\; \mathrm{scal}(R_{g(t)}(x)))\cdot I\\
&=(R_{g(t)}(x)+I)+(1-t\; \mathrm{scal}(R_{g(t)}(x)+I))\cdot I\\
& =\hat R + (1-t\; \mathrm{scal}(\hat R))\cdot I, \label{simpler_S}
\end{aligned}
\end{equation}
where we abbreviate $\hat R := R_{g(t)}(x)+I$,
and define 
$$\varphi(x,t)=d \left( \mathbf{C}, S_{g(t)}(x)\right).$$
As we shall see at the end, constraining $\varphi$ not to grow too much will imply the conclusion of the lemma.

Observe that by convexity of the cone $\mathbf{C}$, and the facts that $R_{g(0)}+I\in \mathbf{C}$ 
on $B_{g(0)}(x_0,8(n-1)r_0^{-1})$
and $I\in\mathbf{C}$, we have
$S_{g(0)}=(R_{g(0)}+I)+I\in \mathbf{C}$, and hence $\varphi(\cdot,0)\equiv 0$, throughout $B_{g(0)}(x_0,8(n-1)r_0^{-1})$.

We first show that there exist $\hat T, \delta>0$ 
depending only on $n$ and $\mathbf{C}$ such that $\varphi$ satisfies 
$$\rheat \varphi \leq -\delta t^{-2}\varphi^2$$
in the  barrier sense 
at every point $(x_1,t_1)$ for which $t_1\in (0,T\wedge \hat T]$,
$x_1\in B_{g(t_1)}(x_0,8(n-1)r_0^{-1})$ and $\varphi(x_1,t_1)>0$.
We want to construct a lower barrier for $\varphi$ at $(x_1,t_1)$ 
satisfying
the desired evolution inequality.

Borrowing ideas and notation from the proof of 
Theorem \ref{localODEPDEthm},
including the bundle trivialisation near to $(x_1,t_1)$,
we define 
$$\underline{\varphi}(x,t)=\langle \xi(S_{g(t_1)}(x_1)),S_{g(t)}(x)\rangle$$
where $\xi$ is given by \eqref{eqn:xi}. This satisfies $\underline{\varphi}(x,t)\leq \varphi(x,t)$ for all $(x,t)$ in 
the neighbourhood of $(x_1,t_1)$ where the bundle trivialisation holds, 
with equality at $(x_1,t_1)$. 

Recall the evolution equation \eqref{Revol_eq2} for $R_{g(t)}(x)$, 
and the evolution equation 
$$\rheat\scal(R_{g(t)})
=2|\Ric(R_{g(t)})|^2$$
for $\scal(R_{g(t)})$, \cite[Proposition 2.5.4]{RFnotes},
which give us
$$\left(\frac{\partial}{\partial t}-\Delta\right) S = Q(R)-\scal(R)I-2t|\Ric(R)|^2I  -n(n-1)I.$$
At $(x_1,t_1)$ we thus obtain
\begin{equation}
\label{custard}
\rheat \underline{\varphi}
=\langle \xi(S), Q(R)-\mathrm{scal}(R)I 
-2t|\Ric(R)|^2 I   -n(n-1)I  \rangle.
\end{equation}
We will estimate the right-hand side by adapting the approach of 
\cite[Lemma 5]{BrendleHuiskenSinestrari2011} and \cite[Lemma 10]{TakumiYokota-CAG}.
A useful heuristic for the proof is that by constraining $\hat T>0$ (and hence $t=t_1$) to be small, we will show that $|R|$ is large and hence that the dominant term on the right-hand side of \eqref{custard} is the quadratic term $\langle \xi(S), Q(R)\rangle$. 
This term will be comparable to $\langle \xi(S), Q(\pi(S))\rangle$ because 
we will show that $R$ and $\pi(S)$ are close, relative to their magnitude.
To control this we follow \cite{TakumiYokota-CAG}, cf. \cite{BrendleHuiskenSinestrari2011}, and use property ${\bf (IV)}$ to obtain the estimate 
\begin{equation}
\label{BHS_est}
\langle \xi(S), Q(\pi(S))\rangle \leq -3\mu |\pi(S)|^2
\end{equation}
for some $\mu>0$ depending on $n$ and the cone. The right-hand side will be negative enough to absorb all the error terms and conclude the lemma.
We now give the details.

Because $\hat{R}\in\mathbf{C}$ and $I\in \mathbf{C}$, but $S=\hat R + (1-t\; \mathrm{scal}(\hat R))\cdot I\notin\mathbf{C}$, 
we must have $t\,\scal(\hat{R})>1$ at $(x_1,t_1)$.
But $\scal(\hat{R})\leq \alpha |\hat R|$, where we will use $\alpha$ to represent a $n$-dependent constant that is allowed to increase with each use. In particular,  
$|\hat R|\geq \frac{1}{\alpha t}$.
By reducing $\hat T>0$ if necessary, this forces $|\hat R|\geq 2|I|$, which implies
that $R=\hat R-I$ has comparable magnitude in that
$$\frac12 |R|\leq |\hat R|\leq 2|R|.$$
In particular, 
\begin{equation}
\label{IR_simple}
|I|\leq |R|.
\end{equation}
We compare $R$ and $\pi(S)$ by estimating
\begin{equation}
\label{RpiS}
\begin{aligned}
|R-\pi(S)| &= |\hat R -\pi(S) -I|\\
&\leq |\hat R -\pi(S)|+|I|\\
&\leq |\hat R -S|+|I|\\
&=(t\,\scal(\hat{R})-1)|I|+|I|\\
&=t\,\scal(\hat{R})|I|.
\end{aligned}
\end{equation}
A first consequence is that $|R-\pi(S)|\leq \alpha t |\hat R|\leq 2\alpha t |R|$, so by reducing $\hat T>0$ if necessary, we may assume  that
$$|\pi(S)|\geq |R|-|R-\pi(S)|\geq (1-2\alpha t)|R|\geq \frac12 |R|.$$
Therefore $\scal(\hat{R})\leq \alpha |\pi(S)|$ and $|\scal(R)|\leq \alpha |\pi(S)|$.
In particular, \eqref{RpiS} develops to
\begin{equation}
\label{liberty}
|R-\pi(S)| \leq \alpha t |\pi(S)|.
\end{equation}
We can use this estimate to control the first term on the right-hand side of 
\eqref{custard}.
By the quadratic nature of $Q$, 
\begin{equation*}
\begin{aligned}
Q(R) &=Q(\pi(S)+(R-\pi(S)))\\
&= Q(\pi(S))+Q(R-\pi(S)) +\pi(S) * (R-\pi(S)),
\end{aligned}
\end{equation*}
where the $*$ notation is explained in \cite[\S 2.1]{RFnotes}.
In particular, 
\begin{equation*}
\begin{aligned}
\langle \xi(S), Q(R)\rangle &\leq 
\langle \xi(S), Q(\pi(S))\rangle + \alpha |R-\pi(S)|^2 +\alpha |\pi(S)|\cdot |R-\pi(S)|\\
&\leq -3\mu |\pi(S)|^2 + \alpha (t^2+t) |\pi(S)|^2\\
&\leq -2\mu |\pi(S)|^2,
\end{aligned}
\end{equation*}
by \eqref{BHS_est} and \eqref{liberty}, where (as usual) $\alpha$ is a $n$-dependent constant that can increase with each use, and we may have had to reduce $\hat T>0$.
The second term on the right-hand side of 
\eqref{custard} can be controlled by 
$$\langle \xi(S), -\mathrm{scal}(R)I \rangle \leq
|\mathrm{scal}(R)|\cdot |I| \leq
\alpha |\pi(S)| \leq (t\,\scal(\hat{R}))\alpha |\pi(S)|
\leq t \alpha |\pi(S)|^2.$$
Similarly, the fourth 
term on the right-hand side of 
\eqref{custard} can be controlled by
$$\langle \xi(S), -n(n-1)I\rangle \leq \alpha |I| \leq \alpha |R| \leq 
(t\,\scal(\hat{R}))\alpha |\pi(S)|\leq t \alpha |\pi(S)|^2,$$
where we have used \eqref{IR_simple}.
Finally, the third term on the right-hand side of 
\eqref{custard} can be controlled by
$$\langle \xi(S), -2t|\Ric(R)|^2 I \rangle \leq \alpha t|R|^2\leq t \alpha |\pi(S)|^2.$$
Combining, \eqref{custard} gives
\begin{equation}
\begin{aligned}
\rheat \underline{\varphi} &\leq -2\mu |\pi(S)|^2 + t \alpha |\pi(S)|^2\\
&\leq -\mu |\pi(S)|^2,
\end{aligned}
\end{equation}
after possibly reducing $\hat T>0$ again.
Meanwhile, because $\hat R\in \mathbf{C}$, we compute
\begin{equation}
\begin{aligned}
\varphi(x_1,t_1) &\leq |\hat{R}-S|=(t\,\scal(\hat{R})-1)|I|\\
&\leq t\, \scal(\hat{R})|I| \leq \alpha t\, |\pi(S)|.
\end{aligned}
\end{equation}
Thus there exists $\delta>0$ depending only on $n$ and $\mathbf{C}$
such that 
\begin{equation}
\rheat \underline{\varphi}\leq -\delta t^{-2} \varphi^2.
\end{equation}
at $(x_1,t_1)$ as claimed.

Now we are ready to set up a maximum principle argument. By hypothesis (iii) and 
\cite[Lemma 8.3(a)]{Perelman2002},
the function $\eta(x,t)=d_{g(t)}(x,x_0)+ \frac{5}{3}(n-1) r_0^{-1}t$ satisfies 
$$\rheat \eta \geq 0$$
in the barrier sense
whenever $d_{g(t)}(x,x_0)\geq r_0$. Let $\phi$ be a smooth non-increasing function on $\mathbb{R}$ such that $\phi\equiv 1$ on $(-\infty,1]$, $\phi$ vanishes outside $(-\infty,2]$ and satisfies $\phi''\geq -10^2\phi$, $|\phi'|\leq 10^2\phi^{1/2}$. Define 
$$ \Phi(x,t)=e^{-10^2tA^{-2}r_0^2}\phi\left(\frac{\eta(x,t)}{Ar_0^{-1}} \right),\;\; A=4(n-1)$$
so that $(\partial_t-\Delta_{g(t)})\Phi \leq 0$ in the sense of barriers. Moreover due to the choice of $A$, $\Phi(x_0,t)= e^{-10^2tA^{-2}r_0^2}$ for $t\in[0,T\wedge \hat T]$ and  $\Phi$ vanishes outside $B_{g(t)}(x_0,2Ar_0^{-1})$, i.e., $B_{g(t)}(x_0,8(n-1)r_0^{-1})$.

\medskip

Consider the function $G=\Phi\varphi$. We fix $s\in (0,T\wedge \hat T]$. Considering the support of $\Phi$, we see that $G$ attains its maximum on $M\times [0,s]$ at some point $(x_1,t_1)$ with $x_1\in B_{g(t_1)}(x_0,8(n-1)r_0^{-1})$. 
Because we showed that 
$\varphi(\cdot,0)\equiv 0$, throughout $B_{g(0)}(x_0,8(n-1)r_0^{-1})$, we see that $G(\cdot,0)\equiv 0$ throughout $M$.
Since our goal is to obtain an upper bound for $G$,
we may assume $t_1>0$ and $G(x_1,t_1)>0$; in particular we have both $\varphi(x_1,t_1)>0$ (so our calculations above are  valid) and 
$\Phi(x_1,t_1)>0$. 
We may assume $\Phi$ and $\varphi$ to be smooth when we apply the maximum principle; for example see \cite[Section 7]{SimonTopping2016} for a detailed exposition.

In this case, at $(x_1,t_1)$ we have $ \Phi \nabla \varphi =-\varphi\nabla \Phi$ from $\nabla G=0$ and hence,
\begin{equation}
\begin{split}
0\leq \rheat G&=  \varphi  \cdot  \rheat \Phi  + \Phi \cdot \rheat \varphi \\
&\quad -2\langle \nabla \varphi,\nabla\Phi\rangle \\
&\leq -\delta t^{-2} \varphi^2 \Phi + \frac{2|\nabla \Phi|^2}{\Phi} \varphi \\
&\leq \varphi\left(-\delta t^{-2}  G + \frac{2\cdot 10^4}{A^2r_0^{-2}} \right).
%\\
\end{split}
\end{equation}
Thus, $\sup_{M\times [0,s]}G=G(x_1,t_1)
\leq C_n r_0^2\delta^{-1}s^2$ for some dimensional constant $C_n>0$. Since $s$ is arbitrary in $[0,T\wedge \hat T]$, we have 
$$G(x,t)
\leq C_n r_0^2\delta^{-1}t^2$$
for all $(x,t)\in M\times [0,T\wedge\hat T]$.
Since $r_0\in (0,1)$, evaluating at $x_0$, where $\Phi(x_0,t)=e^{-10^2tA^{-2}r_0^2}$,  yields
\begin{equation}
\label{varphi_estimate}
\varphi(x_0,t)\leq e^{10^2tA^{-2}r_0^2}\cdot C_nr_0^2 \delta^{-1}t^2 \leq Lt^2,
\end{equation}
for some constant $L<\infty$ depending only on $n$ and $\mathbf{C}$, for $t\in [0,T\wedge \hat T]$.
By definition, we have $S-\varphi(x_0,t)\xi(S)\in \mathbf{C}$, and so
we can compute at $x_0$
\begin{equation}
\begin{aligned}
R+(4-t\,\scal(R))I &=S+2I+n(n-1)tI\\
& =[S-\varphi(x_0,t)\xi(S)]+[2I+n(n-1)tI+\varphi(x_0,t)\xi(S)]
\end{aligned}
\end{equation}
which must lie in $\mathbf{C}$ for $t\in [0,T\wedge\hat T]$,
provided we constrain $\hat{T}>0$ depending on $n$ and $\mathbf{C}$,
where we have used the convexity of the cone $\mathbf{C}$
and the fact that $I$ lies in the interior of $\mathbf{C}$, along with the estimate
\eqref{varphi_estimate}.
\end{proof}

\medskip

We now use the cone almost improving nature (i.e. Lemma~\ref{improving-cone}) to 
obtain  curvature estimates.

\begin{prop}\label{cur-estimate}
Suppose $M^n$ is a non-compact (connected) manifold for $n\geq 4$, and $g(t)$, $t\in [0,T]$ is a smooth solution to the Ricci flow on $M$ so that for some $x_0\in M$, $s_0>0$ and $b\in (0,\Upsilon_n)$,
we have 
\begin{enumerate}
\item[(i)] $B_{g(t)}(x_0,1)\Subset M$ for all $t\in [0,T]$; 
\item[(ii)] $R_{g(t)}+I \in \tilde C(b,s_0)$ on $B_{g(t)}(x_0,1)$ for $t\in [0,T]$.
\end{enumerate}
Then there exist $C_0(n,b,s_0),S_1(n,b,s_0)>0$ such that for $t\in (0,S_1\wedge T]$, 
$$|R_{g(t)}(x_0)|\leq C_0 t^{-1}.$$
\end{prop}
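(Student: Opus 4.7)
The plan is to argue by contradiction using a Perelman-style point-picking combined with the cone improvement of Lemma~\ref{improving-cone}. Suppose the conclusion fails; then for arbitrarily large $A>0$ there exists $t_0\in(0,T]$ with $t_0|R_{g(t_0)}(x_0)|>A$. A Perelman point-picking (cf.~\cite[Lemma 8.1]{Perelman2002}) in a parabolic neighbourhood of $(x_0,t_0)$ furnishes a point $(y,\tau)$ with $\tau\in(0,t_0]$, $y$ close to $x_0$, and $Q:=|R_{g(\tau)}(y)|$ so large that $\tau Q\to\infty$ with $A$, and such that $|R_{g(t)}|\leq 4Q$ on a parabolic ball around $(y,\tau)$ of spatial scale $\sim Q^{-1/2}$; in particular $|\Ric_{g(t)}|\leq 4(n-1)Q$ there.

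I would then apply Lemma~\ref{improving-cone} via the parabolic rescaling $\tilde g(\tilde t):=Qg(\tau-\hat T/Q+Q^{-1}\tilde t)$ for $\tilde t\in[0,\hat T]$, in which $|\tilde R|\leq 4$ on a unit-scale parabolic ball. Hypothesis (i) holds because after rescaling the ball $B_{g(t)}(x_0,1)$ has radius $\sqrt Q$ and comfortably contains the required $B_{\tilde g}(y,8(n-1))$; hypothesis (ii) $\tilde R+I\in\tilde C(b,s_0)$ follows from $R+QI=(R+I)+(Q-1)I\in\tilde C(b,s_0)$ by convexity and the fact that $I\in\tilde C(b,s_0)$; hypothesis (iii) is the Ricci bound just derived. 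Unrescaling the conclusion of the lemma yields
\begin{equation*}
R_{g(\tau)}(y)+\bigl(4Q-\hat T\,\mathrm{scal}(R_{g(\tau)}(y))\bigr)I\in \tilde C(b,s_0).
\end{equation*}

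The final step is to combine this improvement with the original cone condition $R_{g(\tau)}(y)+I\in\tilde C(b,s_0)$ to derive a contradiction. By property \textbf{(III)} of Proposition~\ref{properties-cone-2}, together with closedness and $O(n)$-invariance, there is a constant $c_0(n,b,s_0)>0$ with $\mathrm{scal}(R')\geq c_0|R'|$ for every $R'\in\tilde C(b,s_0)$. Applied to both the improved cone element and to $R_{g(\tau)}(y)+I$, and using the identities $\langle R,I\rangle=2\,\mathrm{scal}(R)$ and $|I|^2=2n(n-1)$ to expand $|R+\alpha I|^2=|R|^2+4\alpha\,\mathrm{scal}(R)+2\alpha^2 n(n-1)$, this produces coupled algebraic inequalities in $Q=|R_{g(\tau)}(y)|$ and $\sigma=\mathrm{scal}(R_{g(\tau)}(y))$. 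Analysing these in the regime $Q\to\infty$, where the pinching forces $\sigma\asymp Q$, one obtains an inequality of the form $Q\leq C_0/\tau$ for all $\tau\leq S_1$, with $C_0,S_1>0$ depending only on $n,b,s_0$, thereby contradicting $\tau Q\to\infty$. The main obstacle is arranging hypothesis (i) of Lemma~\ref{improving-cone} at the point-picking point---for which the parabolic rescaling above is essential---and then extracting the quantitative contradiction from the simultaneous cone inequalities in the last step.
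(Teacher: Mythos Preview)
Your point-picking and rescaling set-up is fine, and the verification of the hypotheses of Lemma~\ref{improving-cone} for the rescaled flow is essentially correct. The gap is in the final step, and it is a genuine one.

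After rescaling by $Q$ so that $|R_{\tilde g(\hat T)}(y)|=1$, the scalar curvature $\tilde\sigma=\mathrm{scal}(R_{\tilde g(\hat T)}(y))$ is bounded by a dimensional constant, say $\tilde\sigma\leq C(n)$. Since $\hat T$ in Lemma~\ref{improving-cone} is forced to be \emph{small} (this is built into its proof via Yokota's estimate), the coefficient $4-\hat T\tilde\sigma$ is strictly positive, indeed close to $4$. Unrescaling, your ``improvement''
\[
R_{g(\tau)}(y)+(4Q-\hat T\sigma)\,I\in\tilde C(b,s_0)
\]
has $4Q-\hat T\sigma\geq (4-\hat T C(n))Q\gg 1$, and is therefore \emph{weaker} than the hypothesis $R_{g(\tau)}(y)+I\in\tilde C(b,s_0)$ you already had (just add $(4Q-\hat T\sigma-1)I\in\tilde C(b,s_0)$ to the latter). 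So the two cone memberships you propose to combine carry no more information than the original pinching alone. Moreover, the time $\tau$ does not appear in either of your algebraic inequalities, so there is no mechanism to produce a bound of the form $Q\leq C_0/\tau$; the asserted conclusion is ungrounded. Finally, you never use that $M$ is non-compact, and without that assumption the statement is false (a round sphere satisfies all the other hypotheses with arbitrarily large curvature).

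The paper's argument proceeds very differently after the point-picking. Rather than a single application of Lemma~\ref{improving-cone}, it runs a continuity argument in the cone parameter $s$: one shows that the set of $s'\geq s_0$ for which the rescaled flows satisfy $R_{\tilde g_k}+\varepsilon I\in\tilde C(b,s')$ (for all $\varepsilon>0$, on arbitrarily large parabolic regions, for large $k$) is both open and closed in $[s_0,\infty)$. Openness is where Lemma~\ref{improving-cone} enters, applied after a \emph{further} rescaling by a small factor so that the $-t\,\mathrm{scal}$ term genuinely dominates. This pushes the curvature into $\tilde C(b,s)$ for all $s$, hence into $\ell_{a,b}(\CPICt)$, and then a second continuity argument with the cones $\hat C(s)$ forces any local blow-up limit to be a space form of positive curvature. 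The contradiction then comes from non-compactness of $M$, exactly as in \cite[Lemma~3.3]{LeeTopping2022}.
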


\medskip
Thus we obtain $C_0/t$ decay of the full curvature tensor analogous to the estimate 
in \cite[Lemma 3.3]{LeeTopping2022}.

\begin{proof}
The proof of the curvature estimate uses the Perelman-inspired point-picking argument from \cite[Lemma 2.1]{SimonTopping2016}, and initially mirrors the proof of 
\cite[Lemma 3.3]{LeeTopping2022}.

Suppose the conclusion is false for some
$n\geq 4$, $b\in (0,\Upsilon_n)$ 
and $s_0>0$.
Then 
for any $a_k\to +\infty$, we can find a sequence of non-compact manifolds $M_k^n$, Ricci flows $g_k(t),t\in [0,T_k]$ and $x_k\in M_k$ satisfying the hypotheses but so the curvature estimate fails with $C_0=a_k$ in an arbitrarily short time. We may assume $a_kT_k\to 0$.   By smoothness of each Ricci flow, we can choose $t_k\in (0,T_k]$ so that
\begin{enumerate}
\item[(i)] $B_{g_k(t)}(x_k,1)\Subset M_k$ for $t\in [0,t_k]$;
\item[(ii)] $R_{g_k(t)}+I\in \tilde C(b,s_0)$ on $B_{g_k(t)}(x_k,1)$ for $t\in [0,t_k]$;
\item[(iii)]$|R_{g_k(t)}(x_k)|<a_k t^{-1}$ 
for $t\in (0,t_k)$;
\item[(iv)]$|R_{g_k(t_k)}(x_k)|=a_k t_k^{-1}$.
\end{enumerate}
\medskip

By (iv) and the fact that $a_kt_k\rightarrow 0$, \cite[Lemma 5.1]{SimonTopping2016} implies that for sufficiently large $k$, we can find $\b(n)>0$, times $\tilde t_k\in (0,t_k]$ and points $\tilde x_k\in B_{g_k(\tilde t_k)}(x_k,\frac{3}{4}-\frac{1}{2}\b\sqrt{a_k \tilde t_k})$ such that 
\begin{equation}\label{CURVATURE}
|R_{g_k(t)}(x)|\leq 4|R_{g_k(\tilde t_k)}(\tilde x_k)|=4Q_k
\end{equation}
whenever $d_{g_k(\tilde t_k)}(x,\tilde x_k)<\frac{1}{8}\b a_kQ_k^{-1/2}$ and $\tilde t_k-\frac{1}{8}a_kQ_k^{-1}\leq t\leq \tilde t_k$ where $\tilde t_k Q_k\geq a_k\rightarrow +\infty$.

Included in the proof of \cite[Lemma 5.1]{SimonTopping2016}
is that for each such $(x,t)$ we have $x\in B_{g_k(t)}(x_k,1)$
i.e. the cylinder 
$B_{g_k(\tilde t_k)}(\tilde x_k,\frac{1}{8}\b a_kQ_k^{-1/2})\times
[\tilde t_k-\frac{1}{8}a_kQ_k^{-1},\tilde t_k]$
where \eqref{CURVATURE} holds lies within the region where (ii) holds. 

\medskip

Consider the parabolic rescaling centred at $(\tilde x_k,\tilde t_k)$, namely $\tilde g_k(t)=Q_k g_k(\tilde t_k+Q_k^{-1}t)$ for $t\in [-\frac{1}{8}a_k,0]$ so that
\begin{enumerate}
\item[(a)] $|R_{\tilde g_k(0)}(\tilde x_k)|=1$;
\item[(b)] $|R_{\tilde g_k(t)}|\leq 4$ on $B_{\tilde g_k(0)}(\tilde x_k,\frac18 \b a_k)\times [-\frac18 a_k,0]$, and
\item[(c)]  $R_{\tilde g_k(t)}+Q_k^{-1}I \in \tilde C(b,s_0)$ on $B_{\tilde g_k(0)}(\tilde x_k,\frac18 \b a_k)\times [-\frac18 a_k,0]$.
\end{enumerate}
\medskip
If we had a uniform positive lower bound on the injectivity radii
$\mathrm{inj}(\tilde g_k(0))(\tilde x_k)$, then Hamilton's compactness
theorem  would enable us to extract a  subsequence converging in the $C^\infty$ Cheeger-Gromov sense to a complete ancient solution of Ricci flow $g_\infty(t)$ which would be non-compact, non-flat and have bounded curvature. Moreover, we would have $R_{g_\infty(t)}\in \tilde C(b,s_0)$ for all $t\leq 0$ so that \cite[Lemma 15]{TakumiYokota-CAG} would 
apply to give a contradiction to the non-compactness of the underlying manifolds $M_k$. 
To accommodate the lack of an injectivity radius lower bound, we can instead take a local limit $\tilde g_\infty(t)$
of $\tilde g_k(t)$ by lifting to a Euclidean ball via the exponential map of $\tilde g_k(0)$ 
as in the proof of \cite[Lemma 3.3]{LeeTopping2022}.
(We only need to consider the local limit $\tilde g_\infty(0)$, but we take the limit at each time for consistency with \cite{LeeTopping2022}.)
Since the limit is a priori only locally defined on a ball in Euclidean space, we need to extract more information along the sequence first.

We now improve the pinching behaviour at $t=0$ by using the fact that the flow is \textit{almost} a complete ancient solution with bounded curvature. We first show that it becomes almost PIC2
pinched.

\begin{claim}\label{claim:pinching-1}
For any $L,\e>0$ and $s\geq s_0$, there exists $N\in \mathbb{N}$ such that for all $k>N$, we have 
$$R_{\tilde g_k(t)}+\e I \in \tilde C(b,s)$$
on $B_{\tilde g_k(0)}(\tilde x_k,L)\times [-L^2,0]$.
\end{claim}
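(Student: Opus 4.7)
The plan is a contradiction plus blow-up argument, followed by an adaptation of the pinching-improvement rigidity of \cite{BrendleHuiskenSinestrari2011} to a local ancient limit.

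Suppose the claim fails for some $L, \e > 0$ and $s \geq s_0$. Passing to a subsequence, pick $y_k \in B_{\tilde g_k(0)}(\tilde x_k, L)$ and $\tau_k \to \tau_\infty$ in $[-L^2, 0]$ with
$$R_{\tilde g_k(\tau_k)}(y_k) + \e I \notin \tilde C(b, s).$$
The bound (b) together with Shi's derivative estimates give $k$-uniform $C^\infty$ control of $R_{\tilde g_k(t)}$ on $B_{\tilde g_k(0)}(\tilde x_k, \tfrac18 \b a_k) \times [-\tfrac18 a_k, 0]$. Following the proof of \cite[Lemma 3.3]{LeeTopping2022}, lift via the exponential map $\exp^{\tilde g_k(0)}_{\tilde x_k}$ on Euclidean balls of growing radius; a diagonal subsequence produces a smooth local limit Ricci flow $\tilde g_\infty(t)$ on $\R^n \times (-\infty, 0]$ with $|R_{\tilde g_\infty}| \leq 4$. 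Since $Q_k^{-1} \to 0$ and $\tilde C(b, s_0)$ is closed, $R_{\tilde g_\infty(t)} \in \tilde C(b, s_0)$ everywhere; meanwhile continuity of the family $\tilde C(b, \cdot)$ gives, at the chart-limit $y_\infty$ of $y_k$,
$$R_{\tilde g_\infty(\tau_\infty)}(y_\infty) + \e I \notin \tilde C(b, s).$$

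Since $I$ lies in the interior of $\tilde C(b, s)$, it suffices to show $R_{\tilde g_\infty(\tau_\infty)}(y_\infty) \in \tilde C(b, s)$ to derive the desired contradiction. For this, we apply Lemma \ref{improving-cone} to the shifted flow $h(\cdot) := \tilde g_\infty(\cdot - T + \tau_\infty)$ on $[0, T]$, around $y_\infty$, with cone $\mathbf{C} = \tilde C(b, s')$ for some $s' \geq s_0$. Whenever the hypothesis $R_{h(0)} + I \in \tilde C(b, s')$ is satisfied on an arbitrarily large Euclidean ball (which it is for $s'$ in a neighbourhood of $s_0$, using continuity of the nested family and the fact that $R_{h(0)} \in \tilde C(b, s_0)$ with $I$ interior), the conclusion reads
$$R_{\tilde g_\infty(\tau_\infty)}(y_\infty) + \bigl(4 - (T \wedge \hat T)\, \scal\bigr) I \in \tilde C(b, s'),$$
where $\scal := \scal\bigl(R_{\tilde g_\infty(\tau_\infty)}(y_\infty)\bigr) > 0$ by property {\bf (III)} for $\tilde C(b, s_0)$. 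Taking $T$ large and running an open-closed continuity argument in $s' \geq s_0$, in the spirit of \cite[Theorem 12]{BrendleHuiskenSinestrari2011}, pushes the pinching all the way to $s' = s$.

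The main obstacle is adapting BHS's compact maximum-principle argument to this local Euclidean limit. The reason it still runs is that $\tilde g_\infty$ enjoys the global bounds $|R_{\tilde g_\infty}| \leq 4$ and $R_{\tilde g_\infty} \in \tilde C(b, s_0)$ on all of $\R^n \times (-\infty, 0]$: the Perelman-type spacetime cut-off already built into Lemma \ref{improving-cone} supplies the needed localisation, while the infinite past supplies the room to slide the parameter $s'$ up to $s$.
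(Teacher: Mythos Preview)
There is a genuine gap in your argument, and it is the very obstacle the paper is designed to work around.

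You assert that the exponential-map lifts produce a smooth local limit $\tilde g_\infty(t)$ on all of $\R^n\times(-\infty,0]$. This is not justified: the pullback $(\exp_{\tilde x_k}^{\tilde g_k(0)})^*\tilde g_k(0)$ is a \emph{Riemannian} metric only inside the conjugate radius, and the bound $|R_{\tilde g_k}|\leq 4$ gives merely a fixed dimensional lower bound for that radius. Beyond it the pullback may degenerate, so after passing to the limit you only obtain $\tilde g_\infty$ on a Euclidean ball of fixed, finite radius. The paper says exactly this (``the limit is a priori only locally defined on a ball in Euclidean space, [so] we need to extract more information along the sequence first''). Consequently you cannot apply Lemma~\ref{improving-cone} to $\tilde g_\infty$: hypothesis~(i) there demands $g(t)$-balls of radius $8(n-1)r_0^{-1}$, and the Ricci bound forces $r_0\leq\tfrac12$, so you would need balls of radius at least $16(n-1)$, far larger than the domain of the local limit. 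Nor can you invoke the Brendle--Huisken--Sinestrari/Yokota rigidity, which requires a complete ancient solution. The paper avoids this by running the entire open--closed argument \emph{along the sequence} $\tilde g_k$, where balls of radius $\tfrac18\beta a_k\to\infty$ and arbitrarily deep past are available; only after Claims~\ref{claim:pinching-1} and~\ref{claim:pinching-2} are established on the sequence does it pass to a (small) local limit.

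Even setting the domain problem aside, your open--closed step is incomplete. Lemma~\ref{improving-cone} yields $R+(4-\hat T\,\mathrm{scal})I\in\tilde C(b,s')$ at the single point $y_\infty$, but to iterate you need the improved cone hypothesis on a whole region around $y_\infty$; and the coefficient $4-\hat T\,\mathrm{scal}$ is only negative when $\mathrm{scal}>4/\hat T$, which does not hold uniformly. The paper resolves both issues simultaneously: it defines $\mathcal S$ so that the pinching $R_{\tilde g_k(t)}+\e I\in\tilde C(b,s')$ holds on \emph{all} of $B_{\tilde g_k(0)}(\tilde x_k,L)\times[-L^2,0]$ for every $L$, and then rescales by $4r_0^2=\e/16$ before applying Lemma~\ref{improving-cone}, converting the ``$+4$'' into ``$+\e/4$''. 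After rewriting in terms of $\check R=R_{\tilde g_k}+\e I$, this yields the uniform conclusion $\check R-\sigma'\,\mathrm{scal}(\check R)I\in\tilde C(b,s')$, which (since $\mathrm{scal}(\check R)\geq \e n(n-1)$) gives a definite improvement in $s'$ via the continuity of the cones.
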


\begin{proof}[Proof of claim]
Let $\mathcal{S}$ be the set of $ s'\in [s_0,+\infty)$ so that for all $L>0$ and $\e\in (0,1)$, we can find $N\in \mathbb{N}$ such that for all $k>N$, we have 
\begin{equation}
\label{starstar}
R_{\tilde g_k(t)}+\e I \in  \tilde C(b,s')
\end{equation}
on $B_{\tilde g_k(0)}(\tilde x_k,L)\times [-L^2,0]$. Clearly, $s_0\in \mathcal{S}$
by (c) above. We want to show that $\mathcal{S}$ is open and closed so that $\mathcal{S}=[s_0,+\infty)$.

We first show that $\mathcal{S}$ is closed.
Let $s_i\in \mathcal{S}$ so that $s_i\to s_\infty\in \mathbb{R}$. 
By definition of the cones $\tilde C(b,\tau)$ we see we have the following type of continuity of the cones with respect to $\tau$: 
For any $\e'>0$, there exists $\delta>0$ such that if $R\in \tilde C(b,s)$ for some $|s-s_\infty|<\delta$, then $R+\e'\;  \mathrm{scal}(R)\cdot I\in \tilde C(b,s_\infty)$.

For any given $L>0$ and $\e\in (0,1)$
for which we want \eqref{starstar} to be true with $s'=s_\infty$ for sufficiently large $k$,
we choose 
$\e'=\frac1{8n(n-1)} \e$
and obtain a corresponding $\delta>0$ from the above continuity. We fix $i_0$ sufficiently large so that $|s_{i_0}-s_\infty|<\delta$. Since $s_{i_0}\in \mathcal{S}$, 
we can apply the definition of $s_{i_0}\in \mathcal{S}$ with $\e$ replaced by $\frac{\e}{4}$ to deduce that 
there exists $N\in \mathbb{N}$ 
such that for all $k>N$,
$$\textstyle R_{\tilde g_k(t)}+\frac{\e}{4} I \in  \tilde C(b,s_{i_0}) $$
on $B_{\tilde g_k(0)}(\tilde x_k,L)\times [-L^2,0]$. 
Hence, $\hat R=R_{\tilde g_k(t)}+\frac{\e}{4} I$ satisfies 
\begin{equation}
\begin{split}
\textstyle
\hat R+\frac1{8n(n-1)} \e \cdot \mathrm{scal}(\hat R) I \in  \tilde C(b,s_\infty ).
\end{split}
\end{equation}
%\medskip
%
Since
$$\textstyle \mathrm{scal}(\hat R)=\mathrm{scal}(R_{\tilde g_k(t)})+ \frac{\e}{4} n(n-1),$$
we can then unwind the definition of $\hat{R}$ to give
\begin{equation}
R_{\tilde g_k(t)}+ \left(\frac{\e}{4} +\frac18 \cdot \frac{\e^2}{4}  +\frac1{8n(n-1)} \e \cdot  \mathrm{scal}(R_{\tilde g_k(t)}) \right)I \in \tilde C(b,s_\infty).
\end{equation}
A coarse consequence of the curvature bound $|R_{\tilde g_k(t)}|\leq 4$ from (b)
is that $\scal(R_{\tilde g_k(t)})\leq 4n(n-1)$, so 
keeping in mind that $\e\leq 1$ 
we find that
$$R_{\tilde g_k(t)}+\e I \in  \tilde C(b,s_\infty)$$
on $B_{\tilde g_k(0)}(\tilde x_k,L)\times [-L^2,0]$ for sufficiently large $k$. 
Hence $\mathcal{S}$ is closed.

\bigskip

It remains to show that $\mathcal{S}$ is open. 
Let $s'\in \mathcal{S}$.
It suffices to show that for some $\sigma'>0$, the following is true:
For all $L>0$ and $\e\in (0,1)$, the modified curvature tensor
$\check R=R_{\tilde g_k(t)}+\e I$ satisfies $\check R-\sigma'\cdot \mathrm{scal}(\check R)I \in \tilde C(b,s')$ 
on $B_{\tilde g_k(0)}(\tilde x_k,L)\times [-L^2,0]$
for sufficiently large $k$. The result will then follow using the continuity of the cones.

Set $\e'=\frac{\e}{32}$
and let $L'>L$ be a constant to be chosen later. Since $s'\in \mathcal{S}$, there exists $N\in \mathbb{N}$ such that for all $k>N$, we have 
$$R_{\tilde g_k(t)}+\e' I \in  \tilde C(b,s') $$
on $B_{\tilde g_k(0)}(\tilde x_k,L')\times [-(L')^2,0]$. 

Define $\sigma':=\min\{\hat T(n, \tilde C(b,s') ),\frac1{2n(n-1)}\}$, 
where $\hat T$ is the positive constant obtained from Lemma~\ref{improving-cone} with $\mathbf{C}$ chosen to be $\tilde C(b,s')$.
Set $r_0=\frac18\sqrt{\e}\in (0,1)$,
let $(x,\tau)\in B_{\tilde g_k(0)}(\tilde x_k,L)\times [-L^2,0]\Subset B_{\tilde g_k(0)}(\tilde x_k,L')\times [-(L')^2,0]$ and define a rescaled Ricci flow
$$\hat g_k(t)=4r_0^2\, \tilde g_k\left(\tau+ \frac{t-\sigma'}{4r_0^2} \right), \; t\in [0,\sigma'].$$

In order to ensure that $\hat g_k(t)$ is defined on the whole time interval $[0,\sigma']$, even if $\tau$ is chosen at its least value $-L^2$, we ask that $L'$ is large enough so that $L^2+\frac{\sigma'}{4r_0^2}\leq (L')^2$.
We would also like to choose $L'$ sufficiently large so that for every $t\in [0,\sigma']$ there is enough space to fit balls $B_{\hat g_k(t)}(x, 8(n-1)r_0^{-1})$ within 
$B_{\tilde g_k(0)}(\tilde x_k,L')$.
By the overall curvature bound of part (b), we have 
$|R_{\tilde{g}_k(\cdot)}|\leq 4$, 
and this bound is enough to allow us to compare geodesic balls 
at different times.
In particular, we have the inclusion
$$B_{\hat g_k(t)}(x, 8(n-1)r_0^{-1}) \subset
B_{\tilde g_k(0)}(x,L_0)
$$
for all $t\in [0,\sigma']$,
for some $L_0$ depending on $n$, $\e$, $L$ and $\sigma'$. 
Thus it suffices to insist that $L'\geq L+L_0$. 
Furthermore since $a_k\to+\infty$, for sufficiently large $k$ we have 
\begin{equation}
B_{\tilde g_k(0)}(\tilde x_k,L')=B_{g_k(\tilde t_k)}(\tilde x_k,L'Q_k^{-1/2})\subset B_{g_k(\tilde t_k)}(\tilde x_k,\frac18 \beta a_kQ_k^{-1/2})\subset 
B_{g_k(\tilde t_k)}(x_k,1)
\end{equation}
so that the region under consideration is compactly contained in $M_k$.

We conclude that, increasing $N$ if necessary (possibly depending also on $\e$, $L'$), for $k>N$,
$\hat g_k(t),t\in [0,\sigma']$ is a Ricci flow such that for all $t\in [0,\sigma']$,
\begin{enumerate}
\item $B_{\hat g_k(t)}(x, 8(n-1)r_0^{-1})\Subset M_k$;
\item $\Rold_{\hat g_k(t)}+
I \in  \tilde C(b,s' )$ on $B_{\hat g_k(t)}(x, 8(n-1)r_0^{-1})$;
\item $\Ric_{\hat g_k(t)}\leq 
(n-1) r_0^{-2}$ on $B_{\hat g_k(t)}(x,r_0)$.
\end{enumerate}

\medskip

\noindent
Applying Lemma~\ref{improving-cone} to $\hat{g}_k(t)$ at $(x,\sigma')$ 
gives
$$R_{\hat{g}_k(\sigma')}(x)+\left(4-\sigma'\cdot \mathrm{scal}(R_{\hat{g}_k(\sigma')}(x)) \right)I \in \tilde C(b,s').$$
Rescaling the metric back by the same factor $4r_0^2=\frac{\e}{16}$ then gives
$$R_{\tilde g_k(\tau)}(x)+\left(\frac{\e}{4}
- \sigma'\cdot \mathrm{scal}(R_{\tilde  g_k(\tau)}(x))\right)I\in  \tilde C(b,s').$$
Rewriting using $\check R=R_{\tilde g_k(\tau)}(x)+\e I$ gives
\begin{equation}
\check R +\left(\frac{\e}{4}+\sigma' \e n(n-1)-\e\right)I- \sigma'\cdot \mathrm{scal}(\check R) I\in  \tilde C(b,s').
\end{equation}
By definition,  $\sigma'\leq \frac1{2n(n-1)}$, so the part in brackets is negative
and we deduce that for the given $\e,L>0$, 
\begin{equation}
\check R - \sigma'\cdot \mathrm{scal}(\check R) I\in  \tilde C(b,s')
\end{equation}
throughout $B_{\tilde g_k(0)}(\tilde x_k,L)\times [-L^2,0]$ for sufficiently large $k$,
as desired.
This shows that $\mathcal{S}$ is also open and hence $S=[s_0,+\infty)$. This completes the proof of the claim.
\end{proof}

Thanks to  Claim~\ref{claim:pinching-1}, any local limit 
$\tilde g_\infty(0)$ 
(in the sense described before Claim \ref{claim:pinching-1})
of the metrics $\tilde g_k(0)$
has curvature lying in 
$\tilde C(b,s):=\ell_{b}(\check C(s))$,
for all $s$ sufficiently large. By definition, being in $\check C(s)$ for all $s$ sufficiently large is equivalent to being in $\CPICt$. Thus we find that 
$\tilde g_\infty(0)\in \ell_{b}(\CPICt)\subset \CPICt$ because 
$\ell_{b}:=\ell_{a,b}$ for $a:=b+\frac12(n-2)b^2>b\geq 0$,
e.g. using Lemma \ref{lma:linear-algebra}.
Next, we will improve this control further by showing that 
$\tilde g_\infty(0)$ lies in $\hat C(s)$ for all $s$ sufficiently large, which will ultimately show that any local blow up limit 
$\tilde g_\infty(0)$ is in fact a space form.

\begin{claim}\label{claim:pinching-2}
There exists $\hat s_0>0$ such that the following is true: for any $L,\e>0$ and $s\geq \hat s_0$, there exists $N\in \mathbb{N}$ such that for all $k>N$, we have 
$$R_{\tilde g_k(t)}+\e I \in \hat C(s)$$
on $B_{\tilde g_k(0)}(\tilde x_k,L)\times [-L^2,0]$.
\end{claim}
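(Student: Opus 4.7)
The plan is to follow the open/closed template of the proof of Claim~\ref{claim:pinching-1}, with the cones $\hat C(s)$ in place of $\tilde C(b, s)$. For a suitably chosen $\hat s_0 > 0$, I would define
$$\hat{\mathcal{S}} := \{s' \in [\hat s_0, +\infty) : \forall L > 0,\ \forall \e \in (0, 1),\ \exists N,\ \forall k > N,\ R_{\tilde g_k(t)} + \e I \in \hat C(s') \text{ on } B_{\tilde g_k(0)}(\tilde x_k, L) \times [-L^2, 0]\}$$
and verify that $\hat{\mathcal{S}}$ is nonempty, closed and open in $[\hat s_0, +\infty)$, hence equal to the whole interval by connectedness.

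For the base case, I would use Claim~\ref{claim:pinching-1} with $s$ taken as large as desired. Since $\check C(s) \to \CPICt$ as $s \to \infty$ and $\tilde C(b,s) = \ell_{a,b}(\check C(s))$, a small perturbation together with Lemma~\ref{lma:linear-algebra}(iii) shows that $R_{\tilde g_k(t)} + \tfrac{\e}{2} I$ is uniformly PIC2-pinched: there exists $\delta_0 > 0$ depending only on $n, a, b$ such that
$$\bigl(R_{\tilde g_k(t)} + \tfrac{\e}{2} I\bigr) - \delta_0\, \scal\bigl(R_{\tilde g_k(t)} + \tfrac{\e}{2} I\bigr)\cdot I \in \CPICt$$
for sufficiently large $k$. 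Inspection of the definition of $\hat C(s)$ in case (a) shows that its defining parameters $a, p$ both tend to $0$ as $s \to 0$, so that $\ell_{a,s}$ approaches the identity and the Ricci constraint becomes vacuous; hence $\hat C(s) \to \CPICt$ as $s \searrow 0$. Combining this with the uniform bound $|R_{\tilde g_k(t)}| \leq 4$ and the extra buffer $\tfrac{\e}{2} I$, I would choose $\hat s_0 > 0$ small enough, depending only on $n, \delta_0, \e$, to force $R_{\tilde g_k(t)} + \e I \in \hat C(\hat s_0)$ for large $k$, establishing $\hat s_0 \in \hat{\mathcal{S}}$.

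The closedness of $\hat{\mathcal{S}}$ in $[\hat s_0, +\infty)$ would transfer directly from the closedness part of Claim~\ref{claim:pinching-1}, relying only on the continuity of $s \mapsto \hat C(s)$ established in Proposition~\ref{properties-cone} and the uniform curvature bound. For openness, given $s' \in \hat{\mathcal{S}}$, the cone $\hat C(s')$ satisfies properties (I)-(IV) by Proposition~\ref{properties-cone}, so Lemma~\ref{improving-cone} applies with $\mathbf{C} = \hat C(s')$, yielding a positive constant $\hat T(n, \hat C(s'))$. The point-picking, parabolic rescaling and Perelman-cutoff argument from the openness portion of Claim~\ref{claim:pinching-1} then carries over essentially verbatim, concluding with $\check R - \sigma' \scal(\check R) I \in \hat C(s')$ on the target region for $\check R = R_{\tilde g_k(\tau)}(x) + \e I$. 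Continuity of the family $\hat C(s)$ in $s$ and the buffer $\sigma' \scal(\check R) I$ (in the interior of each nearby cone since $I$ is) then promote this to $\check R \in \hat C(s'')$ for $s''$ in a neighbourhood of $s'$, giving openness.

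The main obstacle is the base case: bridging the PIC1-adapted cones $\tilde C(b, s)$ supplied by Claim~\ref{claim:pinching-1} to the PIC2-adapted Brendle--Schoen cones $\hat C(\hat s_0)$, via the intermediate PIC2 pinching. The openness and closedness steps are direct mechanical repetitions of Claim~\ref{claim:pinching-1} modulo the substitution of $\hat C(s)$ for $\tilde C(b, s)$.
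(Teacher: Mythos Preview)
Your approach is essentially the same as the paper's: establish a cone inclusion $\tilde C(b,s)\subset \hat C(\hat s_0)$ for large $s$ and suitably small $\hat s_0$ via the intermediate $\frac{\delta}{2}$-PIC2-pinched cone, and then rerun the open/closed argument of Claim~\ref{claim:pinching-1} verbatim with $\hat C$ in place of $\tilde C$. The paper phrases the base case entirely at the cone level, writing
\[
\tilde C(b,s)\ \subset\ \Big\{S: S-\tfrac{\delta}{2}\,\mathrm{scal}(S)\cdot I\in\CPICt\Big\}\ \subset\ \hat C(\hat s_0),
\]
and then invokes Claim~\ref{claim:pinching-1} to place $R_{\tilde g_k(t)}+\e I$ in $\tilde C(b,s)$; the closedness and openness steps are declared to be identical to those of Claim~\ref{claim:pinching-1}.

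There is one slip you should correct. You write that $\hat s_0$ is chosen ``depending only on $n,\delta_0,\e$''. This would break the logic: $\hat s_0$ must be fixed \emph{before} $\e$ (the claim asserts a single $\hat s_0$ valid for all $\e>0$, and membership in $\hat{\mathcal{S}}$ is quantified over all $\e$). In fact your own argument shows the dependence on $\e$ is spurious: once you know $S-\delta_0\,\mathrm{scal}(S)\cdot I\in\CPICt$, the inclusion of this pinched cone in $\hat C(\hat s_0)$ follows from $\hat C(s)\to\CPICt$ as $s\searrow 0$ together with continuity of the convex cones, and requires $\hat s_0$ to depend only on $n$ and $\delta_0$. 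The ``extra buffer $\tfrac{\e}{2}I$'' and the bound $|R_{\tilde g_k(t)}|\le 4$ are not needed at this step. With that correction your proposal matches the paper.
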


\begin{proof}[Proof of claim]
Since $\check{C}(s)\to \CPICt$ as $s\to\infty$, 
we have $\tilde C(b,s)\to \ell_{b}(\CPICt)$  by definition. 
Lemma~\ref{lma:linear-algebra} tells us that if $R\in \ell_{b}( \CPICt)$ then
for $\delta>0$ depending on $b>0$ and $n$, we have $R-\delta\,\mathrm{scal}(R)\cdot I\in \CPICt$. Thus for $\tilde{s}>0$ large enough, 
$$\tilde C(b,s)\subset \{S\in \mathcal{C}_B(\mathbb{R}^n)\ :\ 
S-\frac{\delta}{2}\,\mathrm{scal}(S)\cdot I\in \CPICt\}$$
for every $s\geq \tilde{s}$, while for $\hat{s_0}>0$ small enough, by continuity of the cones $\hat C(s)$, and the fact that $\hat C(s)\to \CPICt$ as $s\searrow 0$, we have
$$\{S\in \mathcal{C}_B(\mathbb{R}^n)\ :\ 
S-\frac{\delta}{2}\,\mathrm{scal}(S)\cdot I\in \CPICt\}\subset \hat C(\hat s_0).$$
To summarise, for our given small $\hat s_0>0$, we have 
\begin{equation}
\label{initial_inclusion}
\tilde C(b,s)\subset \hat C(\hat s_0)
\end{equation}
for every $s\geq \tilde s$.

Mimicking the proof of Claim~\ref{claim:pinching-1}, we let $\mathcal{S}'$ be the set of $ s'\in [\hat s_0,+\infty)$ so that for all $L>0$, $\e\in (0,1)$, we can find $N\in \mathbb{N}$ such that for all $k>N$, we have 
$$R_{\tilde g_k(t)}+\e I \in \hat C(s')$$
on $B_{\tilde g_k(0)}(\tilde x_k,L)\times [-L^2,0]$. 
The set $\mathcal{S}'$ is non-empty because $\hat s_0\in \mathcal{S}'$
by Claim~\ref{claim:pinching-1} and \eqref{initial_inclusion}.
Now we can carry out the same argument as in the proof of Claim~\ref{claim:pinching-1} to show that $\mathcal{S}'=[\hat s_0,+\infty)$. This completes the proof of the claim.
\end{proof}

\medskip

By Claim~\ref{claim:pinching-2}, any local limit $\tilde g_\infty(0)$
of the metrics $\tilde g_k(0)$
is a space-form with non-negative curvature. 
As $|R_{\tilde g_k(0)}(\tilde x_k)|=1$, this forces $\Ric_{\tilde g_\infty(0)}=
\a$ for some dimensional constant $\a>0$. Now we can argue as in 
the proof of
\cite[Lemma 3.3]{LeeTopping2022} to draw a contradiction from the non-compactness of $M_k$ for $k$ sufficiently large. This finishes the proof.
\end{proof}

\section{Existence of Ricci flow under pinching}
\label{exist_sect}

In this section, we will construct a smooth complete Ricci flow solution with scaling invariant estimates from metrics with pinched curvature.  To do this, we will need a local existence theorem for the Ricci flow, which is one of the main goals of this section.

\begin{thm}\label{thm:local-RF}
For any $s_0>0$, $n\geq 4$ and $b\in (0,\Upsilon_n)$,
there exist $a_0(n,b,s_0)>0$ and $T(n,b,s_0)>0$ such that the following holds.  Suppose $(M^n,g_0)$ is a manifold such that 
\begin{enumerate}
\item[(a)] $B_{g_0}(p,4)\Subset M$;
\item[(b)] $R_{g_0} \in \tilde C(b,s_0)$ on $B_{g_0}(p,4)$;
\end{enumerate}
Then there exists a  smooth Ricci flow solution $g(t)$ defined on $B_{g_0}(p,1)\times [0,T]$ such that $g(0)=g_0$ and 
\begin{enumerate}
\item[(i)] $R_{g(t)}+I\in \tilde C(b,s_0)$; 
\item[(ii)]  $|R_{g(t)}|\leq a_0 t^{-1}$.
\end{enumerate}
\end{thm}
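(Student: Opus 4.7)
The plan is to combine the two a priori estimates — Lemma~\ref{preserv-cone} (cone preservation, given a $c_0/t$ curvature bound) and Proposition~\ref{cur-estimate} ($C_0/t$ curvature decay, given the cone condition) — in a bootstrap continuity argument, starting from a short-time solution obtained by classical means. Since $B_{g_0}(p,4)\Subset M$, the metric $g_0$ is smooth on a compact neighbourhood of $\overline{B_{g_0}(p,3)}$ and hence has bounded curvature there; so either by Shi's theorem applied to a complete, bounded-curvature extension of $g_0$, or by a Hochard-style localised Ricci flow construction, we obtain a smooth solution $g(t)$ on $B_{g_0}(p,3)\times [0,T')$ for some maximal $T'>0$ a priori depending on $\sup_{B_{g_0}(p,4)}|R_{g_0}|$.

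Let $C_0=C_0(n,b,s_0)$ and $S_1=S_1(n,b,s_0)$ denote the constants from Proposition~\ref{cur-estimate}, and let $S_0=S_0(n,2C_0,b,s_0)$ denote the constant from Lemma~\ref{preserv-cone} with $c_0=2C_0$. I define $T^{*}$ as the supremum of $\tau\in[0,T')$ such that simultaneously (a) $R_{g(t)}+I\in\tilde C(b,s_0)$ on $B_{g(t)}(p,2)$ for all $t\in[0,\tau]$, and (b) $|R_{g(t)}|\leq 2C_0\,t^{-1}$ on $B_{g(t)}(p,2)$ for all $t\in(0,\tau]$. Since $I$ lies in the interior of $\tilde C(b,s_0)$ by Proposition~\ref{properties-cone-2} and $|R_{g_0}|$ is bounded near $p$, both conditions hold for some $\tau>0$, whence $T^{*}>0$. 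Supposing $T^{*}<T\wedge T'$ with $T:=\min(S_0,S_1)$, for $t\in(0,T^{*}]$ condition (a) lets Proposition~\ref{cur-estimate} (applied, after a parabolic rescaling to unit radius, at each $x_0\in B_{g(t)}(p,1)$) upgrade (b) to the sharper bound $|R_{g(t)}|\leq C_0\,t^{-1}$ on $B_{g(t)}(p,1)$. In turn, Lemma~\ref{preserv-cone}, applied with $c_0=2C_0$ at each $x_0\in B_{g_0}(p,2)$ (whose unit $g_0$-ball is contained in $B_{g_0}(p,3)\Subset M$), propagates the cone condition at $x_0$ up to time $S_0$. These strictly improve both (a) and (b); by smooth time-dependence of $g(t)$ one can then enlarge $\tau$ slightly, contradicting the maximality of $T^{*}$. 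Hence $T^{*}\geq T\wedge T'$.

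It remains to deal with the case $T'<T$. In that case (b) holds up to $T^{*}=T'$, so the curvature stays uniformly bounded on a slightly smaller ball as $t\nearrow T'$; applying Shi's local existence at a time $t_0$ just below $T'$ extends the flow past $T'$, contradicting the maximality of $T'$. Therefore $T'\geq T$, and restricting to $B_{g_0}(p,1)\times [0,T]$ yields the required (i) and (ii) with $a_0=C_0$. The main technical difficulty lies in the geometric bookkeeping needed to apply the lemma and proposition at non-central points, since the former is stated in terms of $g_0$-balls and the latter in terms of $g(t)$-balls: one must use the $2C_0/t$ curvature bound to control the distance distortion along the flow, ensuring a sufficient buffer between the various radii $1,2,3,4$ appearing in the hypotheses.
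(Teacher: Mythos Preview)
Your bootstrap scheme has a genuine gap in both the $T^*$ step and the $T'$ extension step, and it is the same gap in each case: every time you invoke one of the a priori estimates or the short-time existence result, the conclusion lives on a \emph{strictly smaller} spatial domain than the hypothesis, and your continuity argument ignores this shrinkage.

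Concretely: you define $T^*$ by requiring (a) and (b) on $B_{g(t)}(p,2)$, but Proposition~\ref{cur-estimate} applied at $x_0\in B_{g(t)}(p,1)$ only returns the improved bound $C_0/t$ on $B_{g(t)}(p,1)$, not on $B_{g(t)}(p,2)$; this does not let you push (b) past $T^*$ on the ball of radius $2$. Likewise, if $T'<T$ you say the curvature bound on the inner ball lets you extend the flow, but Hochard/Shi applied at time $t_0<T'$ with curvature bounded on a radius-$2$ ball produces a flow only on a smaller ball, not on $B_{g_0}(p,3)$; there is no contradiction to the maximality of $T'$, which was defined for the flow on $B_{g_0}(p,3)$. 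Nothing prevents $T'$ from being of order $\rho^2$ with $\rho^{-2}\sim\sup|R_{g_0}|$, which is not uniform.

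The paper's proof confronts exactly this domain loss by running a discrete iteration rather than a continuity argument. One fixes constants so that at step $k$ the flow lives on $B_{g_0}(p,r_k)\times[0,t_k]$ with $|R|\le a_0/t$; then Lemma~\ref{preserv-cone} and Proposition~\ref{cur-estimate} (after rescaling) upgrade the bound to $C_0/t$ on a ball of radius $r_k-\Lambda_0\sqrt{a_0 t_k}$, and Hochard's result applied at time $t_k$ with this \emph{improved} bound extends to time $t_{k+1}=(1+\mu)^2 t_k$ on the smaller ball $B_{g_0}(p,r_{k+1})$. Because the $t_k$ grow geometrically, the accumulated radius loss $\sum_k(r_k-r_{k+1})=\Lambda_0\sqrt{a_0}\sum_k\sqrt{t_k}$ is a convergent geometric series; one then reads off a uniform lower bound for the first $t_i$ at which $r_i$ drops below $2$. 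This bookkeeping of shrinking balls against growing time is the essential mechanism, and it is absent from your argument.
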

\medskip

We will need several ingredients for the proof of Theorem~\ref{thm:local-RF}. The first of these is a result of Hochard that allows us to construct a local Ricci flow on regions with bounded curvature by modifying an incomplete Riemannian metric at its extremities in order to make it complete, without increasing the curvature too much, and without changing it in the interior.

\begin{prop}[Proposition 4.2 in \cite{LeeTopping2022}, based on \cite{hochard_thesis}]\label{construction-local-RF-new}
For $n\geq 2$ there exist constants $\alpha\in (0,1]$ and $\Lambda>1$ depending on $n$ so that the following is true.
Suppose $(N^n,h_0)$ is a smooth manifold (not necessarily complete)
that satisfies $|R_{h_0}|\leq \rho^{-2}$ throughout, for some $\rho>0$.
Then there exists a smooth Ricci flow $h(t)$ on $N$
for $t\in [0,\alpha \rho^2]$, with the properties that 
\begin{enumerate}
\item[(i)] $h(0)=h_0$ on $N_\rho=\{x\in N: B_{h_0}(x,\rho)\Subset N\}$;
\item[(ii)] $|R_{h(t)}|\leq  \Lambda \rho^{-2}$ throughout $N\times [0,\alpha \rho^2]$.
\end{enumerate}
\end{prop}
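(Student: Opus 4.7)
The strategy is to reduce the statement to Shi's short-time existence theorem, which requires a complete initial metric with globally bounded curvature. Since $(N,h_0)$ need not be complete, the plan is to construct a complete extension $(\tilde N,\tilde h_0)$ of $(N,h_0)$ whose curvature is still bounded by a dimensional multiple of $\rho^{-2}$, run Ricci flow on $\tilde N$, and then restrict back to $N$. By parabolic rescaling we may assume throughout that $\rho=1$.

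The main geometric ingredient is an extension procedure due to Hochard: we build a smooth manifold $\tilde N$ containing (a neighbourhood of) $N_1$ as an open subset, together with a smooth complete Riemannian metric $\tilde h_0$ on $\tilde N$ such that $\tilde h_0\equiv h_0$ on $N_1$ and $|R_{\tilde h_0}|\leq K(n)$ everywhere. The idea is to identify the places where $(N,h_0)$ fails to be locally precompact, that is, points close to the ``boundary at infinity'' of $N$, and to smoothly interpolate $h_0$ on a collar of such a region with a model complete metric (for example a cylindrical or conical cap). The interpolation is carried out over a length scale comparable to $\rho=1$, and uses a cut-off in a conformal factor, so that the extra curvature introduced by the interpolation is controlled by a dimensional constant. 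The model metric itself is chosen to have curvature bounded by a dimensional constant.

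Once the extension is in place, Shi's theorem provides a smooth complete Ricci flow $\tilde h(t)$ on $\tilde N\times[0,\alpha]$ for some $\alpha\in(0,1]$ depending only on $n$, together with a bound $|R_{\tilde h(t)}|\leq \Lambda(n)$ throughout, by Shi's derivative estimates applied to an initial metric with $|R_{\tilde h_0}|\leq K(n)$. Setting $h(t):=\tilde h(t)|_N$ yields the desired local Ricci flow: property (i) follows from $\tilde h_0=h_0$ on $N_1$ (which, after rescaling, is $N_\rho$), and property (ii) is inherited from the uniform curvature bound on $\tilde N$. Undoing the rescaling gives the lifespan $\alpha\rho^2$ and bound $\Lambda\rho^{-2}$ stated in the proposition.

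The principal obstacle is the construction of $(\tilde N,\tilde h_0)$: one must cap off every incomplete end of $(N,h_0)$ in a way that is smooth, complete, leaves $h_0$ untouched on $N_\rho$, and only increases the curvature by a controlled dimensional factor, even though the geometry of the ends may be arbitrary subject to the initial curvature bound. The detailed construction is that of \cite{hochard_thesis}, recorded in the form we need as \cite[Proposition 4.2]{LeeTopping2022}, to which we appeal.
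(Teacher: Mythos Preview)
The paper does not prove this proposition; it is quoted verbatim from \cite[Proposition~4.2]{LeeTopping2022}, which in turn records Hochard's construction \cite{hochard_thesis}. So there is no ``paper's own proof'' to compare against beyond the citation, and your final paragraph, appealing to those references, is exactly what the paper does.

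Your outline of the strategy (reduce to $\rho=1$, produce a complete metric with curvature bounded by a dimensional constant, invoke Shi's short-time existence, restrict) is correct. One point of inaccuracy in your description of the construction: Hochard does \emph{not} enlarge the manifold and glue on model caps. Rather, $\tilde N=N$ as a smooth manifold, and $\tilde h_0$ is obtained from $h_0$ by a conformal change $\tilde h_0=e^{2\varphi}h_0$, where $\varphi$ vanishes on $N_\rho$ and blows up near the ``boundary'' of $N$ so that $(N,\tilde h_0)$ becomes complete. The curvature bound for $\tilde h_0$ comes from controlling the first and second derivatives of $\varphi$, which is possible because $\varphi$ is built from a regularised distance-to-boundary function on a scale $\rho$, and the original curvature is bounded by $\rho^{-2}$. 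This is why the resulting flow lives on all of $N$ (as the proposition asserts) with $h(0)=h_0$ only on $N_\rho$; your version, with $N\subsetneq\tilde N$ and gluing on caps, would face the additional difficulty of producing the ambient manifold $\tilde N$ in the first place.
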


We also recall the shrinking balls lemma, which is one of the local ball inclusion results  based on  the distance distortion estimates of Hamilton and Perelman from  \cite[Lemma 8.3]{Perelman2002}.
\begin{lma}[{\cite[Corollary 3.3]{SimonTopping2016}}]
\label{l-balls}
For $n\geq 2$ there exists a constant $\beta\geq 1$ depending only on $n$ such that the following is true. Suppose $(N^n,g(t))$ is a Ricci flow for $t\in [0,S]$ and $x_0\in N$   with $B_{g(0)}(x_0,r)\Subset N$ for some $r>0$, and $\Ric_{g(t)}\leq a/t$ on $B_{g_0}(x_0,r)$ for each $t\in (0,S]$. Then
$$B_{g(t)}\left(x_0,r-\beta\sqrt{a t}\right)\subset B_{g_0}(x_0,r).$$
\end{lma}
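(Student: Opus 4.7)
The plan is to reduce the inclusion to controlling how quickly the distance $d_{g(t)}(x_0,y)$ can shrink for a fixed point $y$, and then to apply Perelman's distance distortion estimate at a carefully chosen, time-dependent scale. The desired conclusion is equivalent to the assertion that for every $y\in N$ with $d_{g_0}(x_0,y)\geq r$ one has $d_{g(t)}(x_0,y)\geq r-\beta\sqrt{at}$. Thus it suffices to show that the auxiliary quantity
$$u(t):=d_{g(t)}(x_0,y)+\beta\sqrt{at}$$
is non-decreasing in $t$ in a suitable barrier sense, for an appropriate dimensional constant $\beta$.

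The key analytic input is Perelman's bound (\cite[Lemma 8.3]{Perelman2002}): if $\Ric_{g(t)}\leq K$ on $B_{g(t)}(x_0,r_0)\cup B_{g(t)}(y,r_0)$ and $d_{g(t)}(x_0,y)\geq 2r_0$, then in the barrier sense
$$\frac{d^-}{dt}d_{g(t)}(x_0,y)\geq -\tfrac{2(n-1)}{3}\bigl(Kr_0+r_0^{-1}\bigr).$$
With $K=a/t$ I would pick $r_0(t):=\sqrt{t/a}$, which minimises the right-hand side and yields
$$\frac{d^-}{dt}d_{g(t)}(x_0,y)\geq -\tfrac{4(n-1)}{3}\sqrt{a/t}.$$
Since $\tfrac{d}{dt}\bigl(\beta\sqrt{at}\bigr)=\tfrac{\beta}{2}\sqrt{a/t}$, choosing $\beta:=\tfrac{8(n-1)}{3}$ (or any larger dimensional constant) makes $\tfrac{d^-}{dt}u(t)\geq 0$ whenever Perelman's hypotheses hold. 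The standard barrier maximum principle then gives $u(t)\geq u(0)\geq r$, which is the required lower bound on $d_{g(t)}(x_0,y)$.

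The main obstacle is to verify the hypotheses of Perelman's estimate throughout the interval: one needs $B_{g(t)}(x_0,r_0(t))$ and $B_{g(t)}(y,r_0(t))$ to lie inside $B_{g_0}(x_0,r)$ so that the curvature bound applies, and one needs $d_{g(t)}(x_0,y)\geq 2r_0(t)$. I would resolve this with a continuity argument. Let $\mathcal{T}\subset[0,S]$ be the set of $\tau$ for which the inclusion $B_{g(t)}(x_0,r-\beta\sqrt{at})\subset B_{g_0}(x_0,r)$ holds for every $t\in[0,\tau]$. Then $0\in\mathcal{T}$ trivially, $\mathcal{T}$ is closed by continuity of $(x,t)\mapsto d_{g(t)}(x_0,x)$ on compact sets, and openness follows from the barrier estimate above: at any $\tau\in\mathcal{T}$ the inclusion already established supplies enough room to place the Perelman balls inside $B_{g_0}(x_0,r)$, allowing us to extend $u(t)\geq r$ a little past $\tau$. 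Regimes in which the conclusion is vacuous ($r-\beta\sqrt{at}\leq 0$) or in which $d_{g(t)}(x_0,y)<2r_0(t)$ can be handled separately by enlarging $\beta$ slightly to absorb the extra $2\sqrt{t/a}$ term, which is itself of the form $C\sqrt{at}/a$ and hence fits in the same overall scaling.
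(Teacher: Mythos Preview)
The paper does not give its own proof of this lemma; it is quoted directly from \cite{SimonTopping2016}. Your overall strategy---integrate Perelman's distance-distortion bound at the time-dependent scale $r_0(t)=\sqrt{t/a}$ and run a continuity argument---is the standard one and is essentially how the result is established in that reference.

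There is, however, a genuine gap in your write-up. You invoke the \emph{two-sided} form of \cite[Lemma~8.3]{Perelman2002}, which requires the Ricci bound on $B_{g(t)}(x_0,r_0)\cup B_{g(t)}(y,r_0)$. But you have fixed $y$ with $d_{g_0}(x_0,y)\ge r$, so $y\notin B_{g_0}(x_0,r)$, and hence $B_{g(t)}(y,r_0)$ can \emph{never} be contained in the region where the hypothesis supplies a curvature bound. Your continuity set $\mathcal{T}$ only controls $g(t)$-balls centred at $x_0$; it says nothing about balls centred at $y$, so the sentence ``the inclusion already established supplies enough room to place the Perelman balls inside $B_{g_0}(x_0,r)$'' is simply false for the second ball.

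The remedy is to use the \emph{one-sided} version of Perelman's estimate, which only requires $\Ric_{g(t)}\le (n-1)K$ on $B_{g(t)}(x_0,r_0)$ and $d_{g(t)}(x_0,y)\ge r_0$, and still yields a lower barrier $\tfrac{d^-}{dt}d_{g(t)}(x_0,y)\ge -(n-1)\bigl(\tfrac{2}{3}Kr_0+r_0^{-1}\bigr)$. (This is precisely the form employed later in the present paper, in the proof of Lemma~\ref{improving-cone}.) With that version only the ball around $x_0$ needs to sit inside $B_{g_0}(x_0,r)$, which your continuity argument does control, and the rest of your proof goes through with a possibly different dimensional constant $\beta$.
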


We are now in a position to prove Theorem~\ref{thm:local-RF}.
We stay as close as possible to the proof of \cite[Theorem 5.1]{LeeTopping2022}.

\begin{proof}[Proof of Theorem~\ref{thm:local-RF}]
We start by specifying the positive constants that will be used in the construction.
\begin{itemize}
\item $\Lambda(n)>1$ and $\a(n)$ from Proposition~\ref{construction-local-RF-new};
\item $\b(n)$ from Lemma~\ref{l-balls};
\item $C_0(n,b,s_0)$ from Proposition~\ref{cur-estimate};
\item $a_0(n,b,s_0)=\max\{1, \a\Lambda,\Lambda(\a+C_0) \}$;
\item $S_0(n,a_0,b,s_0)$ from Lemma~\ref{preserv-cone}; 
\item $S_1(n,b,s_0)$ from Proposition~\ref{cur-estimate};
\item $\Lambda_0(n,b,s_0) :=\max\{ 4 S_1^{-1/2}a_0^{-1/2}, 4 S_0^{-1/2}a_0^{-1/2},8\b, 8C_0^{-1/2} a_0^{-1/2}\}$
\item $\mu(n,b,s_0):=\sqrt{1+\a C_0^{-1}} -1 >0$
\end{itemize}

\medskip

Choose $\rho\in (0,1)$ sufficiently small so that for all $x\in B_{g_0}(p,4)$, we have 
$|R_{g_0}|\leq \rho^{-2}$. 
We have no uniform positive lower bound for $\rho$.
Apply Proposition~\ref{construction-local-RF-new} with $N=B_{g_0}(p,4)$ to find a smooth solution $g(t)$ to the Ricci flow defined on $B_{g_0}(p,3)\times [0,\a \rho^2]$ with 
$|R_{g(t)}|\leq \Lambda \rho^{-2}$ and $g(0)=g_0$ on $B_{g_0}(p,3)$.  In particular, for all $(x,t)\in B_{g_0}(p,3)\times (0,\a \rho^{2}]$, we have
$$|R_{g(t)}|\leq \Lambda \rho^{-2} \leq a_0 t^{-1}.$$

Now define sequences of times $t_k$ and radii $r_k$ inductively as follows: 
\begin{enumerate}
\item[(a)] $t_1=\a \rho^2$, $r_1=3$ where $\rho$ is obtained from above;
\item[(b)] $t_{k+1}= (1+\mu)^2 t_k$ for all $k\geq 1$;
\item[(c)] $r_{k+1}=r_k -\Lambda_0 \sqrt{a_0t_k}$ for all $k\geq 1$.
\end{enumerate}

Consider the following statement:

\medskip
${\mathcal{P}(k)}$:
There exists a smooth Ricci flow solution $g(t)$ defined on $B_{g_0}(p,r_k)\times [0,t_k]$ with $g(0)=g_0$ such that 
$|R_{g(t)}|\leq a_0t^{-1}$.
\medskip

Clearly, $\mathcal{P}(1)$ is true.  We want to show that $\mathcal{P}(k)$ is true for every $k$ while $r_k>0$, and we do so by induction. 

Suppose $\mathcal{P}(k)$ is true for some $k\in \mathbb{N}$, and consider a Ricci flow $g(t)$ that this provides.  We want to show that $\mathcal{P}(k+1)$ is true if $r_{k+1}>0$ by extending $g(t)$ to a longer time interval. 

\medskip

Let $x\in B_{g_0}(p,r_{k+1}+\frac34 \Lambda_0 \sqrt{a_0t_k})$ so that 
$$B_{g_0}\left(x,\frac14 \Lambda_0\sqrt{a_0t_k}\right)\Subset B_{g_0}(p,r_k).$$
Consider the rescaled Ricci flow $\tilde g(t)=\lambda^{-2} g(\lambda^2 t)$ for $t\in [0,\lambda^{-2}t_k]$ where $\lambda=\frac14 \Lambda_0 \sqrt{a_0t_k}$ so that $B_{g(0)}(x,\frac14 \Lambda_0 \sqrt{a_0t_k})=B_{\tilde g(0)}(x,1)$ and $\lambda^{-2}t_k=16\Lambda_0^{-2}a_0^{-1}$. On the rescaled domain, the Ricci flow $\tilde g(t)$ is smooth and satisfies 
\begin{enumerate}
\item[(i)] $R_{\tilde g(0)} \in \tilde C(b,s_0)$.
\item[(ii)] $|R_{\tilde g(t)}|\leq a_0t^{-1}$ on $B_{\tilde g(0)}(x,1)\times (0,16\Lambda_0^{-2} a_0]$.
\end{enumerate}
Applying Lemma~\ref{preserv-cone} to $\tilde g(t)$, we deduce that
$$R_{\tilde g(t)}(x)+I \in  \tilde C(b,s_0) $$
for $t\leq \min\{S_0, 16\Lambda_0^{-2}a_0^{-1}\}= 16\Lambda_0^{-2}a_0^{-1}$ thanks to the choice of $\Lambda_0$. Hence for all $(x,t)\in B_{g_0}(p,r_{k+1}+\frac34 \Lambda_0 \sqrt{a_0t_k})\times [0,t_k]$, we have 
$$R_{ g(t)}+\left(\frac14 \Lambda_0 \sqrt{a_0t_k} \right)^{-2} I \in \tilde C(b,s_0).$$

We now use this to obtain an improved curvature estimate on a slightly smaller ball. For  $x\in \Omega:=B_{g_0}(p,r_{k+1}+\frac14 \Lambda_0 \sqrt{a_0t_k})$, we have 
$$B_{g_0}\left(x, \frac12\Lambda_0 \sqrt{a_0t_k}\right)\Subset B_{g_0}\left(p,r_{k+1}+\frac34 \Lambda_0\sqrt{a_0t_k}\right).$$

By Lemma~\ref{l-balls} and our choice of $\Lambda_0$,  we deduce that 
$$B_{g(t)}\left(x, \frac14\Lambda_0 \sqrt{a_0t_k}\right)\Subset B_{g_0}\left(p,r_{k+1}+\frac34 \Lambda_0\sqrt{a_0t_k}\right).$$

Therefore, the rescaled Ricci flow $\tilde g(t),t\in [0,16\Lambda_0^{-2}a_0^{-1}]$ satisfies
\begin{enumerate}
\item[(I)]$B_{\tilde g(t)}(x,1)\Subset M$; 
\item[(II)] $R_{\tilde g(t)}+I \in \tilde C(b,s_0)$ on $B_{\tilde g(t)}(x,1)$ 
\end{enumerate}
for each $t\in [0,16\Lambda_0^{-2}a_0^{-1}]$,
and hence Proposition~\ref{cur-estimate} applies to show that 
\begin{equation}\label{improve-estimate}
|R_{\tilde g(t)}(x)|\leq C_0t^{-1}
\end{equation}
 for $0<t\leq \min\{S_1,16\Lambda_0^{-2}a_0^{-1}\}=16\Lambda_0^{-2}a_0^{-1}$. Since this estimate is scaling invariant, we have improved the curvature decay of $g(t)$
from $a_0t^{-1}$ to $C_0t^{-1}$
on 
$\Omega\times (0,t_k]$, where we recall that 
$\Omega:=B_{g_0}(p,r_{k+1}+\frac14 \Lambda_0 \sqrt{a_0t_k})$.
 
Now we construct an extension of $g(t)$.  
For $h_0=g(t_k)$,
\eqref{improve-estimate} implies $\sup_{\Omega}|R_{h_0}|\leq \rho_0^{-2}$ where $\rho_0=\sqrt{C_0^{-1}t_k}$. 
Note that by definition of $\Lambda_0$, we have $\rho_0\leq \frac18 \sqrt{a_0 t_k}\Lambda_0$.
Moreover, for $x\in B_{g_0}(p,r_{k+1})$, Lemma~\ref{l-balls} 
(using only the original $a_0t^{-1}$ curvature decay rather than the refined $C_0t^{-1}$ decay)
and the choice of $\Lambda_0$ imply
 \begin{equation}
 \begin{split}
 B_{g(t_k)}(x,\rho_0) 
 &\subset B_{g_0}(x,\rho_0+\b\sqrt{a_0 t_k})\\
 &\subset B_{g_0}\left(x,\frac14 \Lambda_0\sqrt{a_0t_k}\right) \Subset \Omega.
 \end{split}
 \end{equation}
This shows that $B_{g_0}(p,r_{k+1})\subset \Omega_{\rho_0}$ where 
$\Omega_{\rho_0}$,
as in Proposition \ref{construction-local-RF-new},
is computed using $h_0$. By applying Proposition~\ref{construction-local-RF-new}, we find a Ricci flow $g(t)$ on $B_{g_0}(p,r_{k+1})\times 
[t_k,t_{k+1}]$, 
extending $g(t)$ on this smaller ball, with $t_{k+1}=t_k+\a \rho_0^2=(1+\mu)^2t_k$ and 
\begin{equation}
|R_{g(t)}|\leq \Lambda \rho_0^{-2} =\Lambda C_0 t_k^{-1} 
\leq a_0 t_{k+1}^{-1}  \leq a_0 t^{-1}
\end{equation}
thanks to the choice of $a_0$. This shows that $\mathcal{P}(k+1)$ is true if $r_{k+1}>0$. 
\medskip

Since $\lim_{k\to +\infty} r_k =-\infty$ and $r_1=3$, there exists $i\in\mathbb{N}$ such that $r_i\geq 2$ and $r_{i+1}<2$. Since $\mathcal{P}(i)$ holds, we now wish to estimate the corresponding $t_i$ from below. 
\begin{equation}
\begin{split}
2> r_{i+1} &= 3-\Lambda_0 \sqrt{a_0} \cdot \sum_{k=1}^i \sqrt{t_k}\\
&\geq  3-\Lambda_0 \sqrt{a_0t_i} \cdot \sum_{k=0}^\infty (1+\mu)^{-k}\\
&= 3-\sqrt{t_i} \cdot \frac{\Lambda_0 \sqrt{a_0}(1+\mu)}{\mu}
\end{split}
\end{equation}
which implies 
\begin{equation}
t_i > \frac{\mu^2 }{a_0 \Lambda_0^2(1+\mu)^2 }=:T(n,b,s_0).
\end{equation}

This shows that there exists a smooth Ricci flow solution $g(t)$ on $B_{g_0}(p,2)\times [0,T]$ such that $g(0)=g_0$ and $|R_{g(t)}|\leq a_0t^{-1}$.  The conclusion on pinching follows from applying Lemma~\ref{preserv-cone} on $B_{g_0}(x,1)$ where 
$x\in B_{g_0}(p,1)$ provided that we shrink $T$ further if necessary. This completes the proof.
\end{proof}

\medskip

We can now establish the existence of a Ricci flow on $M\times [0,+\infty)$ 
as claimed in Theorem \ref{Thm:existence},
using Theorem~\ref{thm:local-RF}.

\begin{proof}[Proof of Theorem~\ref{Thm:existence}]
By Lemma~\ref{pinching-implies-cone}, 
the pinching hypothesis \eqref{e0_pinching_hyp} implies that 
$R_{g_0}\in \tilde C(b,s_0)$ for some $s_0>0$ and $b\in (0,\Upsilon_n)$
depending only on $n$ and $\e_0$.

Fix $p\in M$. Pick $R_i\to +\infty$ and denote $h_{i,0}=R_i^{-2}g_0$.
Then
$R_{h_{i,0}} \in \tilde C(b,s_0)$ for all $i$. We  apply Theorem~\ref{thm:local-RF} to $h_{i,0}$ to obtain a Ricci flow solution $h_i(t)$ on $B_{h_{i,0}}(p,1)\times [0,T]$ with 
\begin{enumerate}
\item[(i)] $|R_{h_i(t)}|\leq a_0t^{-1}$;
\item [(ii)] $R_{h_i(t)}+I \in \tilde C(b,s_0)$
\end{enumerate}
for some $a_0(n,s_0),T(n,s_0)>0$. Define the rescaled Ricci flow solution $g_i(t)=R_i^2 h_i(R_i^{-2}t)$ on $B_{g_0}(p,R_i)\times [0,TR_i^2]$  with 
\begin{equation}
\left\{
\begin{array}{ll}
g_i(0)=g_0; \\
|R_{g_i(t)}|\leq a_0 t^{-1};\\
R_{g_i(t)}+R_i^{-2}I \in \tilde C(b,s_0)
\end{array}
\right.
\end{equation}
on each $B_{g_0}(p,R_i)\times (0,TR_i^2]$.

By \cite[Corollary 3.2]{Chen2009} (see also \cite{Simon2008}) and a modification of Shi's higher order estimate given in \cite[Theorem 14.16]{ChowBookII}, we deduce that for any $k\in \mathbb{N}$, $S>0$ and $\Omega\Subset M$, there exists $C(k,\Omega,g_0,a_0,S)>0$ so that for sufficiently large $i$ we have
\begin{align}
\sup_{\Omega\times [0,S]}|\nabla^k R_{g_i(t)}|\leq C(k,\Omega,g_0,a_0,S).
\end{align}
By applying the Ascoli-Arzel\`a theorem in coordinate charts, 
a subsequence converges to a smooth solution $g(t)=\lim_{i\rightarrow +\infty}g_i(t)$ of the Ricci flow on $M\times [0,+\infty)$ so that $g(0)=g_0$, $|R_{g(t)}|\leq a_0t^{-1}$ and
\begin{equation}
R_{g(t)}\in  \tilde C(b,s_0) 
\end{equation}
throughout $M\times[0,+\infty)$.
This implies the pinching conclusion (a) of the theorem by Lemma~\ref{lma:linear-algebra},
because $\tilde C(b,s_0)\subset \ell_{b}(\CPICo)$. 
Moreover, since $(M,g_0)$ is complete, $g(t)$  is a complete solution by Lemma \ref{l-balls}.    This completes the proof.
\end{proof}

Now we are ready to prove the main Theorem \ref{main-thm-updated}, which we restate in the following equivalent form.
\begin{thm} \label{pinching_was_cor}
Suppose, for $n\geq 4$, that $(M^n,g_0)$ is a complete non-compact manifold such that
\begin{enumerate}
\item[(i)] $R_{g_0}-\e_0\, \mathrm{scal}(R_{g_0}) \cdot I\in \CPICo$ for some $\e_0\in (0,1)$;
\item[(ii)] $R_{g_0}\in \CPICt$,
\end{enumerate}
then $(M,g_0)$ is flat.
\end{thm}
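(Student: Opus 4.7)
The plan is to assume for contradiction that $(M^n,g_0)$ is non-compact, non-flat, and satisfies the two hypotheses, and then to derive a contradiction by studying the long-time behaviour of a Ricci flow emanating from $g_0$.

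First, I would apply Theorem \ref{Thm:existence} to produce a smooth complete Ricci flow $g(t)$ on $M\times [0,+\infty)$ satisfying the decay $|R_{g(t)}|\leq a_0 t^{-1}$ and the pinching $R_{g(t)}-\e_0'\,\mathrm{scal}(R_{g(t)})\cdot I\in\CPICo$ for all $t>0$, where $\e_0'>0$ depends only on $n$ and $\e_0$. Because $\CPICt$ is invariant under the Hamilton ODE (and hence preserved along Ricci flows, at least in the regime where one can apply the relevant ODE--PDE arguments such as Theorem \ref{localODEPDEthm}), the condition $R_{g(t)}\in\CPICt$ should also persist for all $t>0$. Since PIC1 pinching automatically yields $\Ric_{g(t)}\geq 0$, Bishop--Gromov gives a finite asymptotic volume ratio $\mathrm{AVR}(g(t))\in [0,\omega_n]$, and one should verify that $\mathrm{AVR}$ is preserved under the flow.

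Next, I would carry out a parabolic blow-down. Set $g_\lambda(t):=\lambda^{-1}g(\lambda t)$ for a sequence $\lambda=\lambda_i\to\infty$, fixing a basepoint $p\in M$. The curvature decay, the $\CPICt$ condition, and the pinching are all scale-invariant, so they persist under this rescaling. By the uniform $a_0 t^{-1}$ curvature bound and the non-negative Ricci curvature, Hamilton's compactness theorem (or a pointed Cheeger--Gromov extraction, after checking non-collapsing at the scale $\sqrt{t}$) produces a complete limiting Ricci flow $(M_\infty,g_\infty(t),p_\infty)$ on $(0,\infty)$ satisfying $|R_{g_\infty(t)}|\leq a_0 t^{-1}$, the same scale-invariant pinching, and $R_{g_\infty(t)}\in\CPICt$. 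Hamilton's Harnack inequality (available because of the $\CPICt$ assumption, following Brendle) together with the Type III nature of the flow then forces $g_\infty(t)$ to be an expanding gradient Ricci soliton.

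The rigidity step is to show that a non-flat, PIC2, PIC1-pinched expanding gradient soliton of the sort produced above must be the Gaussian soliton on $\R^n$; this uses the strict inequality coming from pinching combined with the soliton identity $\Ric+\nabla^2 f=-\tfrac{1}{2t}g$ and the classification of rigid expanders with $\Ric\geq 0$, in the spirit of Chen--Zhu and Deruelle. Once the limit is known to be flat Euclidean space, this means $\mathrm{AVR}(g_0)=\omega_n$, and Bishop--Gromov rigidity (or Cheeger--Colding) forces $(M,g_0)$ to be isometric to $\R^n$, contradicting non-flatness.

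The main obstacle will be two-fold. First, running the blow-down compactness argument in the possibly collapsing setting: unlike the classical Brendle--Schoen regime, we have no a priori injectivity radius lower bound, so one needs either to rule out collapsing using the PIC1 pinching (perhaps via a Perelman-type non-collapsing argument tied to the non-decreasing $\mathrm{AVR}$, or via a local lift as in Section \ref{apriori_sect}), or to work equivariantly on universal covers. Second, as noted in Remark \ref{PICt_PICo_rmk}, the PIC2 hypothesis (not merely PIC1) is essential for the blow-down step, since one needs the Harnack-type machinery and soliton rigidity that is currently available only inside $\CPICt$; weakening to PIC1 here would leave negative complex sectional curvatures in the limit and break the argument.
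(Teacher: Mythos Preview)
Your overall strategy matches the paper's almost exactly: run the Ricci flow from Theorem~\ref{Thm:existence}, keep both the PIC1 pinching and the $\CPICt$ condition along the flow, blow down parabolically, use Brendle's Harnack (via the $\CPICt$ condition) together with Schulze--Simon to extract an expanding gradient soliton, show this soliton is flat by Ricci pinching (the paper invokes \cite[Corollary~3.1]{Ni2005} rather than Chen--Zhu or Deruelle), and conclude $\mathrm{AVR}(g_0)=1$, forcing flatness by Bishop--Gromov rigidity.

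The genuine gap is precisely the point you flag as the ``main obstacle'': you do not resolve the non-collapsing needed to take the Cheeger--Gromov blow-down limit, and the mechanisms you suggest (Perelman-style non-collapsing tied to $\mathrm{AVR}$, local lifts, or working equivariantly on covers) are not how the paper proceeds. The paper first passes to the universal cover and then proves a substantial \emph{Claim}: for every $t>0$ the \emph{real} sectional curvatures satisfy $K(g(t))>0$. Once this is known, the Gromoll--Meyer injectivity radius estimate gives $\mathrm{inj}(g(t))\geq c_0\sqrt{t}$, which in turn yields $\mathrm{AVR}(g_0)>0$ via G\"unther and the shrinking-balls lemma, and supplies the non-collapsing required for Hamilton compactness in the blow-down.

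The proof of the Claim is the most intricate part and is where the $\CPICt$ hypothesis is used structurally, not just for Harnack. Since $R_{g(t)}\in\CPICt$, the Cabezas-Rivas--Wilking soul theorem splits $(M,g(t_0))$ as $\Sigma^k\times F^{n-k}$; strict PIC1 (from the pinching and $\mathrm{scal}>0$) forbids nontrivial products by Lemma~\ref{prod_lem}, so $k=0$ and $M\cong\mathbb{R}^n$. Irreducibility and non-symmetry are argued similarly, and then Berger's holonomy classification reduces to $\mathrm{Hol}=\mathrm{SO}(n)$ or $\mathrm{U}(n/2)$. In the $\mathrm{SO}(n)$ case one zero real sectional curvature propagates by the strong maximum principle \cite[Proposition~9]{BrendleSchoen2008} to force flatness, contradicting $\mathrm{scal}>0$. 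The K\"ahler case $\mathrm{U}(n/2)$ requires a further argument (as in \cite[Corollary~7.6]{CabezasWilking2015}) showing that a zero real sectional curvature forces the orthogonal bisectional curvature to vanish, again contradicting $\mathrm{scal}>0$. This holonomy-and-strong-maximum-principle argument is the missing idea in your proposal.
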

Before we begin the proof, we observe the following basic local fact that we will need more than once.
\begin{lma}
\label{prod_lem}
A manifold of dimension at least three that splits isometrically  into 
a nontrivial product cannot be PIC1.
\end{lma}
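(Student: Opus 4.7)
The plan is to show that the curvature tensor of a nontrivial isometric product cannot lie in the interior of $\CPICo$---rather, there is always a tangent configuration at which it sits on the boundary. Since the paper provides two different concrete descriptions of $\CPICo$ depending on whether $n\geq 4$ (four-frame inequality) or $n=3$ (non-negative Ricci), the argument naturally splits into these two cases. The essential geometric input in both cases is that the Riemann tensor of a product vanishes on any quadruple of vectors that are not all contained in a single factor, so ``mixed sectional curvatures'' are zero.

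For $n\geq 4$: I would assume $M=M_1\times M_2$ with both $\dim M_i\geq 1$ and, without loss of generality, $\dim M_2\geq 2$. At an arbitrary base point, pick orthonormal $e_1,e_2\in TM_2$ and unit $e_3\in TM_1$, then choose $e_4$ orthonormal to the previous three. This is always possible when $n\geq 4$: put $e_4\in TM_2$ orthogonal to $\{e_1,e_2\}$ if $\dim M_2\geq 3$; otherwise $\dim M_2=2$ forces $\dim M_1\geq 2$ and we take $e_4\in TM_1$ orthogonal to $e_3$. By the vanishing of mixed curvatures one has $R_{1313}=R_{2323}=0$ and $R_{1234}=0$, so the four-frame inequality defining $\CPICo$ collapses at $\lambda=0$ to $R_{1313}+R_{2323}\geq 0$ holding with equality. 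Thus $R$ lies in $\partial\CPICo$, so $M$ is not strictly PIC1.

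For $n=3$: the paper has already remarked that $\CPICo$ coincides in this dimension with the cone of non-negative Ricci tensors, so the strict condition reduces to strictly positive Ricci curvature. A nontrivial splitting of a three-manifold forces one factor, say $M_1$, to be one-dimensional, and for a unit $u\in TM_1$ every plane through $u$ is a mixed section and thus has zero sectional curvature; summing gives $\mathrm{Ric}(u,u)=0$. Again $R$ sits on the boundary of the relevant cone, so $M$ is not strictly PIC1.

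The main (minor) obstacle is simply the bookkeeping to ensure the four-frame can always be assembled in the various sub-cases of $n\geq 4$, and reconciling the two different characterisations of $\CPICo$ across $n=3$ versus $n\geq 4$; once organised, the argument reduces to the trivial observation that mixed sectional curvatures vanish in a product.
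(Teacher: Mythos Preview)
Your argument is correct. The four-frame case $n\geq 4$ with $\lambda=0$ yields $R_{1313}+R_{2323}=0$, and the $n=3$ case via the Ricci characterisation is fine too.

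The paper takes a slightly different, more uniform route: it works directly with the complex-sectional-curvature definition of $\CPICo$ (valid for all $n\geq 3$) rather than the four-frame inequality. Choosing a unit $v\in T_xM_1$ and orthonormal $e_1,e_2\in T_yM_2$, the section spanned by $v$ and $\frac{1}{\sqrt 2}(e_1+ie_2)$ is a PIC1 section whose complex sectional curvature is $\frac12\big(R(v,e_1,v,e_1)+R(v,e_2,v,e_2)\big)=0$. This avoids both the case split at $n=3$ and the need to manufacture the auxiliary vector $e_4$. In fact your $\lambda=0$ computation is exactly this same zero complex sectional curvature, just reached through the four-frame description; the only cost of your framing is the extra bookkeeping you identified.
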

To clarify, being PIC1 means that the curvature tensor lies in the interior of \CPICo, or alternatively that the complex sectional curvatures corresponding to  PIC1 sections 
are strictly positive. 
\begin{proof}
Consider a manifold $M_1\times M_2$, where $\mathrm{dim}(M_1)\geq 1$
and $\mathrm{dim}(M_2)\geq 2$. Pick a unit vector $v$ in some tangent 
space $T_x M_1$ and two orthonormal vectors $e_1,e_2$ in a tangent space $T_y M_2$. Then the complex vectors $v$ and $\frac{1}{\sqrt{2}}(e_1+ie_2)$ span a PIC1 section with complex sectional curvature
$$\textstyle
R(v,\frac1{\sqrt{2}}(e_1+ie_2), v, \frac1{\sqrt{2}}(e_1-ie_2))
=\frac12(R(v,e_1,v,e_1)+R(v,e_2,v,e_2))=0,$$
which is not strictly positive.
\end{proof}

\begin{proof}[Proof of Theorem \ref{pinching_was_cor}]
By working on the universal cover, we may assume $M$ to be simply connected. Suppose on the contrary, we have $\mathrm{scal}(R_{g_0}(p))>0$ for some $p\in M$. By Theorem~\ref{Thm:existence}, we can find a long-time solution $g(t)$ to the Ricci flow on $M\times[0,+\infty)$ such that 

\begin{enumerate}
\item[(a)] $R_{g(t)}-\e_0' \mathrm{scal}(R_{g(t)}) \cdot I\in \CPICo$ for some  $\e_0'\in (0,\frac{1}{n(n-1)})$;
\item[(b)] $|R_{g(t)}|\leq a_0 t^{-1}$ for some $a_0>0$.
\end{enumerate}

Moreover, it follows from \cite[Theorem 3.1]{LeeTam2020} that $g(t)$ also satisfies $R_{g(t)}\in \CPICt$ for all $t>0$.  Furthermore,  the strong maximum principle implies that $\mathrm{scal}_{g(t)}>0$ for all $t>0$. 
\begin{claim}
We have $\mathrm{K}(g(t))> 0$ for all $t>0$.   That is, all real sectional curvatures are strictly positive for positive times.
\end{claim}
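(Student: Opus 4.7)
The plan is to argue by contradiction: assuming that some real 2-plane has zero sectional curvature at a positive time, apply a strong maximum principle for the invariant cone $\CPICt$ under Ricci flow to obtain a local isometric splitting, and then derive a contradiction from the strict PIC1 character forced by the pinching condition (a) together with Lemma \ref{prod_lem}.

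First I would set up the contradiction. Suppose there exist $t_0>0$, $x_0\in M$ and real orthonormal $v,w\in T_{x_0}M$ with $R_{g(t_0)}(v,w,v,w)=0$. Since $R_{g(t)}\in \CPICt$ for every $t>0$, the real sectional curvature is non-negative throughout $M\times(0,\infty)$, so this vanishing is an interior minimum. Viewing $v,w$ as complex vectors, they span a complex 2-plane $\Sigma$ whose complex sectional curvature equals $R_{g(t_0)}(v,w,\bar v,\bar w)=R_{g(t_0)}(v,w,v,w)=0$. Next I would invoke a strong maximum principle for $\CPICt$-preserving Ricci flow, in the spirit of Hamilton's strong maximum principle for the curvature operator and its extensions to general invariant cones (see e.g.\ \cite{BrendleBook}). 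Such a principle upgrades the pointwise vanishing of complex sectional curvature on a single 2-plane to a parallel isotropic structure on a neighborhood of $x_0$ at time $t_0$; the local de Rham decomposition then yields that a neighborhood $U$ of $x_0$ in $(M,g(t_0))$ splits isometrically as a nontrivial Riemannian product $M_1\times M_2$.

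To conclude, I would exploit the pinching. Since $\mathrm{scal}_{g(t_0)}>0$ everywhere (already noted via the strong maximum principle for scalar curvature) and $I$ lies in the interior of $\CPICo$, the pinching condition $R_{g(t_0)}-\e_0'\,\mathrm{scal}(R_{g(t_0)})\cdot I\in\CPICo$ with $\e_0'>0$ forces $R_{g(t_0)}\in\mathrm{int}(\CPICo)$ at every point, i.e.\ $(M,g(t_0))$ is strictly PIC1. However, the proof of Lemma \ref{prod_lem}, applied at a point of the local splitting $U$, exhibits a PIC1 section with vanishing complex sectional curvature there, placing $R_{g(t_0)}$ on the boundary of $\CPICo$ at that point, in contradiction to strict PIC1. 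The principal obstacle is pinning down the precise version of the strong maximum principle that yields the local splitting directly from vanishing complex sectional curvature on a single 2-plane under a $\CPICt$-preserving Ricci flow; once this is in hand, the rest of the argument is a short formal consequence.
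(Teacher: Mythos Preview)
Your overall contradiction strategy --- strict PIC1 from the pinching (a) together with $\mathrm{scal}>0$, versus a product structure via Lemma~\ref{prod_lem} --- is indeed the same lever the paper pulls. The gap is in your step~4: the passage from a single zero real sectional curvature at $(x_0,t_0)$, together with $R_{g(t)}\in\CPICt$, to a local isometric product splitting is not justified and is in general false. The Brendle--Schoen strong maximum principle (\cite[Proposition~9]{BrendleSchoen2008}) tells you only that the set of complex $2$--planes on which the complex sectional curvature vanishes is invariant under parallel transport; it does \emph{not} produce a parallel sub-bundle of $TM$ or a local de~Rham splitting. Whether a single null real $2$--plane propagates to anything useful depends entirely on the holonomy: if $\mathrm{Hol}=\mathrm{SO}(n)$ one obtains flatness (not a nontrivial product), whereas if $\mathrm{Hol}=\mathrm{U}(n/2)$ the holonomy orbit of a real $2$--plane is much smaller and one must work harder. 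So your ``parallel isotropic structure $\Rightarrow$ local product'' step is precisely the place where the argument would fail.

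The paper closes this gap by first reducing the holonomy before invoking the strong maximum principle. Using the Cabezas-Rivas--Wilking soul theorem together with strict PIC1 and Lemma~\ref{prod_lem}, one gets $M\cong\mathbb{R}^n$; the de~Rham decomposition plus strict PIC1 gives irreducibility; a Bonnet--Myers argument excludes locally symmetric metrics. Berger's classification then forces $\mathrm{Hol}(M,g(t_0))\in\{\mathrm{SO}(n),\,\mathrm{U}(n/2)\}$. Only then is the strong maximum principle applied: in the $\mathrm{SO}(n)$ case transitivity on real $2$--planes yields flatness, contradicting $\mathrm{scal}>0$; in the K\"ahler case $\mathrm{U}(n/2)$ a careful computation with the holonomy orbit shows that any zero real sectional curvature forces the orthogonal bisectional curvature to vanish identically, again contradicting $\mathrm{scal}>0$. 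This holonomy reduction is not an optional refinement; it is what makes the strong maximum principle conclusive. Your proposal, as written, would need exactly this reduction to be completed.
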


\begin{proof}[Proof of Claim]
Fix a time $t_0>0$ at which to consider the sectional curvatures.
Because $R_{g(t_0)}\in \CPICt$,  
Cabezas-Rivas and Wilking 
\cite[Theorem 5.1]{CabezasWilking2015} tell us that $(M,g(t_0))$ splits isometrically as $\Sigma^k\times F^{n-k}$ where $\Sigma$ is the (closed) soul and 
$F$ is diffeomorphic to $\mathbb{R}^{n-k}$ and carries a complete metric $h$ with $R_h\in \CPICt$.  
Because $\Sigma$ is closed, but $M$ is non-compact, we must have 
$k<n$.
By  (a)  and the positivity of the scalar  curvature, 
$(M,g(t_0))$ is (strictly) PIC1, which is incompatible with being a non-trivial product by Lemma \ref{prod_lem}, so 
$k=0$ and hence $M^n$ is diffeomorphic to $\mathbb{R}^n$. 
Similarly, considering the de-Rham decomposition of $M$,  (a)  implies that $(M,g(t_0))$ is irreducible. 
Meanwhile, if $(M,g(t_0))$ is symmetric, then the scalar curvature is a positive constant, which by   (a)  implies a uniform positive lower bound for the Ricci curvature; Bonnet-Myers then forces $M$ to be compact,
which is assumed not to be the case. 
Hence, $(M,g(t_0))$ is of positive scalar curvature, 
diffeomorphic to $\mathbb{R}^n$, non-symmetric and irreducible. In particular, Berger’s holonomy classification theorem implies that $\mathrm{Hol}(M,g(t_0))$ is either $\mathrm{SO}(n)$ or,
possibly if $n$ is even, $\mathrm{U}(n/2)$. This is because all other options would be Ricci flat or Einstein (hence compact).
If $\mathrm{Hol}(M,g(t_0))=\mathrm{SO}(n)$,  it follows from the strong maximum principle argument in \cite[Proposition 9]{BrendleSchoen2008} that $\mathrm{K}(g(t_0))>0$.
Indeed, if any sectional curvature were zero, then every parallel translation of that section would also have zero sectional curvature, and because the holonomy group is $\mathrm{SO}(n)$ the manifold would have to be flat, violating the positivity of the scalar curvature.

 If $\mathrm{Hol}(M,g(t_0))=\mathrm{U}(n/2)$,  we can still deduce $\mathrm{K}(g(t_0))>0$ as pointed out in the proof of \cite[Corollary 7.6]{CabezasWilking2015}. We include the argument for  convenience.  First, we note that $g(t_0)$ is K\"ahler by the holonomy. Suppose $\mathrm{K}(\sigma)=0$ for some $x\in M$ and real plane $\sigma\subset T_xM$.  If $\sigma$ is a complex holomorphic plane, i.e. $\sigma=\mathrm{span}\{ v,Jv\}$ for some $v\in T_xM$, then the strong maximum principle \cite[Proposition 9]{BrendleSchoen2008} implies that the holomorphic sectional curvature of $g(t_0)$ vanishes on $M$ since $\mathrm{Hol}(M,g(t_0))=\mathrm{U}(n/2)$ and hence $g(t_0)$ is flat \cite[Chapter IX, Proposition 7.1]{KNvol2}
which contradicts  the positive scalar curvature.  Suppose now $\sigma=\mathrm{span}\{u,v\}$ for some $u,v\in T_xM$ where $\{u,v\}$ are orthonormal but $Ju\notin \sigma$. We fix an orthonormal frame $\{e_i,Je_i\}_{i=1}^{n/2}$ 
such that defining 
$u_i:=\frac{1}{\sqrt{2}}(e_i-\sqrt{-1}Je_i)$ makes $\{u_i \}_{i=1}^{n/2}$ 
a unitary frame with $u=e_1$ and $v=\cos \theta \cdot Je_1+\sin \theta \cdot e_2$ for some $\theta\in (0,2\pi)$. Then $K(\sigma)=R(u,v,u,v)=0$ is equivalent to 
\begin{equation}
\label{cos_sin_exp}
\cos^2\theta\cdot R(e_1,Je_1,e_1,Je_1)+\sin 2\theta\cdot  R(e_1,e_2,e_1,Je_1)+\sin^2\theta\cdot R(e_1,e_2,e_1,e_2)=0.
\end{equation}
By considering the  linear transformation of $T_xM$ that 
fixes each $u_i$ for $i\neq 2$, but sends $u_2$ to $-u_2$, which is an element in $\mathrm{U}(n/2)$, we deduce by the strong maximum principle \cite[Proposition 9]{BrendleSchoen2008} that 
\eqref{cos_sin_exp} holds also with the sign of the middle term reversed, and adding both gives
$$\cos^2\theta\cdot R(e_1,Je_1,e_1,Je_1)+\sin^2\theta\cdot R(e_1,e_2,e_1,e_2)=0$$
and hence 
$R(e_1,e_2,e_1,e_2)=0$
since $\mathrm{K}\geq 0$. Similarly, since $u_2\mapsto \sqrt{-1}u_2$ is an element of $\mathrm{U}(n/2)$, we deduce that $R(e_1,Je_2,e_1,Je_2)=0$.
Therefore by the Bianchi identity and K\"ahler symmetries we have
\begin{equation*}
\begin{aligned}
R(u_1,\bar u_1,u_2,\bar u_2) &= - R(e_1,Je_1,e_2,Je_2)\\
&=R(e_1,e_2,Je_2,Je_1)+R(e_1,Je_2,Je_1,e_2)\\
&=-R(e_1,e_2,e_1,e_2)-R(e_1,Je_2,e_1,Je_2)\\
&=0.
\end{aligned}
\end{equation*}
Using once again the invariance under $\mathrm{U}(n/2)$ coming from the strong maximum principle, we deduce
that the orthogonal bisectional curvature of $g(t_0)$ vanishes.
This then contradicts the positivity of the scalar curvature 
because the scalar curvature can be written as an average of orthogonal bisectional curvatures. For example, according to 
a formula of Berger \cite[(2.1)]{NiZheng2018}, one can write the scalar curvature in terms of the orthogonal Ricci curvature $ \Ric^\perp$,
which vanishes when the orthogonal bisectional curvature vanishes.
This completes the proof of the claim.
\end{proof}

Since we have $\mathrm{K}(g(t))>0$ and $|R_{g(t)}|\leq a_0 t^{-1}$ for all $t>0$, a result of Gromoll-Meyer 
\cite[Theorem B.65]{CK04}
implies that there exists $c_0(n,a_0)>0$ such that $\mathrm{inj}(g(t))\geq c_0\sqrt{t}$ for all $t>0$. 
We claim that  the asymptotic volume ratio is positive, i.e. for any $x\in M$, 
$$\mathrm{AVR}(g_0):=
\lim_{r\to\infty}\frac{\mathrm{Vol}_{g_0}\left(B_{g_0}(x,r) \right)}{\omega_n r^n}
>0,$$ 
where $\omega_n$ is the volume of the unit ball in $\R^n$.
For $t_0>0$ sufficiently large,  Lemma~\ref{l-balls} implies that $$B_{g(t_0)}(x,c_0\sqrt{t_0})\subset B_{g_0}(x,c_1\sqrt{t_0})$$ 
for some $c_1(n,a_0)>0$. Additionally, $g(t_0)\leq g_0$ 
because $\Ric_{g(t)}\geq 0$, so
\begin{equation}
\begin{split}
\mathrm{Vol}_{g_0}\left(B_{g_0}(x,c_1\sqrt{t_0}) \right) &\geq \mathrm{Vol}_{g_0}\left(B_{g(t_0)}(x,c_0\sqrt{t_0}) \right)\\
&\geq \mathrm{Vol}_{g(t_0)}\left(B_{g(t_0)}(x,c_0\sqrt{t_0}) \right)\\
&\geq c_2(n,a_0) t_0^{n/2}
\end{split}
\end{equation}
where the last inequality follows by G\"unther's theorem 
(cf. \cite[Theorem 3.101(ii)]{GHL})
because $\mathrm{inj}(g(t_0))\geq c_0\sqrt{t_0}$ and 
$|R_{g(t_0)}|\leq a_0 t_0^{-1}$.

Since $t_0$ is arbitrarily large,  we see that $\mathrm{AVR}(g_0)>0$. It is well-known that the asymptotic volume ratio $\mathrm{AVR}(g)$ is preserved under Ricci flow with $\Ric_{g(t)}\geq 0$ and $|R_{g(t)}|\leq a_0 t^{-1}$; for instance see the proof of \cite[Theorem 7]{TakumiYokota-GD}.
Therefore, $\mathrm{AVR}(g(t))=\mathrm{AVR}(g_0)>0$ for all $t>0$.

On the other hand, since $R_{g(t)}\in \CPICt$, $g(t)$ satisfies the Hamilton's differential Harnack inequality for all $t\geq 0$ by \cite{Brendle2009}. Together with the fact that $t\cdot \mathrm{scal}_{g(t)}$ is uniformly bounded for all $t>0$, we can use an argument 
of Schulze-Simon \cite[Theorem 1.2]{SchulzeSimon2013} to deduce that $(M,g_i(t),p)$ where $g_i(t)=i^{-2}g(i^2 t)$ 
converges sub-sequentially in the Cheeger-Gromov sense to $(M_\infty,g_\infty(t),p_\infty)$ which is an expanding gradient soliton with the same asymptotic volume ratio as $g_0$, see also \cite[Proposition 12]{Brendle2009}. By  (a), the Ricci curvature of $g_\infty(t)$ is pinched and hence $g_\infty(t)$ is flat for all $t>0$ using \cite[Corollary 3.1]{Ni2005}. This implies $\mathrm{AVR}(g_\infty(t))=1$ for all $t>0$ which forces $\mathrm{AVR}(g_0)=1$ and hence $R_{g_0}\equiv 0$ by the rigidity of volume comparison. This contradicts  the non-flatness we have assumed at $p$. This completes the proof.
%
%
%\red 
%
%
%
\end{proof}

It remains to prove the corollary combining our main PIC1 Pinching Theorem \ref{main-thm-updated} with earlier work.
\begin{proof}[{Proof of Corollary \ref{main-thm-cor}}]
Because $(M,g_0)$ is assumed  to be not everywhere flat, by the PIC1 Pinching Theorem \ref{main-thm-updated} it must be compact.
Without loss of generality we may reduce $\e_0$ so that 
$\e_0\in (0,\frac{1}{n(n-1)})$. 
By  Lemma \ref{converse_lemma}, applied with $\cone=\CPICo$,
the pinching hypothesis implies that there exists $b>0$ such that 
$R_{g_0}\in \ell_{b}(\CPICo)$.
But $\ell_{b}(\CPICo)$ is invariant under the Hamilton ODE by \cite[Proposition 3.2]{BohmWilking2008}, 
so if we start the Ricci flow (which is always possible on a closed manifold) then for later times $t>0$ we still have $R_{g(t)}\in \ell_{b}(\CPICo)$, and thus by Lemma 
\ref{lma:linear-algebra} we have 
$$R_{g(t)}-\delta\,\mathrm{scal}(R_{g(t)})\cdot I\in \CPICo$$
for some $\delta>0$.
Because $g_0$ is assumed not to be flat, for $t>0$ we have $\mathrm{scal}(R_{g(t)})>0$ by the maximum principle, and thus $g(t)$ is strictly PIC1.
The result then follows from Brendle's PIC1 version of the sphere theorem \cite[Theorem 3]{brendlePIC1}.
\end{proof}

%%%%%%%%%%%%%%%%%%%%%%%%%%%%%%%%%

%\newpage

\appendix

\section{A geometric interpretation of $\check{C} (s)$}
\label{geometric_interpretation_appendix}

In Section \ref{pinching_cones_sect}, for each $s>0$, $\check{C} (s)$ was defined 
to be the cone of all algebraic curvature tensors $R\in \mathcal{C}_B(\mathbb{R}^n)$ satisfying
\begin{equation*}
\begin{split}
& R_{1313}+\lambda^2 R_{1414}+ \mu^2 R_{2323}+\lambda^2\mu^2 R_{2424}-2\lambda\mu R_{1234}\\
&\qquad +\frac1s(1-\lambda^2)(1-\mu^2)\cdot \mathrm{scal}(R)\geq 0
\end{split}
\end{equation*}
for all orthonormal four-frames $\{e_1,e_2,e_3,e_4\}\subset \mathbb{R}^n$ and $\lambda,\mu\in [0,1]$.  
In this appendix we interpret this definition in terms of complex sectional curvatures and use the insight in order to give a quantitative relationship between $\CPICo$ and 
$\check{C} (s)$.

As in Section \ref{intro_sect}, given an algebraic curvature tensor 
$R\in \mathcal{C}_B(\mathbb{R}^n)$, we can extend by complex linearity and consider complex sectional curvatures of two-complex-dimensional subspaces $\Sigma\subset \C^n$.

Every section $\Sigma$ contains an isotropic vector $v\in \Sigma$, i.e. so that 
$(v,v)=0$,
where $(\cdot,\cdot)$ is the complex linear extension of the standard inner product on $\R^n$.
To see this directly, pick any basis $\tilde v, \tilde w$ of $\Sigma$.
Either $\tilde w$ is isotropic, in which case we set $v=\tilde w$, or we can solve
the quadratic polynomial $(\tilde v + z\tilde w,\tilde v + z\tilde w)=0$, and then set  $v=\tilde v + z\tilde w$.
By scaling, we may assume that $|v|=1$.

Because $v$ is isotropic and of unit length, we can pick orthonormal $e_1,e_2\in \R^n$ so that $v=\frac{1}{\sqrt{2}}(e_1+ie_2)$.
Note that $\bar v=\frac{1}{\sqrt{2}}(e_1-ie_2)$ is orthogonal to $v$, e.g. $\langle v, \bar{v}\rangle=(v,v)=0$.

As stated earlier, we refer to $\Sigma$ as a PIC1 section if $\bar v$ is orthogonal to $\Sigma$. More generally $\bar v$ will lie at an angle $\theta\in [0,\pi/2]$ to $\Sigma$ and defining $\alpha=\cos\theta\in [0,1]$
we can define a unit vector $w\in \Sigma$ orthogonal to $v$ by writing
$$w=\alpha \bar{v} +\sqrt{1-\alpha^2} u$$ 
for some unit $u\in \C^n$ that is orthogonal to both $v$ and $\bar v$.
Because $(v,w)=\alpha$, we see that $\alpha=0$ precisely when 
$\Sigma$ is a PIC1 section. 
Indeed, $\alpha$ can be viewed as a measure of how far $\Sigma$ is from being a PIC1 section that depends only on $\Sigma$ (and 
not on our choices of vectors above) with $\alpha=1$ being precisely the 
case that $\Sigma $ is a real section (that is,
the complexification of a two-real-dimensional plane in $\R^n$, or equivalently a section that contains a vector $v$ such that $\{v,\bar v\}$ gives an orthonormal basis for $\Sigma$). We thus view $\alpha$ as a function on the set of sections.

A calculation then reveals that an alternative characterisation of 
$\check C(s)$ is the cone of all curvature tensors 
$R\in \mathcal{C}_B(\mathbb{R}^n)$ whose complex sectional curvature
satisfies
\begin{equation}
\begin{aligned}
\label{Kquant}
K^\C(\Sigma)+\frac{\alpha(\Sigma)^2}{s}\scal(R)\geq 0.
\end{aligned}
\end{equation}
We see very clearly the inclusions $\CPICt\subset  \check C(s)\subset \CPICo$
mentioned in Section \ref{pinching_cones_sect}.

It might be initially a little surprising that a curvature tensor in 
\CPICo, which is assumed only to have non-negative complex sectional curvature for very special sections (the PIC1 sections) does, in fact, enjoy a lower bound for \emph{all} complex sectional curvatures.
\begin{lma}
\label{Kalpha_lower}
Suppose $R\in \CPICo\subset \mathcal{C}_B(\mathbb{R}^n)$. Then for all complex sections $\Sigma\subset\C^n$ we have
$$K^\C(\Sigma)\geq -C(n)\alpha(\Sigma)\scal(R).$$
\end{lma}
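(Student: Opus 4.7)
The plan is to use the decomposition of sections developed in the appendix: given $\Sigma$ with isotropic unit vector $v$, choose the basis $\{v,w\}$ of $\Sigma$ with $w=\alpha\bar v+\beta u$, where $\beta=\sqrt{1-\alpha^2}$ and $u$ is a unit vector orthogonal (in the Hermitian sense) to both $v$ and $\bar v$. The section $\Sigma_0:=\mathrm{span}(v,u)$ is then automatically PIC1, because $\bar v$ is orthogonal to both $v$ (as $(v,v)=0$) and $u$ (by choice of $u$), so $\bar v\perp \Sigma_0$.

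The key computation is to expand $K^\C(\Sigma)=R(v,w,\bar v,\bar w)$ in terms of the basis vectors. Since $\bar w=\alpha v+\beta\bar u$, multilinearity gives
\begin{equation*}
\begin{aligned}
K^\C(\Sigma)=\beta^{2}R(v,u,\bar v,\bar u)+\alpha^{2}R(v,\bar v,\bar v,v)+\alpha\beta\bigl[R(v,\bar v,\bar v,\bar u)+R(v,u,\bar v,v)\bigr].
\end{aligned}
\end{equation*}
The first term equals $\beta^{2}K^\C(\Sigma_{0})$, which is non-negative because $R\in\CPICo$ and $\Sigma_{0}$ is a PIC1 section. Each of the remaining three terms is a component of $R$ (in an orthonormal complex frame) multiplied by either $\alpha^{2}$ or $\alpha\beta\leq\alpha$, so is bounded in absolute value by $C(n)\alpha|R|$. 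Consequently
\begin{equation*}
K^\C(\Sigma)\geq -C(n)\alpha(\Sigma)\,|R|.
\end{equation*}

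To convert $|R|$ into $\mathrm{scal}(R)$, I would invoke the fact that $\CPICo$ is a closed convex cone in $\mathcal{C}_B(\mathbb{R}^n)$ whose nonzero elements have strictly positive scalar curvature (this is exactly the content of the paper's Lemma \ref{lower_R_in_PIC1}, already used in the proof of Lemma \ref{converse_lemma}). A standard compactness argument on the intersection of $\CPICo$ with the unit sphere then yields a dimensional constant $c(n)$ with $|R|\leq c(n)\,\mathrm{scal}(R)$ for every $R\in\CPICo$. Substituting into the previous inequality and absorbing $c(n)$ into $C(n)$ produces the desired bound $K^\C(\Sigma)\geq -C(n)\alpha(\Sigma)\,\mathrm{scal}(R)$.

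The main conceptual step is the decomposition $w=\alpha\bar v+\beta u$, which already appears in the appendix, so no real obstacle remains: the cross terms behave linearly in $\alpha$ because the "large" quadratic contribution $\beta^{2}R(v,u,\bar v,\bar u)$ is exactly the non-negative PIC1 term, and the compactness bound $|R|\lesssim\mathrm{scal}(R)$ on $\CPICo$ is a standard by-product of $\CPICo$ being a pointed convex cone whose nonzero members have positive scalar curvature.
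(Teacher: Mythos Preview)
Your proof is correct and follows essentially the same route as the paper's: the paper uses the identical decomposition $w=\alpha\bar v+\sqrt{1-\alpha^2}\,u$, expands $R(v,w,\bar v,\bar w)$ into the same four terms, drops the non-negative PIC1 term $(1-\alpha^2)R(v,u,\bar v,\bar u)$, and bounds the remaining three terms using Lemma~\ref{lower_R_in_PIC1} to replace $|R|$ by $\mathrm{scal}(R)$. The only cosmetic difference is that the paper invokes Lemma~\ref{lower_R_in_PIC1} directly rather than phrasing it as a compactness argument on the unit sphere of the cone.
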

Before we prove Lemma \ref{Kalpha_lower}, we record  the following more basic control
that is equivalent to every $R\in \CPICo\setminus \{0\}$
having positive scalar curvature.

\begin{lma}
\label{lower_R_in_PIC1}
If $R\in \CPICo$, then 
$$|R|\leq C(n)\scal(R).$$
\end{lma}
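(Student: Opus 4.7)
The plan is to reduce the inequality to a compactness-plus-coercivity statement. Since $\CPICo$ is a closed cone in the finite-dimensional space $\mathcal{C}_B(\mathbb{R}^n)$, the slice $K:=\CPICo\cap\{R:|R|=1\}$ is compact, and the linear functional $\scal$ is continuous and non-negative on $\CPICo$ (every $R\in\CPICo$ has $\Ric(R)\geq 0$, a fact already recorded in the introduction and easily derived from the $\lambda=0$ case of the PIC1 inequality by a pairing argument). If I can show that $\scal$ is in fact strictly positive on $K$, then by compactness it attains a positive minimum $c(n)>0$ there, and homogeneity of the cone gives $\scal(R)\geq c(n)|R|$ for every $R\in\CPICo$, which is exactly the claim.

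So the heart of the matter is to show that $R\in\CPICo$ together with $\scal(R)=0$ forces $R=0$. Since $\Ric(R)$ is non-negative and has trace zero, we immediately get $\Ric(R)=0$. To deduce $R=0$ from this, I would exploit the $\lambda=0$ case of the PIC1 inequality: for every orthonormal four-frame $\{e_1,e_2,e_3,e_4\}\subset\mathbb{R}^n$,
\[
R_{1313}+R_{2323}\geq 0.
\]
Fix a unit vector $v\in\mathbb{R}^n$, an orthonormal basis $\{u_1,\ldots,u_{n-1}\}$ of $v^\perp$, and put $a_i:=R(u_i,v,u_i,v)$. For $n\geq 4$ any two $u_i,u_j$ together with $v$ can be completed to an orthonormal four-frame, so the PIC1 inequality yields $a_i+a_j\geq 0$ for every $i\neq j$. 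Meanwhile $\sum_i a_i=\Ric(v,v)=0$. An elementary pigeonhole step rules out any strictly negative entry: if $a_k<0$, then $a_i\geq -a_k>0$ for each $i\neq k$, whence $\sum_i a_i\geq (n-3)(-a_k)>0$, contradicting the trace condition. Hence every $a_i=0$; since $v$ was arbitrary, every real sectional curvature of $R$ vanishes, and because sectional curvatures determine the curvature tensor (via polarization and the first Bianchi identity), $R=0$.

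I do not anticipate any serious obstacle. The only points worth a moment's care are the availability of a fourth vector to complete the orthonormal four-frame (valid for $n\geq 4$, which is the standing convention of Section~\ref{pinching_cones_sect}) and the classical fact that sectional curvatures determine the curvature tensor. The case $n=3$, should it be wanted, is immediate from the coincidence $\CPICo=\{R:\Ric(R)\geq 0\}$ together with the fact that in three dimensions the full tensor is algebraically determined by $\Ric$.
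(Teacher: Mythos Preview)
Your proof is correct. You take a compactness approach: since $\CPICo$ is a closed cone in a finite-dimensional space, it suffices to show $\scal$ is strictly positive on the compact slice $\CPICo\cap\{|R|=1\}$, i.e.\ to prove the rigidity statement that $R\in\CPICo$ with $\scal(R)=0$ forces $R=0$; you then establish this via a clean pigeonhole argument on the sectional curvatures $a_i=R(u_i,v,u_i,v)$. The paper instead proceeds constructively: it observes that each sum $R_{ijij}+R_{ikik}$ (for distinct $i,j,k$) is non-negative, being twice the complex sectional curvature of a PIC1 section, and that over all ordered triples these sums total $2(n-2)\scal(R)$, so each is individually at most $2(n-2)\scal(R)$; a three-term linear combination then pins each $R_{ijij}$ between $-(n-2)\scal(R)$ and $2(n-2)\scal(R)$. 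Both arguments rest on the same elementary inequality $R_{ijij}+R_{ikik}\geq 0$ coming from the $\lambda=0$ case of PIC1, but the paper's route yields an explicit dimensional constant, while yours is non-constructive yet conceptually streamlined.
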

\begin{proof}
Take any orthonormal basis $\{e_i\}$. Then 
$$\scal(R)=\sum_{i\neq j} R(e_i,e_j,e_i,e_j)
=\frac1{2(n-2)}\sum_{i,j,k\text{ distinct}}[R(e_i,e_j,e_i,e_j)+R(e_i,e_k,e_i,e_k)].
$$
Each term in square brackets can be written
$$R(e_i,e_j,e_i,e_j)+R(e_i,e_k,e_i,e_k)=R(e_i,e_j+ie_k,e_i,e_j-ie_k)=2K^\C(\Sigma)\geq 0,$$
where $\Sigma$ is the PIC1 section spanned by $e_i$ and $\frac{1}{\sqrt{2}}(e_j+ie_k)$.
Because all these terms are non-negative, and sum to $2(n-2)\scal(R)$, we have
$$0\leq R(e_i,e_j,e_i,e_j)+R(e_i,e_k,e_i,e_k)\leq 2(n-2)\scal(R).$$
To control the sectional curvature of a plane spanned by $e_i,e_j$ for arbitrary $i\neq j$, we pick any $k\neq i,j$ and compute
\begin{equation}
\begin{aligned}
2 R(e_i,e_j,e_i,e_j) &=
[R(e_i,e_j,e_i,e_j)+R(e_i,e_k,e_i,e_k)]\\
&\quad +[R(e_j,e_i,e_j,e_i)+R(e_j,e_k,e_j,e_k)]\\
&\quad -[R(e_k,e_j,e_k,e_j)+R(e_k,e_i,e_k,e_i)].
\end{aligned}
\end{equation}
Therefore
$$-(n-2)\scal(R)\leq R(e_i,e_j,e_i,e_j) \leq 2(n-2)\scal(R).$$
\end{proof}

\begin{proof}[Proof of Lemma \ref{Kalpha_lower}]
We are interested in the complex sectional curvature corresponding to $\Sigma$, which is spanned by the orthonormal basis $\{v,w\}$ of the type considered earlier.
We compute
\begin{equation*}
\begin{aligned}
K^\C(\Sigma)&=
R(v,w,\bar{v},\bar{w})\\
&=R\left(v,\alpha \bar{v} +\sqrt{1-\alpha^2}\, u,\bar{v},\alpha v +\sqrt{1-\alpha^2}\, \bar{u} \right)\\
&= \alpha^2 R(v,\bar{v},\bar{v},v)+
(1-\alpha^2) R(v,u,\bar{v},\bar{u})\\
&\quad +\alpha\sqrt{1-\alpha^2} R(v,\bar{v}, \bar{v},\bar{u}) 
+ {\alpha}\sqrt{1-\alpha^2} R(v,u,\bar{v},v)\\
&\geq (1-\alpha^2) R(v,u,\bar{v},\bar{u}) - C(n)\alpha\,\scal(R),
\end{aligned}
\end{equation*}
where we have used Lemma \ref{lower_R_in_PIC1}.

Now note that the section spanned by $v$ and $u$ is a PIC1 section because $\bar v$ is orthogonal to both $v$ (because $v$ is isotropic) and $u$ (by construction).
Therefore $R(v,u,\bar{v},\bar{u})\geq 0$ because $R\in \CPICo$.

We conclude that 
$$R(v,w,\bar{v},\bar{w})\geq - C(n)\alpha\, \scal(R),$$
as required.
\end{proof}

\begin{cor}
\label{PIC1_to_checkC}
For every $n\geq 4$ and $\e>0$ there exists $s_0>0$ such that for every $R\in\CPICo$ we have
$$R+\e\,\scal(R)\cdot I \in \check C(s_0).$$
\end{cor}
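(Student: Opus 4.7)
The plan is to use the characterization of $\check C(s_0)$ given by \eqref{Kquant}, namely that membership of $\check C(s_0)$ is equivalent to the complex-sectional-curvature inequality
$$K^\C(\Sigma)+\frac{\alpha(\Sigma)^2}{s_0}\scal(R)\geq 0$$
for every complex two-plane $\Sigma\subset\C^n$. Combined with the lower bound on complex sectional curvatures of elements of $\CPICo$ provided by Lemma~\ref{Kalpha_lower}, the whole proof becomes a matter of choosing $s_0$ small enough to dominate a linear-in-$\alpha$ error by a quadratic-in-$\alpha$ term.

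Concretely, I would first record the complex sectional curvature of $I$. A direct expansion using $I_{ijkl}=\delta_{ik}\delta_{jl}-\delta_{il}\delta_{jk}$ shows that for any Hermitian-orthonormal pair $v,w\in\C^n$,
$$I(v,w,\bar v,\bar w)=\langle v,\bar v\rangle\langle w,\bar w\rangle-\langle v,\bar w\rangle\langle w,\bar v\rangle=1,$$
so $K^\C_I(\Sigma)\equiv 1$ on every section. Together with $\scal(I)=n(n-1)$, this gives, for $\tilde R:=R+\e\,\scal(R)\cdot I$,
$$K^\C_{\tilde R}(\Sigma)=K^\C_R(\Sigma)+\e\,\scal(R),\qquad \scal(\tilde R)=(1+\e n(n-1))\scal(R).$$

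We may assume $\scal(R)>0$, since by Lemma~\ref{lower_R_in_PIC1} the case $\scal(R)=0$ forces $R=0$ and the conclusion is trivial. Dividing through by $\scal(R)$ and writing $\alpha=\alpha(\Sigma)\in[0,1]$, the required inequality
$$K^\C_{\tilde R}(\Sigma)+\frac{\alpha^2}{s_0}\scal(\tilde R)\geq 0$$
reduces, via Lemma~\ref{Kalpha_lower}, to
$$\frac{1+\e n(n-1)}{s_0}\,\alpha^2-C(n)\,\alpha+\e\;\geq\; 0\qquad\text{for all }\alpha\in[0,1].$$

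The final step is then to choose $s_0$ so this quadratic in $\alpha$ is everywhere non-negative on $\R$. Its discriminant is $C(n)^2-\tfrac{4\e(1+\e n(n-1))}{s_0}$, so any
$$s_0\;\leq\;\frac{4\e\,(1+\e n(n-1))}{C(n)^2}$$
works, with $C(n)$ the constant from Lemma~\ref{Kalpha_lower}. There is no real obstacle here: the content has been done in Lemma~\ref{Kalpha_lower} (and the geometric reformulation of $\check C(s)$ in the appendix), and the corollary just packages these together after a short one-variable quadratic estimate.
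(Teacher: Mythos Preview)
Your proof is correct and follows essentially the same route as the paper's: both invoke Lemma~\ref{Kalpha_lower} and the reformulation \eqref{Kquant}, then absorb the linear-in-$\alpha$ error into a quadratic-in-$\alpha$ term. The paper does the last step via Young's inequality (yielding $s_0=1/c_1(n,\e)$), whereas you use the discriminant of the resulting quadratic; these are equivalent, and your explicit computation $K^\C_I(\Sigma)\equiv 1$ (which the paper uses implicitly) is fine provided the pairing in your intermediate formula is read as the complex-bilinear one.
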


\begin{proof}
Take an arbitrary section $\Sigma\subset \C^n$. Then by Lemma \ref{Kalpha_lower}
we have
\begin{equation*}
\begin{aligned}
K^\C(\Sigma) &\geq -C(n)\alpha(\Sigma)\scal(R)\\
&\geq -\e\, \scal(R)-c_1(n,\e)\alpha(\Sigma)^2\scal(R),\\
\end{aligned}
\end{equation*}
for some $c_1(n,\e)>0$, by Young's inequality.
If we write $\hat R:=R+\e\,\scal(R)\cdot I $, then 
$\scal(\hat R)\geq \scal(R)$, so we find that
$$K_{\hat R}^\C(\Sigma)+c_1(n,\e)\alpha(\Sigma)^2\scal(\hat R)\geq 0,$$
and we can choose $s_0=\frac{1}{c_1(n,\e)}$ to deduce that $\hat R\in \check C(s_0)$.
\end{proof}

%{\red 
%CHECK IF WE NEED THE CC BY  DECLARATION.}

\end{document}